\newtheorem{thm}[equation]{Theorem}
\newtheorem{cor}[equation]{Corollary}
\newtheorem{lem}[equation]{Lemma}
\newtheorem{prop}[equation]{Proposition}
\newtheorem{definition}[equation]{Definition}
\numberwithin{equation}{section}
\renewcommand\a{\alpha}
\renewcommand\b{\beta}
\newcommand\g{\gamma}
\renewcommand\d{\delta}
\renewcommand\l{\lambda}
\newcommand\f{\frac}
\newcommand{\Z}{{\mathbb{Z}}}
\newcommand{\R}{{\mathbb{R}}}
\newcommand{\C}{{\mathbb{C}}}
\newcommand\re{\text{Re~}}
\newcommand{\Inv}{\operatorname{Inv}}
\renewcommand\i{^{-1}}
\newcommand{\sgn}{\operatorname{sgn}}
\newcommand{\dn}{\,dn}
\newcommand{\du}{\,du}
\newcommand{\pihat}{\widehat{\pi}}
\newcommand{\phihat}{\widehat{\varphi}}
\newcommand{\what}{\widehat{w}}
\newcommand{\ahat}{\widehat{a}}
\newcommand{\bhat}{\widehat{b}}
\newcommand{\uhat}{\widehat{u}}
\newcommand{\nhat}{\widehat{n}}
\newcommand{\pitilde}{\widetilde{\pi}}
\newcommand{\phitilde}{\widetilde{\varphi}}
\newcommand{\wtilde}{\widetilde{w}}
\newcommand{\ntilde}{\widetilde{n}}
\newcommand{\utilde}{\widetilde{u}}
\newcommand{\atilde}{\widetilde{a}}
\newcommand{\btilde}{\widetilde{b}}
\newcommand{\wntilde}{\widetilde{wn}}
\newcommand{\wutilde}{\widetilde{wu}}
\newcommand{\gobble}[1]{}
  \newcommand{\rangeref}[2]{%
    \ref{#1}--\afterassignment\gobble\fam 0\ref{#2}%
  }
\newenvironment{smallarray}[1]
 {\null\,\vcenter\bgroup\scriptsize
  \arraycolsep=.13885em
  \hbox\bgroup$\array{@{}#1@{}}}
 {\endarray$\egroup\egroup\,\null}
\newcommand{\GL}{\text{GL}}
\newcommand{\npdj}{n_{\pi(d),j}}
\newcommand{\nprj}{n_{\pi(r),j}}
\newcommand{\inR}{\int_{-\infty}^{\infty}}
\newcommand{\dx}{\,dx}
\newcommand{\eqand}{\quad\text{and}\quad}
\newcommand{\beq}{\begin{equation}}
\newcommand{\eeq}{\end{equation}}
\title{Schubert cells and Whittaker functionals for $\GL(n,\R)$ part II: Existence via integration by parts}
\author{Doyon Kim}
\date{\today}
\begin{document}

\begin{abstract}
We give a new proof of the existence of Whittaker functionals for principal series representation of $\GL(n,\R)$, utilizing the analytic theory of distributions. We realize Whittaker functionals as equivariant distributions on $\GL(n,\R)$, whose restriction to the open Schubert cell is unique up to a constant. Using a birational map on the Schubert cells, we show that the unique distribution on the open Schubert cell extends to a distribution on the entire space $\GL(n,\R)$. This technique gives a proof of the analytic continuation of Jacquet integrals via integration by parts. We briefly discuss an application of the method to the Bessel functions on $\GL(n,\R)$.
\end{abstract}

\maketitle

\tableofcontents

\section{Introduction}
Let $G$ be a real reductive Lie group and $(\pi,V)$ be an irreducible representation of $G$. Whittaker functionals are the continuous linear functionals $\tau : V^\infty \to \C$ on the space of smooth vectors which satisfy
\begin{equation}
\label{whitfunl}
    \tau\left(\pi(n)v\right) = \psi(n)\,\tau(v) \quad \text{for all} \quad n\in N, 
\end{equation}
where $N$ is the unipotent radical of a minimal parabolic subgroup of $G$ and $\psi : N \to \C^*$ is a nondegenerate character. When such a functional exists, it gives rise to a Whittaker function on $G$ by $g\mapsto \tau(\pi(g)v)$. The systematic study of Whittaker functions was initiated by Jacquet \cite{jacquet1967fonctions}. In his thesis \cite{jacquet1967fonctions}, he introduced integral representations of Whittaker functions on Chevalley groups and gave a local proof of their analytic continuation. Schiffmann \cite{schiffmann1971integrales} extended the result to any rank
one real groups. Piatetski-Shapiro \cite{piatetski1979multiplicity} and Shalika \cite{shalika1974multiplicity} proved the ``multiplicity one theorem", which asserts that the space of Whittaker functionals on irreducible representations of $\GL(r,\R)$ is at most one-dimensional. In \cite{kostant1978whittaker}, Kostant showed that the dimension of the space of Whittaker functionals for any principal series representation of a quasisplit linear Lie group is exactly one.
\par
In this paper, we focus on the group $\GL(n,\R)$. This paper is the second in a series of two papers that together give a new proof of the existence of Whittaker models on principal series representations of $\GL(r,\R)$ by reducing the analysis of Whittaker functionals to integration by parts. This series is based on the author's Ph.D. thesis \cite{kimWhit}, which originally presented this new proof. The proofs have been modified and improved in many places for clarity. \par
While the analytic continuation of Jacquet integrals for $\GL(n,\R)$ is a special case of the results mentioned above, and has been proved in different ways several times, for example in \cite{casselman2000bruhat} and \cite{goodman1980whittaker}, our new method has its own merit towards the applications to problems in analytic number theory. We directly analyze Jacquet integrals through a series of integral transforms and avoid the use of any algebraic machinery, making our method well-suited for analytic applications. We give one such example in Section~\ref{sec:Bessel}, where we discuss Bessel functions for $\GL(n,\R)$. It turns out that change of variables we use can be applied not only to Jacquet integrals, but also to the Bessel integrals for $\GL(n,\R)$.\par
The representation theoretic question regarding the existence of Whittaker models can be translated into a question of the existence of certain equivariant distributions. A Whittaker functional for a principal series representation $(\pi,V)$ of $G=\GL(r,\R)$ can be realized as a distribution vector $\tau$ satisfying the transformation law \eqref{whitfunl}, and the transformation law uniquely determines $\tau$ on the open Schubert cell. The existence and uniqueness of the Whittaker functional amounts to the assertion that $\tau$ admits a canonical extension to the whole group $G$, and that this extension becomes unique if it is required to transform under the action of $N$ according to the character $\psi$. Casselman, Hecht, and Mili\v{c}i\'{c} take this point of view in \cite{casselman2000bruhat}, where they establish the existence and uniqueness of Whittaker functionals. \par 
From this viewpoint, the Jacquet integral can be interpreted as a pairing of $\tau$ and a smooth function. We construct an extension of $\tau$ from the open Schubert cell to $G$ via a series of integral transforms and integration by parts. This uses the analytic technique developed by Miller and Schmid in \cite{miller2004distributions}, in particular, the notion of distributions vanishing to infinite order. After the transformation, the Jacquet integrals are simplified up to the point where we can prove their analytic continuation via integration by parts just like the $\Gamma$-function. \par 
This integration-by-parts method is used in \cite{miller2012archimedean} by Miller and Schmid to prove the analytic continuation of the exterior square $L$-functions. The implementation of this method for Whittaker functionals was originally formulated in their unpublished note \cite{miller2008unpublished}, where they posed the primary result of this series, Theorem~\ref{biratlthm}, as a conjecture. See \cite{miller2013mathematics} for an overview of the integration-by-parts method. A similar approach is used in \cite{kim2023infinitely} by the author to prove the infinitude of the critical zeros of $L$-functions with additive twists, where integration by parts is applied to the integral representations of $L$-functions to obtain their upper bounds. 
\subsection{Set-up and main results}
The set-up in this section closely follows \cite{miller2008unpublished}. Let $F$ be a field of characteristic $0$. Let $G=\GL(r,F)$, $B_{-}$ be its lower triangular Borel subgroup, and $N$ (resp. $N_{-}$) is the group of upper triangular (resp. lower triangular) unipotent matrices. The group $N$ can be identified with $F^d$ with $d=\f12 r(r-1)$ via the coordinate entries $n_{i,j}$, $1\le i<j\le r$ of $n\in N$. Let $\Omega$ be the Weyl group of $G$. The Weyl group consists of permutation matrices $w=(w_{i,j})$ with 
\begin{equation}\label{pidef}w_{i,j}=\d_{j=\pi(i)},\end{equation}
in bijection with permutations $\pi$ in the symmetric group $\mathfrak{S}_{r}$. The group $G$ has the Bruhat decomposition
$G=\bigcup_{w\in\Omega} NwB_{-}$, where the union is disjoint. For $w\in \Omega$, the $N$-orbit $NwB_{-}$ of $wB_{-}$ is called the \emph{Schubert cell attached to} $w$. We write $S_w=NwB_{-}$. The Schubert cell $NB_{-}$ attached to the identity element is also called the \emph{open Schubert cell}. For each $w\in\Omega$ we may decompose $N$ as the product of the subgroups
\begin{equation}\label{nws}
    \aligned
    N^w & = \{n\in N\,|\,w\i n w\in N_{-}\} =  N\cap wN_{-}w\i,\\
    N_w & = \{n\in N\,|\,w\i n w\in N\} =   N \cap wNw\i = 
    w N_{w\i}w\i.
    \endaligned
\end{equation}
It is clear that for each $w\in\Omega$, we have $N=N_wN^w  = N^wN_w$.
\begin{lem}[Lemma 1.4 of \cite{kim2024pt1}] \label{wNB} 
We have $S_w=N_w w B_{-}$, hence $S_w$ is a left $N_w$-orbit of $wB_{-}$. The Schubert cell $S_w$ is contained in the set $wNB_{-}$, which satisfies
\[wNB_{-}= N_w w N^{w\i}B_{-}.\]
\end{lem}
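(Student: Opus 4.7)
The plan is to prove each assertion by unwinding the definitions and using the two decompositions $N = N_w N^w = N^w N_w$ together with the conjugation identity $N_w = w N_{w^{-1}} w^{-1}$ from \eqref{nws}.

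For the first equality $S_w = N_w w B_{-}$, I would start from the definition $S_w = N w B_{-}$ and use the factorization $N = N_w N^w$ to write $S_w = N_w N^w w B_{-}$. It then suffices to show $N^w w B_{-} = w B_{-}$, or equivalently $w^{-1} N^w w \subseteq B_{-}$. This is immediate from the definition of $N^w$ in \eqref{nws}, which gives $w^{-1} N^w w \subseteq N_{-} \subseteq B_{-}$. Hence $S_w = N_w w B_{-}$, and since left multiplication by elements of $N_w$ permutes the coset $w B_{-}$, we see $S_w$ is the $N_w$-orbit of $w B_{-}$.

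For the containment $S_w \subseteq w N B_{-}$, I would use the form $S_w = N_w w B_{-}$ just established together with the identity $N_w = w N_{w^{-1}} w^{-1}$. This gives $N_w w = w N_{w^{-1}} \subseteq w N$, and hence $S_w = N_w w B_{-} \subseteq w N B_{-}$.

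Finally, for the formula $w N B_{-} = N_w w N^{w^{-1}} B_{-}$, the key is to decompose $N$ with respect to the Weyl element $w^{-1}$ rather than $w$: write $N = N_{w^{-1}} N^{w^{-1}}$. Then $w N B_{-} = w N_{w^{-1}} N^{w^{-1}} B_{-}$, and applying the identity $w N_{w^{-1}} = N_w w$ gives the desired expression. I expect no genuine obstacle here; the only point of care is to use the $w^{-1}$-decomposition of $N$ (not the $w$-decomposition) so that the conjugation relation $w N_{w^{-1}} w^{-1} = N_w$ can be applied cleanly, and to keep the order of factors straight when moving $w$ past elements of $N_{w^{-1}}$.
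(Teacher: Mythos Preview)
Your argument is correct and is essentially the standard one: it uses precisely the decompositions $N=N_wN^w=N^wN_w$ and the conjugation relation $N_w=wN_{w^{-1}}w^{-1}$ recorded in \eqref{nws}, applied in the right order. Note that the present paper does not give its own proof of this lemma---it is quoted as Lemma~1.4 of the companion paper \cite{kim2024pt1}---so there is nothing further to compare against here; your proof is exactly what one would expect that reference to contain.
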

Write $Y_w=wNB_{-}$. We can write $G$ as an overlapping union $\bigcup_{w\in\Omega}Y_w$. Each $Y_w$ has a large overlap with the open Schubert cell $NB_{-}$, and the overlap is dense and open in $Y_w$. Writing $g\in Y_w$ as $g=n_1\cdot w\cdot n\cdot b$ with $n_1 \in N_w$, $n\in N^{w\i}$, and $b\in B_{-}$, we see that $g$ lies in the open Schubert cell if and only if the matrix $wn\in wN^{w\i}$ admits a $UDL$ decomposition. \par
We shall assign a set of coordinates on $wN^{w\i}B_-$ that describes the entries of $wn$.
Let $e_{i,j}$ be an elementary matrix with $1$ in the $(i,j)$-entry and zeros in all other entries. Then a Weyl group element $w\in \Omega$ and the corresponding permutation $\pi\in \mathfrak{S}_{r}$ satisfy $w e_{i,j} w\i=e_{\pi\i(i),\pi\i(j)}$.
By \eqref{nws}, the entries $n_{i,j}$ of $n=(n_{i,j})\in N^{w\i}$ above the main diagonal are free if $\pi\i(i)>\pi\i(j)$, and zero otherwise. Let
\begin{equation}\label{invpiinv}
    \Inv(\pi\i)  =  \{ (i,j)\mid i<j, \ \pi\i(i) > \pi\i(j) \}
\end{equation}
denote the set of indices of free entries in $n\in N^{w\i}$. If a matrix $X$ has entries $x_{i,j}$, then the entries of $wX$ are
    $(wX)_{i,j} = \sum_{k=1}^n w_{i,k}x_{k,j} = x_{\pi(i),j}$. Accordingly, we denote the entries of $wn$ as
\begin{equation}\label{wnentries}
(wn)_{i,j}=n_{\pi(i),j}, \quad \text{or equivalently,} \quad n_{i,j}=(wn)_{\pi\i(i),j}.
\end{equation}
By (\ref{invpiinv}), the entries $n_{i,j}$ of $wn$ are free if $(i,j)\in \Inv(\pi\i)$. We call these entries the \emph{free variables of} $wn$. By (\ref{pidef}), $n_{i,j}=1$ if $i=j$. Finally, $n_{i,j}=0$ if $i>j$ or $\pi\i(i)<\pi\i(j)$. We denote the set of free variables as $V_w$, so
\begin{equation}\label{coords}
    V_w=\{n_{i,j}\,|\,(i,j)\in \Inv(\pi\i)\}.
\end{equation}
We often use the notation $\a=(i,j)$ for $(i,j)\in \Inv(\pi\i)$. \par 
We assign an order to the indices $(i,j)\in \Inv(\pi\i)$ using the lexicographic order of the pair $\left(-j,-\pi\i(i)\right)$. That is, 
\begin{equation}\label{ordering}
  (i_1,j_1)\succ (i_2,j_2) \quad \text{if}\quad j_1<j_2, \quad \text{or if} \quad j_1=j_2 \quad \text{and}\quad  \pi\i(i_1)<\pi\i(i_2).
\end{equation}
We extend the ordering in the obvious way to $n_{i,j}\in V_w$, that is, $n_{i_1,j_1}\succ n_{i_2,j_2}$ if $(i_1,j_1)\succ (i_2,j_2)$. With this ordering, $n_\a \succ n_\b$ if $n_\a$ is either to the left of $n_\b$ in the matrix $wn$, or if $n_\a$ and $n_\b$ are in the same column of $wn$ but $n_\a$ is above $n_\b$ in the matrix $wn$. Our main result concerns a birational map on $wN^{w\i}B_{-}$.
\begin{thm}\label{biratlthm}
For each $w\in\Omega$, there exists a birational map $R=R_w$
\[V_w \simeq F^{\, \abs{V_w}} \ \stackrel{R}{ \longrightarrow} \{u_{i,j}\mid (i,j)\in \Inv(\pi\i)\}\simeq F^{\,\abs{V_w}} \]
which satisfies the following properties:
\begin{enumerate}
\item[(i)] $R$ is smooth, of maximal rank, on $(F^\times)^{\abs{V_w}}$.
\item[(ii)] Let $wu$ denote the matrix obtained from $wn$ by replacing $n_\a\in V_w$ with $u_\a$. 
If each $n_{i,j}\neq 0$, then $wu$ decomposes as $x\cdot b$, where $x
\in N$ and $b\in B_{-}$. The $i$-th superdiagonal entry $x_{i,i+1}$ of $x$ is given by
$x_{i,i+1}=\sum\limits_{n_{\alpha}\in B_w(i+1)} \frac{1}{n_{\alpha}}$, where $B_w(i)\subseteq V_w$ with $2\leq i\leq r$ is a partition of $V_w$ defined in Definition~\ref{def:bwi}. Thus we have
$\sum\limits_{i=1}^{r-1} x_{i,i+1}=\sum\limits_{n_\alpha \in V_w} \frac{1}{n_\alpha}$.
\item[(iii)] The $i$-th diagonal entry $b_{i,i}$ of $b$ is given by
\begin{equation}\label{bdiag}
    b_{i,i}=(-1)^{i+\pi\i(i)}\cdot\frac{\prod_{(a,i)\in \Inv(\pi\i)}(-n_{a,i})}{\prod_{(i,b)\in \Inv(\pi\i)}n_{i,b}}.
\end{equation}
  \item[(iv)] The measure $\prod\limits_{(i,j)\in \Inv(\pi\i)}du_{i,j}$
    transforms to $ \prod\limits_{(i,j)\in \Inv(\pi\i)} \lvert n_{i,j}\rvert^{j-i-1}dn_{i,j}$.
  \item[(v)] For any pair of indices $\a$ and $\b$, $n_\a\f{\partial }{\partial n_\a} u_\b$ has the form \begin{equation}\label{ordform}
 \sum\limits_{i} c_i(\a)f_i, \quad \text{where}\quad c_i(\a)\in \Z\bigl[\{n_\d,n_\d\i\mid \d\prec\a\}\bigr]\quad\text{and}\quad f_i\in \Z\bigl[\{u_\g \mid n_\g \in V_w \} \bigr].
\end{equation}
\end{enumerate}
\end{thm}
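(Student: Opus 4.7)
\emph{Proof plan.} My approach is to construct the birational map $R_w$ explicitly from the $NB_{-}$-factorization of the matrix $wu$, and then verify the properties (i)--(v) by direct computation together with a combinatorial analysis of the elimination procedure. The starting observation is that, for generic $u$, the matrix $wu$ admits a unique decomposition $wu = x b$ with $x \in N$ and $b \in B_{-}$, which can be computed by a standard bottom-up algorithm: the last row of $b$ equals the last row of $wu$; then $x_{r-1,r}$ is chosen to zero out the $(r-1,r)$-entry of the running matrix, which produces row $r-1$ of $b$; and so on, working upward. In particular, each diagonal entry $b_{i,i}$ is the ratio of the bottom-right $(r-i+1)\times(r-i+1)$ minor of $wu$ to its bottom-right $(r-i)\times(r-i)$ minor, and each superdiagonal entry $x_{i,i+1}$ is a closely related ratio of minors.

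The change of variables $R_w$ is then defined by demanding that the diagonal formula (iii) hold identically as a function of $n$. I would proceed inductively in the ordering $\succ$: since $\succ$ processes the free variables column by column from left to right in $wn$, and within each column from top to bottom, each equation imposed by (iii) for a new column determines one $u_\a$ as a rational function of the $n_\d$ with $\d$ already processed. Smoothness on $(F^\times)^{\abs{V_w}}$ in (i) is then immediate from the explicit form of these rational functions, and (iii) holds by construction. For (iv), the Jacobian of $R_w$ is triangular with respect to $\succ$, and a direct computation of its diagonal entries yields exactly the factor $\abs{n_{i,j}}^{j-i-1}$. Property (v) is then obtained by differentiating the explicit inductive formulas and observing that, because $u_\b$ is built from $n_\d$ with $\d$ preceding the relevant step, $n_\a\,\partial u_\b/\partial n_\a$ has coefficients lying in $\Z[\{n_\d, n_\d\i \mid \d\prec\a\}]$.

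The combinatorial heart of the argument, and the principal obstacle, is property (ii): after the substitution $u = R_w(n)$, the cofactor expression for $x_{i,i+1}$ must collapse to the clean sum $\sum_{n_\a \in B_w(i+1)} 1/n_\a$ for the partition $\{B_w(i)\}$ defined in Definition~\ref{def:bwi}. Establishing this requires matching two a priori different combinatorial structures --- the free variables that survive in the cofactor ratios from the bottom-up elimination, and the partition $B_w$ --- and this is where the specific form of $R_w$ pays off. I would verify the matching by induction on $\ell(w)$, comparing the decompositions for $w$ and $w' = w s_k$ where $s_k$ is a simple reflection reducing the length: each additional inversion in $\Inv(\pi\i)$ introduces a single new free variable $n_\a$, which should be absorbed into exactly one of the $B_w(i)$'s, namely the one corresponding to the column of $wn$ containing $n_\a$. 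Once (ii) is established column by column, the global identity $\sum_{i=1}^{r-1} x_{i,i+1} = \sum_{n_\a\in V_w} 1/n_\a$ follows at once from the partition property.
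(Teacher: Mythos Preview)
Your proposal has a genuine gap at its very foundation: the map $R_w$ is never actually defined. You propose to determine $R_w$ ``by demanding that the diagonal formula (iii) hold identically,'' processing the free variables in the order $\succ$ so that ``each equation imposed by (iii) for a new column determines one $u_\a$.'' But (iii) provides only $r$ scalar equations (one per diagonal entry $b_{i,i}$), while there are $|V_w|=\ell(w)$ unknowns $u_\a$, and a single column of $wn$ can contain many free variables (for the long element, column $j$ contains $j-1$ of them). So the system you describe is vastly underdetermined, and nothing in your plan singles out the particular $R_w$ for which (ii), (iv), and (v) hold. The paper proceeds in the opposite direction: it writes down an explicit combinatorial formula for $R_w(n_\a)$ (Definition~\ref{def:Rformula}), namely a signed ratio of a path-sum polynomial $P(n_\a)$ over a product of row-products $\rho(1_\mu)$, and then verifies (i)--(v) for that specific map. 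Without an explicit candidate for $R_w$, there is nothing to check.

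A second issue is the inductive scheme. You propose induction on $\ell(w)$ via $w\mapsto ws_k$, but the matrices $wn$ and $(ws_k)n'$ live in spaces of the same size $r$ with different free-entry patterns, and it is not at all clear how the minor ratios defining $x_{i,i+1}$ and $b_{i,i}$ relate under this move; you offer no mechanism for comparing them. The paper instead inducts on $r$: it deletes either the bottom row (when it contains a $1$) or the rightmost column of $wn$, passing to a Weyl element $\widehat w$ or $\widetilde w$ in $\GL(r-1)$. The nontrivial content is that the paper's explicit $R_w$ is compatible with these deletions (Lemmas~\ref{indiceslem} and~\ref{RLwRwtilde}), which is what makes the induction go through; establishing this compatibility is the subject of the companion paper and is far from automatic. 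Finally, two smaller points: the Jacobian is triangular for the ordering $\sqsupset$ of \eqref{ordering2}, not for $\succ$ (they differ within columns, and under $\succ$ the matrix is not triangular); and your treatment of (v) is a one-sentence assertion, whereas in the paper it requires a delicate decomposition $R=R_L+R_1+R_2$ and an inductive argument occupying all of Section~\ref{sec:part5}.
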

This theorem implies, through an argument using the notion of distributions vanishing to infinite order, the existence of Whittaker functionals for principal series representations induced by characters on $B_{-}$. The implication is explained in more detail in Section~\ref{sec:whitdist}. \par
In \cite{miller2008unpublished}, Miller and Schmid obtained an explicit formula for $R_{w_l}$ that satisfies parts (i) through (v), where $w_l$ is the longest Weyl group element of $\GL(r)$ for any $r\geq 2$. Also, they provided an algorithm that computes $R_w$ for a given Weyl group element (see \cite[Appendix B]{kim2024pt1} for the algorithm) and conjectured that the formula obtained by the algorithm satisfies Theorem~\ref{biratlthm}. Using Mathematica, it is verified that their formula for $R_w$ for each Weyl group element $w$ of $\GL(r)$ with $2\leq r\leq 7$ satisfies parts (ii) and (iii). However, they were unable to prove general statements using the algorithmic approach. 
\par
The paper is organized as follows. In Section~\ref{sec:whitdist}, we introduce the notion of Whittaker distributions and explain the application of Theorem~\ref{biratlthm} to Jacquet integrals. Then we briefly discuss applications of the birational map $R_w$ to Bessel functions. In Section~\ref{sec:Rformula}, we provide the explicit formula for $R_w$. In the first paper \cite{kim2024pt1} of this series, several auxiliary results are established by analyzing the combinatorial properties of the map $R_w$. From Sections~\ref{sec:detsub} through~\ref{sec:parts23case2}, we prove parts (ii) and (iii) of Theorem~\ref{biratlthm} using the auxiliary results. After that, we prove the rest of Theorem~\ref{biratlthm}, also using the results from \cite{kim2024pt1}.

\section*{Acknowledgements}
This problem started from the unfinished work of Stephen D. Miller and Wilfried Schmid \cite{miller2008unpublished}. I would like to thank Stephen D. Miller for suggesting the problem and for his valuable advice. I also would like to thank Valentin Blomer for helpful suggestions, and Jack Buttcane for helpful correspondence in which he suggested a direction related to Bessel functions. This work was partially supported by ERC Advanced Grant 101054336 and Germany’s 
Excellence Strategy grant EXC-2047/1 - 390685813.

\section{Whittaker distributions on $\GL(r,\R)$}\label{sec:whitdist}
Let $G=\GL(r,\R)$. For $\lambda=(\lambda_1,\ldots,\lambda_r)\in \mathbb{C}^r$ and $\delta=(\delta_1,\ldots\delta_r)\in \left(\Z/2\Z\right)^r$, define the character $\chi_{\lambda,\delta}:B_{-}\to \mathbb{C}^*$ by the formula
$\chi_{\lambda,\delta}(b)=\prod_{j=1}^r \sgn(b_{j,j})^{\delta_j}\abs{b_{j,j}}^{\lambda_j}$. Let $V_{\lambda,\delta}$ denote the principal series representation induced from the character $\chi_{\lambda-\rho,\delta}$, where 
$\rho=(\frac{r-1}{2},\frac{r-3}{2},\ldots,\frac{1-r}{2})$ is the half sum of the positive roots. That is,
\[
    V_{\lambda,\delta}=\{f\in L^2_{\text{loc}}(G)  \mid  f(gb)=\chi^{-1}_{\lambda-\rho,\delta}(b)f(g) \text{ for } g\in G,\  b\in B_{-}\}.
\]
Let $V_{\lambda,\delta}^\infty$ be the subspace of smooth vectors consisting of $f\in C^{\infty}(G)$. We also define the distributional completion,
\[
    V_{\lambda,\delta}^{-\infty}=\{\sigma\in C^{-\infty}(G) \mid \sigma(gb)=\chi^{-1}_{\lambda-\rho,\delta}(b)\sigma(g) \text{ for } g\in G,\  b\in B_{-}\}.
\]
The group $G$ acts on each of the three spaces $V_{\lambda,\delta}$, $V_{\lambda,\delta}^\infty$, and $V_{\lambda,\delta}^{-\infty}$ via left translation. For $f_1\in  V_{\lambda,\delta}$ and $f_2\in  V_{-\lambda,\delta}$, we have the pairing defined by
\[
   \langle f_1, f_2 \rangle_{\lambda,\delta}=\int_N f_1(n)f_2(n)\,dn.
\]
The pairing extends on the left and restricts on the right to $V_{\lambda,\delta}^{-\infty} \times V_{-\lambda,\delta}^{\infty}$ since distributions are given locally by integration against derivatives of locally $L^2$ functions. Via this pairing, $V_{\lambda,\delta}^{-\infty}$ can be identified with the dual of $V_{-\lambda,\delta}^{\infty}$. See \cite{bate2013metaplectic} and \cite{miller2006automorphic} for more details on pairings of distributions.
\subsection{Whittaker distributions and Jacquet integrals}
Let $e(z)=e^{2\pi i z}$, and let $\psi(n)=e(n_{1,2}+n_{2,3}+\cdots+n_{r-1,r})$ be the standard Whittaker character on $N$. We say that $\tau\in V_{\lambda,\delta}^{-\infty}$ is a Whittaker distribution if $\tau$ satisfies the transformation law
\begin{equation}\label{Whitlaw}
\tau(ngb)=\psi(n)\tau(g)\chi^{-1}_{\lambda-\rho,\delta}(b)
 \end{equation}
for all $n\in N$, $g\in G$, and $b\in B_{-}$. Observe that the transformation law uniquely determines the restriction of $\tau$ to the open Schubert set $NB_{-}$ as a smooth function $\psi(n)\chi^{-1}_{\lambda-\rho,\delta}(b)$ up to a constant. If $\tau\in V_{\lambda,\delta}^{-\infty}$ is a Whittaker distribution, then for any $f\in V_{-\lambda,\delta}^\infty$ the pairing $\langle \tau,f\rangle_{\lambda,\delta}$ is called the Jacquet integral. The function $W_{\psi,f}$ defined by the pairing $W_{\psi,f}(g)=\langle \tau, \pi(g)f\rangle_{\lambda,\delta}$ is a Whittaker function, and conversely all smooth Whittaker functions for principal series arise this way. The Jacquet integral converges for certain values of the principal series parameter $\lambda$, and the existence of Whittaker functional amounts to showing that we can analytically continue the integral to the remaining values. For this, we use the notion of distributions vanishing to infinite order from \cite{miller2004distributions}.
\begin{definition}[Definition 2.4 of \cite{miller2004distributions}]\label{inforder}
For $M$ a smooth manifold and $S\subset M$ a closed submanifold, we say that a distribution $\sigma\in C^{-\infty}(M)$ vanishes to order $k\geq 0$ along $S$ if every point $p\in S$ has an open neighborhood $U_p$ in $M$ with the following property: there exist differential operators $D_j$ on $U_p$ which are tangential to $S\cap U_p$, measurable locally bounded functions $h_j\in L^\infty_{\text{loc}}(U_p)$, and smooth functions $f_j\in C^{\infty}(U_p)$ which vanish to order $k$ on $S$, all indexed by $1\leq j\leq N$, such that
$\sigma=\sum_{1\leq j\leq N} f_j D_j h_j$ on $U_p$. We say that $\sigma$ vanishes to infinite order along $S$ if it vanishes to order $k$ for every $k\geq 0$.
\end{definition}
By \cite[Lemma 2.8]{miller2004distributions}, a distribution $\sigma$ on $M$ that vanishes to infinite order along $S$ is uniquely determined by its restriction to $M\smallsetminus S$. Thus we have the following terminology:
\begin{definition}[Definition 2.16 of \cite{miller2004distributions}]
  A distribution $\tau$ defined on the complement $M\smallsetminus S$ of a closed submanifold $S\subset M$ has a canonical extension across $S$ if there exists a---necessarily unique---distribution $\sigma \in C^{-\infty}(M)$ that vanishes to infinite order along $S$ and agrees with $\tau$ on $M\smallsetminus S$.  
\end{definition}
The following lemma provides a prototypical example of a distribution vanishing to infinite order. The proof uses an application of the chain rule. This technique is also used in \cite{kim2023infinitely}, \cite{miller2004distributions}, and \cite{miller2008rankin}.
\begin{lem}\label{e1/x}
    The distribution $e(1/x)\in C^{-\infty}(\R\smallsetminus \{0\})$ has a canonical extension across $0$.
\end{lem}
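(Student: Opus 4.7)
The plan is to iterate the ODE $\f{d}{dx}e(1/x) = -\f{2\pi i}{x^2}\,e(1/x)$, which can be rewritten as $e(1/x) = \f{ix^2}{2\pi}\f{d}{dx}e(1/x)$. Let $L := \f{ix^2}{2\pi}\f{d}{dx}$. Since $Lu = u$ for $u(x)=e(1/x)$ on $\R\smallsetminus\{0\}$, we have $L^N u = u$ for every $N\ge 1$. Expanding the iterated operator as $L^N = \sum_{k=1}^N a_{N,k}(x)\,\f{d^k}{dx^k}$, the product rule yields the recursion $a_{N+1,k}(x) = \f{ix^2}{2\pi}\bigl(a_{N,k}'(x) + a_{N,k-1}(x)\bigr)$ with base case $a_{1,1}(x)=\f{ix^2}{2\pi}$; an easy induction then shows that the polynomial coefficient $a_{N,k}(x)$ vanishes at $x=0$ to order at least $N+k$, hence to order $\ge N+1$. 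This delivers the identity
\[e(1/x) \;=\; \sum_{k=1}^N a_{N,k}(x)\,\f{d^k}{dx^k}e(1/x) \qquad\text{on } \R\smallsetminus\{0\}.\]

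Next we promote this to an identity on $\R$. Since $\lvert e(1/x)\rvert = 1$, extending $e(1/x)$ by any value at $0$ defines $\widetilde u\in L^\infty_{\text{loc}}(\R)$ and hence a distribution on $\R$. The distribution derivatives $\f{d^k\widetilde u}{dx^k}$, multiplied by the smooth polynomials $a_{N,k}$, assemble into
\[\sigma_N \;:=\; \sum_{k=1}^N a_{N,k}(x)\,\f{d^k\widetilde u}{dx^k} \;\in\; C^{-\infty}(\R).\]
On a test function supported away from $0$, $\sigma_N$ evaluates to $\int L^N u\cdot\phi\,dx = \int e(1/x)\,\phi\,dx$, so $\sigma_N$ agrees with $e(1/x)$ on $\R\smallsetminus\{0\}$. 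Moreover, the formula defining $\sigma_N$ already exhibits it in the form $\sum f_j D_j h_j$ of Definition~\ref{inforder}, with $f_j = a_{N,k}$ vanishing at $0$ to order $\ge N+1$, $D_j = \f{d^k}{dx^k}$, and $h_j = \widetilde u \in L^\infty_{\text{loc}}$; consequently $\sigma_N$ vanishes along $\{0\}$ to order $N+1$.

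To conclude, we invoke the uniqueness assertion noted just after Definition~\ref{inforder} (\cite[Lemma 2.8]{miller2004distributions}): any distribution on $\R$ vanishing to positive order along $\{0\}$ is determined by its restriction to the complement. Applied to any pair $\sigma_N,\sigma_M$ with $N,M\ge 1$, this forces them to coincide, so $\sigma := \sigma_1 = \sigma_2 = \cdots$ is a single distribution which vanishes to order $N+1$ for every $N\ge 1$---that is, to infinite order---and which agrees with $e(1/x)$ on $\R\smallsetminus\{0\}$. This $\sigma$ is the desired canonical extension. The one substantive estimate is the order-of-vanishing bound $\mathrm{ord}_0 a_{N,k}\ge N+k$, but this is transparent from the recursion: the prefactor $\f{ix^2}{2\pi}$ contributes two orders of vanishing while differentiation drops at most one, so each iteration gains at least one order, as required.
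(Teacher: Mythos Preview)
Your proof is correct and follows the same approach as the paper: iterate the identity $e(1/x)=\frac{ix^2}{2\pi}\frac{d}{dx}e(1/x)$ (equivalently $-\frac{x^2}{2\pi i}\frac{d}{dx}$) to exhibit $e(1/x)$ as a sum $\sum a_{N,k}(x)\frac{d^k}{dx^k}e(1/x)$ with coefficients vanishing to high order at $0$. You are more explicit than the paper in two places---you spell out why the extensions $\sigma_N$ for different $N$ coincide (via the uniqueness lemma), and you are careful about interpreting the right-hand side as a distribution on all of $\R$---but the core idea is identical.
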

\begin{proof}
    For any $k\geq 0$, we have
    $e(1/x)=(-\frac{x^2}{2\pi i}\frac{d}{dx})^k e(1/x)$.
    Expanding the differential operator gives
\begin{equation}\label{tauchain2}
    e(1/x)=\sum_{j=1}^{k} q_j x^{j+k}\frac{d^j}{dx^j}e(1/x),
\end{equation} 
where $q_j$ are constants. The right hand side is an extension of $e(1/x)$ which vanishes to order $k$ at $0$. Since the choice of $k\geq 0$ is arbitrary, the lemma follows.
\end{proof}
Via a multivariable analogue of the technique used in the above lemma, our main theorem implies that a Whittaker distribution $\tau$ vanishes to infinite order along the complement $G\smallsetminus(NB_{-})$. In the remainder of this section, we apply the technique to Jacquet integrals to describe their analytic continuation.

\subsection{Jacquet integrals for $\GL(2)$}
We first use $G=\GL(2,\R)$ as a basic example, in which the underlying idea is easy to understand. The following example can also be found in \cite{casselman2000bruhat}. Let $\mu\in\C$ and let $\chi_{\mu}$ be the character on $B_{-}$ defined by $\chi_{\mu}\left(\begin{smallarray}{cc}
 a & 0 \\
 *& d \\
\end{smallarray}
\right)=\abs{\frac{a}{d}}^\mu$. Let $V_{\mu}$ be the principal series representation induced from $\chi_{\mu}$ and let $V^\infty_\mu$ be its subspace consisting of smooth functions. For simplicity, consider a spherical function $f\in V_\mu^{\infty}$ satisfying $f(kb_{-})=\chi_\mu(b_{-})$ for all $k\in K=SO(2,\R)$. A matrix in $N$ decomposes as
\[
 \left(
\begin{smallarray}{cc}
 1 & x \\
 0 & 1 \\
\end{smallarray}
\right)=   \Bigl(
\begin{smallarray}{cc}
 \sqrt{x^2+1}\i & x \sqrt{x^2+1}\i \\
 -x \sqrt{x^2+1}\i & \sqrt{x^2+1}\i \\
\end{smallarray}
\Bigr)\Bigl(
\begin{smallarray}{cc}
 \sqrt{x^2+1}\i & 0 \\
 x\sqrt{x^2+1}\i & \sqrt{x^2+1} \\
\end{smallarray}
\Bigr), 
\]
so the Jacquet integral of $f$ equals
\begin{equation}\label{GL2Jacquet}
   \langle \tau,f\rangle _{\mu}=\int_{-\infty}^\infty e^{2\pi i x}f\Bigl(\begin{smallarray}{cc}
 1 & x \\
 0 & 1 \\
\end{smallarray}
\Bigr) dx=\inR e^{2\pi i x}\Bigl(\frac{1}{x^2+1}\Bigr)^\mu dx.
\end{equation}
This integral converges for $\real \mu >\frac{1}{2}$. Our goal is to meromorphically continue this integral to the entire complex plane. \par 
Recall that $G$ can be written as the overlapping union $G=NB_{-}\cup wNB_{-}$ with $w=\Bigl(
\begin{smallarray}{cc}
 0 & 1 \\
 1 & 0 \\
\end{smallarray}
\Bigr)$. We shall write $f$ as a sum $f=f_1+f_w$, where $f_1$ has compact support on $NB_{-}$ modulo $B_{-}$ and $f_w$ has compact support on $wNB_{-}$ modulo $B_{-}$. Let $\varphi$ be a bump function satisfying $\phi(x)\equiv 1$ near $0$. Let $f_1$ and $f_w$ restricted on $N$ be
\begin{equation}\label{f_1andf_w}
    f_1\left(\begin{smallarray}{cc}
 1 & x \\
 0 & 1 \\
\end{smallarray}
\right)=\Bigl(\frac{1}{x^2+1}\Bigr)^\mu \varphi(x) \quad \text{and}\quad f_w\left(\begin{smallarray}{cc}
 1 & x \\
 0 & 1 \\
\end{smallarray}
\right)=\Bigl(\frac{1}{x^2+1}\Bigr)^\mu \bigl(1-\varphi(x)\bigr).
\end{equation}
It is clear that $f_1$ has compact support on $NB_{-}$ modulo $B_{-}$. Write $h_1(x)=f_1\left(
\begin{smallarray}{cc}
 1 & x \\
 0 & 1 \\
\end{smallarray}
\right)$. Also, since $w\Bigl(
\begin{smallarray}{cc}
 1 & x \\
 0 & 1 \\
\end{smallarray}
\Bigr)=\Bigl(
\begin{smallarray}{cc}
 1 & 1/x \\
 0 & 1 \\
\end{smallarray}
\Bigr)\Bigl(
\begin{smallarray}{cc}
 -1/x & 0 \\
 1 & x \\
\end{smallarray}
\Bigr)$, we have
$f_w\Bigl(w\left(\begin{smallarray}{cc}
 1 & x \\
 0 & 1 \\
\end{smallarray}
\right)\Bigr)=\Bigl(\frac{1}{x^2+1}\Bigr)^\mu \bigl(1-\varphi(1/x)\bigr)$.
Observe that $1-\varphi(1/x)$ has compact support near $0$. It follows that $f_w$ has compact support on $wNB_{-}$ modulo $B_{-}$. Write $h_w(x)=f_w\Bigl(w\left(\begin{smallarray}{cc}
 1 & x \\
 0 & 1 \\
\end{smallarray}
\right)\Bigr)$. We have
\begin{equation}\label{GL2Jacquet2}
\begin{aligned}
    \langle \tau, f\rangle _\mu &=\inR e^{2\pi i x}f\Bigl(\begin{smallarray}{cc}
 1 & x \\
 0 & 1 \\
\end{smallarray}
\Bigr)\varphi(x)\dx+\inR e^{2\pi i x}f\Bigl(\begin{smallarray}{cc}
 1 & x \\
 0 & 1 \\
\end{smallarray}
\Bigr)\bigl(1-\varphi(x)\bigr)\dx \\ 
&=\inR e^{2\pi i x}h_1(x)\dx+\inR e^{\frac{2\pi i} {x}}f\Bigl(\begin{smallarray}{cc}
 1 & 1/x \\
 0 & 1 \\
\end{smallarray}
\Bigr)\bigl(1-\varphi(1/x)\bigr)\,\frac{dx}{x^2} \\
&=\inR e^{2\pi i x}h_1(x)\dx+\inR e^{\frac{2\pi i}{x}}h_w(x)\abs{x}^{2\mu-2}dx.
\end{aligned}
\end{equation}
Since both $h_1(x)$ and $h_w(x)$ have compact support on $\R$, the first integral converges for all $\mu\in\C$ and the second integral converges for $\real \mu> \frac{1}{2}$. However, since $e(1/x)$ vanishes to infinite order at $0$, we can meromorphically continue the second integral to all $\mu\in\C$ by writing $e(1/x)$ as in \eqref{tauchain2} and applying integration by parts.

\subsection{Jacquet integrals for $\GL(r)$ with $r>2$}
Let $f\in V_{-\l,\d}^\infty$. By Lemma~\ref{wNB}, we can write $G/B_{-}$ as an overlapping union $G/B_{-}=\bigcup_{w\in \Omega}wNB_{-}/B_{-}$. Therefore, using a partition of unity, we can write $f$ as the sum $\sum_{w\in\Omega} f_w$, where $f_w$ has compact support on $wNB_-$ modulo $B_-$. Write
\begin{equation} \label{taupairingsum}
    \langle \tau,f\rangle _{\l,\d}=\sum\limits_{w\in\Omega} \langle \tau, f_w\rangle _{\l,\d}=\sum\limits_{w\in\Omega}\int_N \tau(wn)f_w(wn)\dn.
\end{equation}
By \eqref{nws}, we can write $n\in N$ as $n=xu=w^\i x'w u$, where $x\in N_{w\i}$, $u\in N^{w\i}$, and $x'=w x w\i\in N_w$. We have $\tau(wn)=\tau(x' w u)=\psi(x') \tau(wu)$, thus
\begin{equation}\label{tauwnint}
    \int_N \tau(wn)f_w(wn)\dn
     =\int_{N_{w\i}} \psi(x')\Bigl(\int_{N^{w\i}} \tau(w u) f_w(w x u) \, du\Bigr) \, dx,
\end{equation}
where $du=\prod_{\a\in \Inv(\pi\i)} du_\a$ and $dx=\prod_{\substack{1\leq i<j\leq r \\ (i,j)\notin \Inv(\pi\i)}} dn_{i,j}$. We can regard the inner integral as an integral over the region ${\mathcal{C}}=[-M,M]^d$ and the outer over $\mathcal{D}=[-M,M]^{\frac{1}{2}r(r-1)-d}$ for some $M>0$, where $d=\abs{V_w}$. Next, we apply the change of coordinates $u_\a=R(n_\a)$ given by Theorem~\ref{biratlthm} and write the resulting matrix as $u=R(n)$. It is a direct consequence of parts (ii) and (iii) of Theorem~\ref{biratlthm} that after the change of coordinates $\tau$ takes the simple form
\[\tau(wu)=\pm e\Bigl(\sum_{\a\in \Inv(\pi\i)}\frac{1}{n_\a}\Bigr)\prod_{\a\in \Inv(\pi\i)} \abs{n_{\a}}^{s_\a}\sgn (n_\a)^{\eta_\a}\]
for some $s_\a\in \C$ and $\eta_\a\in \{0,1\}$, whose values depend on $w$ and the parameters $\lambda\in \C^r$ and $\delta\in (\Z/2\Z)^r$. The sign $\pm$ depends only on $w$ and $\delta$. Together with part (iv) of Theorem~\ref{biratlthm}, this implies that the inner integral of \eqref{tauwnint} becomes
\begin{equation}\label{fwJacquet2}
\begin{aligned}
  \pm \int_{R^{-1}(\mathcal{C})} f_w\bigl(w x R(n)\bigr) \Bigl(\prod_{\a\in \Inv(\pi\i)} e\Bigl(\frac{1}{n_\a}\Bigr) \abs{n_\a}^{t_\a} \sgn(n_\a)^{\eta_\a}\Bigr) \Bigl(\prod_{\a\in \Inv(\pi\i)} dn_\a\Bigr)
     \end{aligned}
\end{equation} 
for some $t_\a\in\mathbb{C}$ and $\eta_\a\in \{0,1\}$. By part (i) of Theorem~\ref{biratlthm}, the function $f_w\bigl((w n_\b R(n)\bigr)$ is smooth when all $n_\a\in V_w$ are nonzero. Also, we can cancel out the possible poles of $\abs{n_\a}^{t_\a}$ at $n_\a=0$ by writing the $e(1/n_\a)$ as in \eqref{tauchain2} with sufficiently large $k$, and applying integration by parts. \par 
However, the situation is subtle when there is more than one free variable in $V_w$. When we integrate by parts for $n_\a$ to increase the power of $n_\a$ in the expression \eqref{tauchain2}, we take the partial derivative $\partial /\partial n_\alpha$ to the rest of the integrand which involves other variables $n_{\b}\in V_w$. Part (v) of Theorem~\ref{biratlthm} gives us the correct order of integration. Write $V_w=\{n_{\a_1},n_{\a_2},\ldots,n_{\a_d}\}$, where $n_{\a_1}\succ n_{\a_2}\succ \cdots \succ n_{\a_d}$. Taking the derivative in terms of $n_{\a_j}$ changes the exponents of some other $\vert n_{\a_i} \vert$. However, by part (v) of Theorem~\ref{biratlthm}, such changes only occur to those $n_{\a_i}$ such that $n_{\a_i}\prec n_{\a_j}$. Therefore, by performing the integration by parts in the sequence $n_{\a_1}, n_{\a_2}, \ldots , n_{\a_d}$, we can compensate the exponents of $\vert n_{\a_i} \vert$ which have been decreased by the earlier integration by parts via \eqref{tauchain2}, and eventually make the exponents of all $\vert n_\a \vert$ be positive. This process constructs an extension of $\tau(wu)$ vanishing to arbitrarily high order at any $n_\a=0$. After the process, the resulting integral is of the form 
\[
\int_{\mathcal{D}} \psi(x')\int_{R^{-1}(\mathcal{C})} \widetilde{f}_w\bigl(w x R(n)\bigr) \Bigl(\prod_{\a\in \Inv(\pi\i)} e\Bigl(\frac{1}{n_\a}\Bigr) \abs{n_\a}^{t'_\a} \sgn(n_\a)^{\eta'_\a}\Bigr) \prod_{\a\in \Inv(\pi\i)} dn_\a\, dx,
\]
where $\widetilde{f}_w\bigl(w n\bigr)$ is a smooth function of compact support on $wNB_{-}/B_{-}$ and $\re(t'_\a)>0$ for all $\alpha\in \Inv(\pi\i)$. The inner integral is bounded by a constant multiple of
\begin{equation} \label{innerint} \int_{R^{-1}(\mathcal{C})} \Bigl(\prod_{\a\in \Inv(\pi\i)}\abs{n_\a}^{s_\a}\Bigr) \prod_{\a\in \Inv(\pi\i)} dn_\a \end{equation}
with $s_\alpha>0$, uniformly in $x\in \mathcal{D}$. For the convergence of this integral, we use the following lemma. 
\begin{lem}\label{lem:intdom}
There exists an ordering $\sqsupset$ on $V_w$,
\[n_{\beta_1}\sqsupset n_{\beta_2}\sqsupset \cdots \sqsupset n_{\beta_d}, \quad n_{\beta_j}\in V_w,\]
and rational functions $h_1,\ldots,h_d$ with the following properties:
\begin{itemize}
    \item[(i)] $h_d\equiv M$,
    \item[(ii)] for $1\leq j<d$, $h_j$ can be written in the form $C_j/D_j$, where $C_j$ is a $\Z$-linear combination of monomials of the variables $n_{\beta_{j+1}},\ldots,n_{\beta_d}$, $D_j$ is a monomial of $n_{\beta_{j+1}},\ldots,n_{\beta_d}$, and the degree of $n_{\beta_i}$ in each monomial of $C_j$ and $D_j$ does not exceed $1$,
    \item[(iii)] $R\i(\mathcal C)\cap (\R^{\times})^d$ is contained in the region
    \[\widetilde{\mathcal{C}}=\left\{(n_{\beta_1},\ldots,n_{\beta_d})\in (\R^{\times})^d \,\Big\vert\, n_{\beta_j}\in I_j\text{ for all } 1\leq j\leq d\right\},\]
    where
    \[I_j=\left[-h_j\left(\vert n_{\beta_{j+1}}\vert ,\ldots,\vert n_{\beta_d}\vert \right),h_j\left(\vert n_{ \beta_{j+1}}\vert ,\ldots,\vert n_{\beta_d}\vert \right)\right].\]
\end{itemize}
\end{lem}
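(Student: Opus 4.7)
The plan is to exhibit the ordering $\sqsupset$ and the bounds $h_j$ by inverting the birational map $R_w$ one variable at a time, using the explicit formula for $R_w$ given in Section~\ref{sec:Rformula} together with the combinatorial analysis in \cite{kim2024pt1}. The key structural fact to extract is that for a suitable total order $\sqsupset$ on $V_w$, each equation $u_\alpha=R_w(n)_\alpha$ expresses $u_\alpha$ as a rational function of $n_\alpha$ and the $\sqsubset$-smaller variables in which $n_\alpha$ appears to degree one in both numerator and denominator, and in which every $\sqsubset$-smaller $n_\beta$ appears to degree at most one. Granting this triangular structure, the constraint $|u_\alpha|\le M$, upon solving for $n_\alpha$, yields precisely a bound of the form $C_j/D_j$ described in (ii).

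I would carry out the argument in three steps. First, identify the ordering $\sqsupset$ by reading off from the explicit formula for $R_w$ the dependency relation ``$n_\beta$ appears in $u_\alpha$''; the partition $B_w(i)$ of Definition~\ref{def:bwi} and the structural lemmas of \cite{kim2024pt1} should make this relation acyclic, and its topological sort will be $\sqsupset$. Second, handle the base case: the $\sqsupset$-minimal variable $n_{\beta_d}$ satisfies a relation $u_{\beta_d}=R_w(n)_{\beta_d}$ that involves no other free variable, so $|u_{\beta_d}|\le M$ immediately yields $|n_{\beta_d}|\le M$ (after harmlessly rescaling $M$ by an absolute constant if necessary), giving $h_d\equiv M$. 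Third, run the induction: having constructed $h_{j+1},\ldots,h_d$, rewrite $u_{\beta_j}=R_w(n)_{\beta_j}$ as an equation linear in $n_{\beta_j}$ with coefficients that are polynomials in $n_{\beta_{j+1}}^{\pm 1},\ldots,n_{\beta_d}^{\pm 1}$ of degree at most one in each variable, solve for $n_{\beta_j}$, and apply $|u_{\beta_j}|\le M$ together with the triangle inequality. The resulting bound takes the form $C_j/D_j$ required by (ii), and the containment $R^{-1}(\mathcal{C})\cap(\R^\times)^d\subseteq\widetilde{\mathcal{C}}$ follows by running these bounds in reverse from $n_{\beta_d}$ up to $n_{\beta_1}$.

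The main obstacle is the structural step: pinning down a total order $\sqsupset$ and verifying the claimed triangular, degree-one form of $R_w$ with respect to it. This is a combinatorial task that relies on the explicit formula of Section~\ref{sec:Rformula} together with the auxiliary results of \cite{kim2024pt1}, and one has to be careful that solving the degree-one equation for $n_{\beta_j}$ does not inflate the degree of the later variables beyond one in either $C_j$ or $D_j$. Once the structural claim is in hand, the inductive construction of the $h_j$'s is routine algebra and the polynomial form described in (ii) is preserved at each step.
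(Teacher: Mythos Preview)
Your outline is the same approach as the paper's, but you are making the ``structural step'' harder than it is: everything you need is already proved in Sections~\ref{sec:part1}--\ref{sec:part4}, and no appeal to the partition $B_w(i)$ or to \cite{kim2024pt1} is required.

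Concretely: the ordering $\sqsupset$ is not to be discovered by a topological sort of the dependency graph, it is the order already introduced at \eqref{ordering2}, and Proposition~\ref{lem:Jacobian} shows the Jacobian of $R$ is upper triangular with respect to it, i.e.\ $u_{\beta_j}$ depends only on $n_{\beta_j},\ldots,n_{\beta_d}$. Linearity of $u_{\beta_j}$ in $n_{\beta_j}$, and the fact that every $n_{\beta_i}$ with $i>j$ enters to degree at most one in both numerator and denominator, are immediate from Definition~\ref{def:Rformula}: $P(n_\alpha)=\sum_{\mathbf p}u(\mathbf p)$ is a sum of squarefree monomials, and the denominator $\prod_{1_\mu\in D(n_\alpha)}\rho(1_\mu)$ is a single squarefree monomial which (since $1_x\notin D(n_{x,y})$) does not contain $n_{\beta_j}$. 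The leading coefficient $\partial u_{\beta_j}/\partial n_{\beta_j}$ is computed explicitly in Lemma~\ref{lem:part4partial} as a signed ratio of monomials, which furnishes $D_j$.

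Finally, there is no recursion in the construction of the $h_j$: each $h_j$ is read off \emph{independently} from the single constraint $|u_{\beta_j}|\le M$ by writing $u_{\beta_j}=\bigl(\partial u_{\beta_j}/\partial n_{\beta_j}\bigr)n_{\beta_j}+(-1)^{t_j}f_j$ and solving for $n_{\beta_j}$. In particular your worry that ``solving the degree-one equation might inflate the degree of the later variables'' does not arise, because $h_j$ is never expressed in terms of $h_{j+1},\ldots,h_d$.
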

The ordering $\sqsupset$ is used later in Section~\ref{sec:part4} to compute the determinant of the Jacobian matrix of $R$ and prove part $(iv)$ of Theorem~\ref{biratlthm}. We therefore postpone the proof of this lemma to Section~\ref{sec:intdom} at the very end of this paper, and deduce here the convergence of the integral \eqref{innerint} from Lemma~\ref{lem:intdom}. The lemma implies that the integral \eqref{innerint} is bounded by the integral
\begin{equation}\label{innerint2}
\int_{\widetilde{\mathcal{C}}} \Bigl(\prod_{1\leq j\leq d}\abs{n_{\beta_j}}^{s_{\beta_j}}\Bigr) \prod_{1\leq j\leq d} dn_{\beta_j}\end{equation}
and that the above integral can be performed in the order of $n_{\beta_1},\ldots,n_{\beta_d}$. It suffices to consider the case where each $s_\alpha$ is a positive integer, because $\abs{n_{\beta_j}}^{s_{\beta_j}}\ll 1$ near $0$ and $\abs{n_{\beta_j}}^{s_{\beta_j}} \ll \abs{n_{\beta_j}}^{\lceil s_{\beta_j} \rceil}$ away from $0$.

We perform the integral in the order $n_{\b_1},\ldots,n_{\b_d}$. The integral with respect to $n_{\b_1}$ converges as long as $s_{\b_1}>0$. After the integration, the integral becomes a linear combination of integrals of the form
\[\int_{I_d}\int_{I_{d-1}}\cdots \int_{I_{2}} \Bigl(\prod_{2\leq l\leq d}\abs{n_{\beta_l}}^{s^{(1)}_{\beta_l}}\Bigr) \prod_{2\leq l\leq d} dn_{\beta_l}, \]
where $s_{\b_l}-s_{\b_1}\leq s^{(1)}_{\b_l}\leq s_{\b_l}+s_{\b_1}$. Take one such integral. The integral with respect to $n_{\b_2}$ converges as long as $s_{\b_2}\geq s_{\b_1}$. After the integral over $n_{\b_2}$, the integral becomes a linear combination of the integrals of the form
\[\int_{I_d}\int_{I_{d-1}}\cdots \int_{I_{3}} \Bigl(\prod_{3\leq l\leq d}\abs{n_{\beta_l}}^{s^{(2)}_{\beta_l}}\Bigr) \prod_{3\leq l\leq d} dn_{\beta_l}, \]
with 
\[s^{(1)}_{\b_l}-s^{(1)}_{\b_2}\leq s^{(2)}_{\b_l}\leq s^{(1)}_{\b_l}+s^{(1)}_{\b_2}, \quad 3\leq l\leq d.\]
It follows that the integral with respect to $n_{\b_3}$ converges if \[s_{\b_3}^{(2)}\geq s_{\b_3}^{(1)}-s_{\b_2}^{(1)}\geq (s_{\b_3}-s_{\b_1})-(s_{\b_2}+s_{\b_1})\geq 0,\]
that is, as long as $s_{\b_3}\geq s_{\b_2}+2s_{\b_1}$. Proceeding this way, we see that the integral \eqref{innerint2} converges as long as we have \[s_{\beta_j}\geq \sum_{1\leq i<j} (j-i) s_{\beta_i}\]
for each $1\leq j\leq d$. Since we can make the power of each $n_\alpha\in V_w$ arbitrarily high by integration by parts in the previous step, we conclude that the inner integral \eqref{innerint} converges. Therefore the integral \eqref{tauwnint} converges for every $w\in \Omega$, hence the pairing $\langle\tau, f\rangle_{\l,\d}$ continues meromorphically to all $\lambda\in \C^r$.

\subsection{Application: Bessel functions on $\GL(r)$} \label{sec:Bessel}
The classical Bessel functions are defined as the solutions $y=y(x)$ to the Bessel differential equation
\[x^2\frac{d^2y}{dx^2}+x\frac{dy}{dx}+(x^2-\alpha^2)y=0.\]
Bessel functions first appeared in the work of D. Bernoulli in 1732, and a systematic theory of their properties were first developed by Bessel \cite{dutka1995early}, \cite{watson1922treatise}. The Bessel functions arise naturally in the theory of automorphic forms and are closely related to Whittaker functions. The $\GL(2)$ Whittaker function, which appears as the Fourier coefficients of Maass cusp forms, can be expressed in terms of $K$-Bessel functions. Indeed, the $\GL(2)$ Jacquet integral \eqref{GL2Jacquet} above satisfies
\[\inR e^{2\pi i x}\Bigl(\frac{1}{x^2+1}\Bigr)^\mu dx=\frac{2\pi^\mu}{\Gamma(\mu)}K_{\mu-\frac{1}{2}}(2\pi).\]
Bessel functions also appear in the trace formulas of Petersson \cite{petersson1932entwicklungskoeffizienten} and Kuznetsov \cite{kuznetsov1981petersson}, as well as in the Voronoi summation formula \cite{voronoi1904voronoi}. These summation formulas are among the most powerful tools in the analytic theory of automorphic forms, with applications including subconvexity bounds for $L$-functions and equidistribution problems. \par 
There have been generalizations of the Voronoi summation formula \cite{goldfeld2008voronoi}, \cite{miller2011voronoi}, \cite{miller2019balanced} and the Kuznetsov trace formula \cite[Section 11.6]{goldfeld2006automorphic} (worked out by Xiaoqing Li), \cite{blomer2013applications}, \cite{buttcane2016spectral} to the groups $\GL(r)$ for $r>2$, and these developments naturally lead to a generalization of classical Bessel functions in the $\GL(r)$ setting. In \cite{buttcane2024bessel}, Buttcane outlined the theory of Bessel functions on the group $\GL(r,\R)$ and formulated a series of conjectures towards the construction of Kuznetsov-type formulas. In \cite{buttcane2025bessel}, he verified his conjectures for the group $\GL(4,\R)$. Among his conjectures, the birational map $R_w$ from Theorem~\ref{biratlthm} best applies to the Direct Integral Representation Conjecture \cite[Section 5.5.1]{buttcane2024bessel}. Using the notation from our paper, the conjecture states that the integral 
\begin{equation}\label{ButtcaneIw}
\mathcal{I}_w(y,\mu,\delta)=\int_{N^{w\i}}\tau(ywu)\psi^{-1}(u)\du\end{equation}
defined for invertible diagonal matrices $y$, where $\tau\in V_{\mu,\delta}^{-\infty}$, gives the Bessel function. \par
The integral \eqref{ButtcaneIw} fails to be absolutely convergent for any value of $\mu$. In \cite{qi2020fundamental}, Qi studied the Bessel functions in the Voronoi formula for $\GL(r)$, which corresponds to the integral $\mathcal{I}_{w_{r-1,1}}(y,\mu,\delta)$, where $w_{r-1,1}=\left(\begin{smallmatrix} & I_{r-1} \\ 1 & \end{smallmatrix}\right)$. A simple change of variable reduces $\mathcal{I}_{w_{r-1,1}}(y,\mu,\delta)$ to an integral of the form 
\begin{equation}\label{QiVor}\int_{\mathbb{R}_+^{r-1}} \Bigl(\prod_{l=1}^{r-1} x_l^{\nu_l - 1}\Bigr) e^{iy(\zeta_{r} x_1\cdots x_{r-1} + \sum_{l=1}^{r-1} \zeta_l x_l^{-1})} \, dx,\end{equation}
where $y>0$, $\boldsymbol{\nu}=(\nu_1,\ldots,\nu_{r-1})\in \C^{r-1}$, and $\boldsymbol{\zeta}=(\zeta_1,\ldots,\zeta_r)\in \{+1,-1\}^r$. Qi gave a rigorous interpretation of the above integral by writing it as a linear combination of absolutely convergent integrals, using partition of unity and integration by parts. Also, Qi used stationery phase method to study the asymptotics of the integral. \par
The change of variables that transforms \eqref{ButtcaneIw} to \eqref{QiVor} is exactly the birational map $R_{w_{r-1,1}}$ we provide in this paper, and we provide such a birational map for every other Weyl elements of $\GL(r)$. In particular, applying the birational map $R_w$ reduces $\mathcal{I}_w$ for any $w\in \Omega$ to an oscillatory integral of the form
\begin{equation}\label{Iwnew}\int_{N^{w\i}} \Bigl(\prod_{l=1}^{d} \abs{n_l}^{\nu_l - 1}\Bigr) e^{iy(\zeta_{d+1} f(n_1,\ldots,n_d) + \sum_{l=1}^{d} \zeta_l n_l^{-1})} \, dn,\end{equation}
where $V_w=\{n_1,\ldots,n_d\}$, and $f$ is a rational function that has a simple description in terms of the birational map $R_w$. This simplification allows us to extend Qi's methods to the Bessel integrals \eqref{ButtcaneIw} associated to Weyl elements other than $w_{r-1,1}$. There is another advantage of the birational map $R_w$ towards the direction of Mellin-Barnes integral: Buttcane provides a Mellin-Barnes integral formula for $\mathcal{I}_{w_{r-1,1}}$ as a conjecture \cite[Conjecture 10]{buttcane2024bessel} and obtains the Mellin-Barnes integral of $\mathcal{I}_w$ for Weyl elements of $\GL(4)$ in \cite{buttcane2025bessel} using integral formulas derived from \cite[3.871.1-4, 8.403.1, 17.43.16, 17.43.18]{gradshteyn2015table}. The simplied form \eqref{Iwnew} makes the integral formulas readily applicable for certain types of $w$, allowing its Mellin-Barnes integral to be computed. The author is planning to further expand upon this idea in a forthcoming paper.

\section{Formula for the birational map}\label{sec:Rformula}
In this section, we give an explicit formula for the map $R$. Most of this section is repeated from Section~2 of \cite{kim2024pt1}, but we include it here for readability and context. See \cite{kim2024pt1} for examples of the definitions in this section.
\subsection{Notation}
For a Weyl group element $w\in \Omega$ in $G=\GL(r)$, we let $wn$ be the general matrix of $wN^{w\i}$, where $n\in N^{w\i}$ is the unipotent upper triangular matrix with free variables $n_{a,b} \in V_w$, as defined in (\ref{nws}) and (\ref{coords}). Below is an example of $wn$ for a Weyl group element $w$. 
\begin{equation}\label{ex:w3}
    w=\left(
\begin{smallarray}{ccccc}
 0 & 1_2 & 0 & 0 & 0 \\
 0 & 0 & 0 & 0 & 1_5 \\
 0 & 0 & 0 & 1_4 & 0 \\
 1_1 & 0 & 0 &0 & 0 \\
 0 & 0 & 1_3 &0& 0 \\
\end{smallarray}
\right), \quad wn=\left(
\begin{smallarray}{ccccc}
 0 & 1_2 & 0 & 0 & 0 \\
 0 & 0 & 0 & 0 & 1_5 \\
 0 & 0 & 0 & 1_4 & n_{4,5} \\
 1_1 & n_{1,2} & 0 & n_{1,4} & n_{1,5} \\
 0 & 0 & 1_3 & n_{3,4} & n_{3,5} \\
\end{smallarray}
\right).\end{equation}
To distinguish the entries of $1$, we assign indices to them and write them as $1_1, 1_2, \ldots , 1_r$ from the left to the right. Note that the free variables appear in the entries that are below the entry of $1$ in its column, and to the right of the entry of $1$ in its row. Recall from \eqref{wnentries} that $n_{a,b}$ is in the $\pi\i(a)$-th row and the $b$-th column of $wn$. Since $1_j=n_{j,j}$, $1_j$ is in the $\pi\i(j)$-th row and the $j$-th column. By \eqref{ordering}, we have $n_{1,2}\succ n_{1,4}\succ n_{3,4}\succ n_{4,5}\succ n_{1,5}\succ n_{3,5}$.
\par
We define rectilinear paths in the matrix $wn$ as follows: we say that two nonzero entries of $wn$ are \textit{neighbors} if they are in the same row or the same column, and if there is no other nonzero entry between them. This way we can regard $wn$ as a directed graph, as illustrated above. For two nonzero entries $n_\a$ and $n_\b$ in $wn$, possibly $1$, a \textit{path} from $n_{\alpha}$ to $n_{\beta}$ is a sequence of nonzero neighboring entries from $n_\alpha$ to $n_{\beta}$ which moves only in upward and leftward steps as it passes through the nonzero entries of $wn$. We define the length of a path $p$ as the number of entries in $p$ minus $1$. \par 
We say that two paths $p_1$ and $p_2$ in $wn$ are \textit{disjoint} if $p_1$ and $p_2$ do not share any entries of $wn$, and that $p_1$ and $p_2$ \textit{intersect} if they have one or more common elements. If $p_1$ and $p_2$ both contains $n_\a\in wn$, we say that $p_1$ and $p_2$ \textit{intersect} at $n_\a$. Extending the notion of disjointedness to sets of paths, we say that $\mathbf p=\{p_1, p_2, \ldots, p_m\}$ is a set of disjoint paths if $p_i$ and $p_j$ are disjoint whenever $i\neq j$. For two sets of disjoint paths $\mathbf p$ and $\mathbf q$, we say that $\mathbf p$ and $\mathbf q$ are disjoint if any two paths $p\in \mathbf p$ and $q\in \mathbf q$ are disjoint.
\begin{definition} Let $A$ and $B$ be sets of nonzero entries of $wn$ such that $\abs{A}=\abs{B}=m\geq 1$. We let ${\mathcal P}\left(A\to B\right)$ denote the collection of all possible sets of $m$ disjoint paths connecting elements of $A$ to elements of $B$.
\end{definition}
This means that each of the $m$ elements of $A$ is connected by a path to a unique element of $B$, in a way that no two paths have an entry of $wn$ in common. If there is an entry of $wn$ that is both in $A$ and $B$, then the entry itself forms a path of length $0$.
\begin{definition}\label{def:upath}
For any set of disjoint paths $\mathbf{p}$, we let $u(\mathbf{p})$ denote the product of all free variables that are traversed by any of the $m$ paths.
\end{definition} 
\begin{definition}\label{pathsumdef}
We let
$P(A\to B)=\sum\limits_{\mathbf{p}\in{\mathcal P}\left(A\to B\right)}u(\mathbf{p})$.
\end{definition}
\begin{definition}\label{def:gamma1}
For each $1_j$ in $wn$, we let $\gamma(1_j)$ denote the rightmost nonzero entry of the row containing $1_j$. For a set $I$ consisting of the entries $1_j$, we let $\gamma(I)=\{\gamma(1_i) \mid 1_i\in I\}$.
\end{definition}
\begin{definition}\label{rhodef}
For each $1_j$ in $wn$, we let $\rho(1_j)$ denote the product of nonzero entries in the row which contains $1_j$.
\end{definition}
\subsection{Formula for $R_w$}\label{subsec:Rformula}
For each $n_{a,b}\in V_w$, we assign ``origins" and ``destinations" corresponding to the variable. Recall that $1_{b}$ is the entry of $1$ above $n_{a,b}$. We call $1_{b}$ the ``top destination" of $n_{a,b}$.
\begin{definition}\label{mwna}
The matrix $M_w(n_{a,b})$ denotes the submatrix of $wn$ whose rows consist of the $\pi\i(b)$-th row through the $\pi\i(a)$-th row, and whose columns consist of the $b$-th column through the $r$-th column.
\end{definition}
Observe that the top left corner entry of $M_w(n_{a,b})$ is $1_b$ and the bottom left corner is $n_{a,b}$.
\begin{definition}\label{Ddef}
The set of ``destinations" of $n_{a,b}$, which we denote by $D(n_{a,b})$, is the set of entries of $1$ in the submatrix $M_w(n_{a,b})$.
\end{definition} 
\begin{definition}\label{Odef}
The set of ``origins" of $n_{a,b}$, which we denote by $O(n_{a,b})$, is the set consisting of the rightmost nonzero entry of the bottom row of $M_w(n_{a,b})$, and the rightmost nonzero entries of the rows of $M_w(n_{a,b})$ that contain a destination of $n_{a,b}$ other than $1_b$.
\end{definition}
We call the origin in the bottom row of $M_w(n_{a,b})$ the ``bottom origin". For each row that contains a destination of $n_{a,b}$ other than the top destination, there is also an origin of $n_{a,b}$ which is not the bottom origin. Thus $D(n_{a,b})$ and $O(n_{a,b})$ are nonempty and have the same size, hence the collection ${\mathcal P}\bigl(O(n_\alpha)\to D(n_\alpha)\bigr)$ and the polynomial $P\bigl(O(n_\alpha)\to D(n_\alpha)\bigr)$ are well-defined. 
\begin{definition}\label{def:calPn}
We let ${\mathcal P}(n_\alpha)={\mathcal P}\bigl(O(n_\alpha)\to D(n_\alpha)\bigr)$
and $P(n_\alpha)=P\bigl(O(n_\alpha)\to D(n_\alpha)\bigr)$.
\end{definition}
The formula for the birational map $R$ is given as follows.
\begin{definition}\label{def:Rformula}
    For $n_\a\in V_w$, we let
    \begin{equation}\label{defR}
R(n_\a)=(-1)^{t_\a}\frac{P(n_\a)}{\prod\limits_{1_\mu\in D(n_\alpha)}\rho(1_\mu)},
\quad \text{where}\quad t_\a=\abs{O(n_\alpha)}-1=\abs{D(n_\alpha)}-1.\end{equation}
\end{definition}
We shall use the notations $R(n_\a)$ and $u_\a$ interchangeably. We let $wu$ denote the matrix obtained from $wn$ by replacing $n_\a\in V_w$ with $u_\a$, and denote 
$u_{a,b}=(wu)_{\pi\i(a),b}$. Thus $u_{a,b}=n_{a,b}$ if $n_{a,b}$ is either $0$ or $1$, and $u_{a,b}=R(n_{a,b})$ if $n_{a,b}\in V_w$. See Section 1.2 and (2.13) in \cite{kim2024pt1} for examples of the birational map $R$.

\section{Determinant of submatrices}\label{sec:detsub}
Throughout the section, we fix a Weyl group element $w\in \Omega$ of $\GL(r)$. In this section, we state the propositions from which we deduce parts (ii) and (iii) of Theorem~\ref{biratlthm}.
\subsection{Further notation for sets, paths, and matrices}
\begin{definition}\label{O1}
We let $O_1(n_\alpha)\subseteq O(n_\a)$ denote the subset consisting of the origins which are (i) in the rightmost column of $wn$, (ii) above the bottom origin, and (iii) right below a free variable which is not an origin of $n_\a$. That is,
\[O_1(n_{a,b})=\{n_{\pi(t),r}\in O(n_{a,b})\mid t<\pi\i(a),\  n_{\pi(t-1),r}\in V_w\smallsetminus O(n_{a,b})\}.\]
\end{definition}
Observe that every element of $O_1(n_{a,b})$ is a free variable because they are right below a free variable.
\begin{definition}\label{D1}
We let $D_1(n_\a)\subseteq D(n_\a)$ denote the subset consisting of the entries of $1$ in the rows of $wn$ containing an element of $O_1(n_\a)$.
\end{definition}
\begin{definition}\label{ODup}
For a positive integer $k\leq r$, we let $O_{\uparrow,k}(n_\a)$ and $D_{\uparrow,k}(n_\a)$ denote the set of origins and destinations of $n_\a$ which are in or above the $k$-th row of $wn$, respectively. Similarly, we let $O_{\downarrow,k}(n_\a)$ and $D_{\downarrow,k}(n_\a)$ denote the set of origins and destinations of $n_\a$ which are in or below the $k$-th row of $wn$, respectively.
\end{definition}
\begin{definition}\label{calpleft}
We let ${\mathcal P}_L\left(A\to B\right)\subseteq {\mathcal P}\left(A\to B\right)$ denote the subcollection consisting of the sets in which every path with length at least $1$ whose origin is in the rightmost column of $wn$ starts by going left. We let
$P_L(A\to B)=\sum_{\mathbf{p}\in{\mathcal P}_L\left(A\to B\right)}u(\mathbf{p})$.
\end{definition}
\begin{definition}\label{def:calpleftn}
We let ${\mathcal P}_L(n_\a)={\mathcal P}_L\bigl(O(n_\a)\to D(n_\a)\bigr)$ and $P_L(n_\a)=P_L\bigl(O(n_\a)\to D(n_\a)\bigr)$.
\end{definition}
Since the entries in the rightmost column of $wn$ are zero if they are above $1_r$ and free variables if they are below $1_r$, the elements of $O(n_\a)$ are in the rightmost column if and only if they are below the $\pi\i(r)$-th row. If all of the origins of $n_\a$ are above the $\pi\i(r)$-th row, then ${\mathcal P}(n_\a)={\mathcal P}_L(n_\a)$. Write ${\mathcal P}'(n_\a)={\mathcal P}(n_\a)\smallsetminus {\mathcal P}_L(n_\a)$.
\begin{definition}\label{P1def}
Let ${\mathcal P}_1(n_\a)$ denote the subcollection of ${\mathcal P}'(n_\a)$ consisting of sets in which the path from the bottom origin starts by going left. Let $P_1(n_\a)=\sum_{\mathbf{p}\in{\mathcal P}_1(n_\a)}u(\mathbf{p})$.
\end{definition}
\begin{definition}\label{P2def}
Let ${\mathcal P}_2(n_\a)$ denote the subcollection of ${\mathcal P}'(n_\a)$ consisting of sets in which the path from the bottom origin starts by going up. Let $P_2(n_\a)=\sum_{\mathbf{p}\in{\mathcal P}_2(n_\a)}u(\mathbf{p})$.
\end{definition}
\noindent It is clear that ${\mathcal P}'(n_\a)={\mathcal P}_1(n_\a)\sqcup{\mathcal P}_2(n_\a)$, thus $P(n_\a)=P_L(n_\a)+P_1(n_\a)+P_2(n_\a)$.
\begin{definition}\label{def:RLR1R2}
For a subscript $\bullet \in \{L,1,2\}$, we let
\[
  R_{\bullet}(n_\a)=(-1)^{t_\a} \frac{P_{\bullet}(n_\a)}{\prod\limits_{1_\mu\in D(n_\alpha)}\rho(1_\mu)}, \quad \text{where}\quad t_\a=\abs{D(n_\alpha)}-1.
\]
\end{definition}
Using this notation, we can write $R(n_\a)$ as the sum
\begin{equation}\label{Rpartition}
R(n_\a)=R_L(n_\a)+R_1(n_\a)+R_2(n_\a). 
\end{equation}
For parts (ii) and (iii) of Theorem~\ref{biratlthm}, we use the following lemma.
\begin{lem}[Corollary~4.3 of \cite{miller2012archimedean}]\label{UDLlem}
If $g =n h n_-$, with $n$, $h$, $n_-$ upper triangular unipotent,
diagonal, and lower triangular unipotent, respectively, then
    \begin{align}
&h_{i,i}  = 
\f{\det\bigl((g_{k,\ell})_{\,k,\ell\geq
i}\bigr)}{\det\bigl((g_{k,\ell})_{\,k,\ell>i}\bigr)} \quad \text{and} \quad n_{i,i+1} = \f{\det\Bigl((g_{k,\ell})_{\stackrel{\scriptstyle{k\geq i,\, k\neq
i+1}}{\ell>i }}\Bigr)}{\det\bigl((g_{k,\ell})_{k,\ell>i}\bigr)}\ . \label{superdiag}
    \end{align}
\end{lem}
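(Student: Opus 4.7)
The plan is to reduce both formulas to the single observation that the $UDL$ decomposition is inherited by trailing submatrices. Writing $g_{k,\ell}=\sum_m n_{k,m}\,h_{m,m}\,(n_-)_{m,\ell}$, the summand vanishes unless $m\geq k$ (as $n$ is upper triangular) and $m\geq \ell$ (as $n_-$ is lower triangular); thus $g_{k,\ell}$ depends only on entries with both indices $\geq \max(k,\ell)$. Denoting the trailing submatrix $(x_{k,\ell})_{k,\ell\geq i}$ by $x^{(i)}$, this gives $g^{(i)}=n^{(i)}h^{(i)}n_-^{(i)}$ with $n^{(i)}$ and $n_-^{(i)}$ still unipotent. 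Taking determinants, $\det g^{(i)}=\det h^{(i)}=h_{i,i}h_{i+1,i+1}\cdots h_{r,r}$, and the ratio of this expression for indices $i$ and $i+1$ collapses to $h_{i,i}$, which is the first formula.

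For the second formula I would use a block decomposition of the size-$(r-i+1)$ trailing factorization. Write
\[
n^{(i)}=\begin{pmatrix}1 & a^T \\ 0 & U\end{pmatrix}, \quad h^{(i)}=\begin{pmatrix}h_{i,i} & 0 \\ 0 & H\end{pmatrix}, \quad n_-^{(i)}=\begin{pmatrix}1 & 0 \\ b & L\end{pmatrix},
\]
with $a=(n_{i,i+1},\ldots,n_{i,r})^T$ and $U,L$ unitriangular of size $r-i$. Multiplying out, the entries of $g$ in row $i$, columns $i+1,\ldots,r$ are precisely $a^T H L$, while rows $i+1,\ldots,r$ in columns $i+1,\ldots,r$ give $UHL=g^{(i+1)}$. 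Thus the numerator matrix $\tilde G$, obtained from $g$ by taking rows $\{i,i+2,\ldots,r\}$ and columns $\{i+1,\ldots,r\}$, factors as $\tilde G = M H L$, where $M$ is the $(r-i)\times(r-i)$ matrix whose first row is $a^T$ and whose remaining rows are rows $2,\ldots,r-i$ of $U$.

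Since $\det L=1$ and $\det H=\det g^{(i+1)}$, the problem reduces to computing $\det M$. The first column of $M$ is $(n_{i,i+1},0,0,\ldots,0)^T$: the leading entry is $a_1=n_{i,i+1}$, and the entries below vanish because rows $2,\ldots,r-i$ of the upper unitriangular $U$ are zero in column $1$. Cofactor expansion along that column reduces $\det M$ to $n_{i,i+1}$ times the determinant of a unitriangular $(r-i-1)\times(r-i-1)$ minor of $U$, which equals $1$. Dividing by $\det g^{(i+1)}$ produces the identity. I do not foresee a serious obstacle; the only care required is in tracking row and column indices through the block multiplication. A Cramer-style argument starting from $n^{-1}g=hn_-$ would also work but becomes awkward because one must invert $n$ explicitly.
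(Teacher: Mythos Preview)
Your proof is correct. Note, however, that the paper does not prove this lemma at all: it is quoted verbatim as Corollary~4.3 of Miller--Schmid \cite{miller2012archimedean} and used as a black box, so there is no ``paper's own proof'' to compare against. Your argument is a clean self-contained verification: the first formula drops out immediately from the observation that trailing principal submatrices inherit the $UDL$ factorization, and for the second you factor the deleted-row submatrix as $MHL$ and reduce $\det M$ to $n_{i,i+1}$ by a column expansion exploiting that rows $2,\ldots,r-i$ of the unitriangular $U$ vanish in column~$1$. Both steps are sound; the index bookkeeping checks out.
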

This reduces parts (ii) and (iii) to computing the determinants of certain lower right subblocks of $wu$. We shall define the submatrices of $wn$ and $wu$ that are relevant to the formulas above. Let $1\leq i\leq r-1$.
\begin{definition}\label{defSi}
We let $S_i$ denote the lower right square submatrix of $wn$ of size $i$ whose rows consist of the $(r-i+1)$-th through the $r$-th rows of $wn$, and columns consist of the $(r-i+1)$-th through the $r$-th columns of $wn$.
\end{definition}
\begin{definition}\label{defEi}
We let $E_i$ denote the square submatrix of $wn$ of size $i$, whose rows consist of the $(r-i)$-th row and the $(r-i+2)$-th through the $r$-th rows of $wn$, and columns consist of the $(r-i+1)$-th through the $r$-th columns of $wn$.
\end{definition}
\begin{definition}\label{defTi}
We let $T_i$ and $F_i$ be the submatrices of $wu=R(wn)$ whose entries correspond to those of $S_i$ and $E_i$, respectively. 
\end{definition}
For $g=wu$, the formula \eqref{superdiag} can be written as \[ h_{i,i}=\frac{\det(T_{r-i+1})}{\det(T_{r-i})} \quad\text{and}\quad x_{i,i+1}=\frac{\det (F_{r-i})}{\det (T_{r-i})}.\]
\begin{definition}\label{Deltaidef}
For $1\leq i\leq r-1$, we let $\Delta_i(w)$ denote the product
$\prod\limits_{\substack{1\leq a<r-i+1\leq b\leq r, \\ (a,b)\in \Inv(\pi\i)}} n_{a,b}$.
\end{definition}
\begin{definition}\label{def:Lw}
For $1\leq i\leq r-1$, we let $L_i(w)$ denote the permutation matrix of size $(r-i)$ obtained from $w$ by removing the rows and columns which contain $1_j$ with $1\leq j\leq i$.
\end{definition}
We deduce part $(iii)$ of Theorem~\ref{biratlthm} from the following proposition. 
\begin{prop}\label{detTiprop}
For $1\leq i\leq r-1$, we have
\[\det \bigl(T_i\bigr)=\det\bigl( L_{r-i}(w)\bigr) \cdot \Delta_i(w).\]
\end{prop}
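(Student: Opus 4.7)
My plan is induction on $i$, combined with Laplace expansion along the last column of $T_i$ and the explicit pathsum formula of Definition~\ref{def:Rformula}.

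For the base case $i=1$, the single entry of $T_1$ is $(wu)_{r,r}$. If $\pi(r)=r$, this entry equals $1$, the product $\Delta_1(w)$ is empty, and $L_{r-1}(w)=(1)$, so both sides equal $1$. If $\pi(r)<r$, the entry is $u_{\pi(r),r}$; unrolling Definition~\ref{def:Rformula} in this situation shows that $D(n_{\pi(r),r})$ consists of every $1_\mu$ sitting above $n_{\pi(r),r}$ in column $r$, that $O(n_{\pi(r),r})$ collects the matching rightmost entries, and that the unique disjoint-path system moves strictly upward through the free variables in column $r$. Hence $u_{\pi(r),r}$ is exactly the product of all free variables of $wn$ in column $r$, which is $\Delta_1(w)$, while $L_{r-1}(w)=(1)$.

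For the inductive step, I would Laplace-expand $\det(T_i)$ along column $r$. That column contains at most one $1$ (at row $\pi^{-1}(r)$, present in $T_i$ iff $\pi^{-1}(r)>r-i$) together with entries $u_{j,r}$ at rows $\pi^{-1}(j)$ for those $j<r$ whose row index lies in $\{r-i+1,\dots,r\}$ and exceeds $\pi^{-1}(r)$. Each cofactor is an $(i-1)\times(i-1)$ determinant that I would identify, after a tracked row/column reordering, with $\det(T_{i-1}')$ for a reduced Weyl element $w'$ obtained from $w$ by deleting column $r$ together with the appropriate row. The inductive hypothesis evaluates each cofactor, and the resulting sum should collapse to $\det(L_{r-i}(w))\cdot\Delta_i(w)$.

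The heart of the argument lies in two kinds of bookkeeping, which I expect to be the main obstacle. On the variable side, the pathsum $P(n_{j,r})$ defining $u_{j,r}$ typically involves paths that exit $S_i$ into rows above $r-i$, so after multiplying by the cofactor and summing over $j$ one must show that contributions from paths leaving $S_i$ cancel and only the monomial $\Delta_i(w)$ survives; this should reduce to partial-pathsum identities proved in~\cite{kim2024pt1}. On the sign side, the cycle structure of $\pi$ restricted to the surviving rows and columns must reconcile $\det(L_{r-i}(w))$ with the inductive sign $\det(L_{r-i+1}(w'))$ produced by the Laplace expansion. An alternative route worth considering is a direct application of the Lindström-Gessel-Viennot lemma: interpret each $u_{a,b}$ as a weighted pathsum in the directed graph of nonzero entries of $wn$ described in Section~\ref{sec:Rformula}, so that $\det(T_i)$ becomes a signed sum over non-intersecting $i$-tuples of such paths, and then argue that all but one surviving tuple cancels, leaving precisely $\det(L_{r-i}(w))\cdot\Delta_i(w)$.
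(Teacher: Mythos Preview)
Your base case is fine, but the inductive step has a genuine gap that is not merely bookkeeping. When you Laplace-expand $\det(T_i)$ along the last column, the $(i-1)\times(i-1)$ cofactors have entries $u_{a,b}^{(w)}=R^{(w)}(n_{a,b})$ for $b<r$, and these rational functions still depend on the free variables $n_{c,r}$ in column~$r$ (whenever the submatrix $M_w(n_{a,b})$ reaches the last column). Hence the cofactor is \emph{not} $\det(T_{i-1}')$ for a reduced Weyl element $w'$: the entries of the minor are computed for $w$, not for $w'$. Your description of the difficulty is also misplaced---the last-column entries $u_{j,r}$ themselves are innocent (by Lemma~\ref{lem:Rrm} they are monomials in column-$r$ variables, so nothing ``exits $S_i$''); the real obstruction is that the interior entries of the cofactor do not decouple from column~$r$.

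The paper's proof avoids this obstacle by inducting on $r$ rather than on $i$ and by splitting into two cases according to whether the bottom row of $T_i$ contains $1_{\pi(r)}$. In the easy case one removes the bottom row, which works because free variables in that row never appear in $R^{(w)}(n_\beta)$ for $n_\beta$ above it (Lemma~\ref{indiceslem}). In the hard case one removes the rightmost column, but the crucial input is Proposition~\ref{thm:rowcolop} from \cite{kim2024pt1}: after row operations one obtains a \emph{single} product $\det(T_i)=(-1)^{s+i}u_{\pi(r-i+s),r}\det(T_i^L)$, where the entries of $T_i^L$ are the modified maps $R_L^{(w)}(n_\alpha)$, and these are related to $R^{(\tilde w)}(n_\alpha)$ by a simple diagonal scaling (Lemma~\ref{RLwRwtilde}). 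This single-term reduction, not a cancellation among many Laplace terms, is the content of the companion paper; your proposal underestimates it as ``partial-pathsum identities'' or ``bookkeeping.'' The LGV alternative is also problematic as stated: each $u_{a,b}$ is already a signed sum over \emph{tuples} of disjoint paths divided by a monomial, so the matrix entries are not single-path weights and LGV does not apply directly to $\det(T_i)$.
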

The proof is given throughout Sections~\ref{sec:parts23case1} and~\ref{sec:parts23case2}.
\begin{cor}[Part $(iii)$ of Theorem~\ref{biratlthm}] 
In the decomposition $wu=x\cdot b$, where $x\in N$ and $b\in B_{-}$, the $i$-th diagonal entry $b_{i,i}$ of $b$ satisfies \eqref{bdiag}.
\end{cor}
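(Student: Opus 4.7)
The plan is to deduce the corollary directly from Lemma~\ref{UDLlem} and Proposition~\ref{detTiprop}, reducing everything to two short bookkeeping computations. Writing $wu = x\cdot b$ with $x\in N$ and $b\in B_{-}$, and further factoring $b = h n_{-}$ with $h$ diagonal and $n_{-}\in N_{-}$ unipotent, we have $b_{i,i} = h_{i,i}$. Applying the first formula of Lemma~\ref{UDLlem} to $g = wu$ gives
\[
  b_{i,i} \;=\; \frac{\det(T_{r-i+1})}{\det(T_{r-i})},
\]
and Proposition~\ref{detTiprop} converts both determinants into the form $\det(L_\bullet(w))\cdot \Delta_\bullet(w)$. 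Thus it remains to evaluate the two ratios $\Delta_{r-i+1}(w)/\Delta_{r-i}(w)$ and $\det(L_{i-1}(w))/\det(L_i(w))$.

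For the monomial part, Definition~\ref{Deltaidef} describes $\Delta_{r-i+1}(w)$ as the product of $n_{a,b}$ over pairs $(a,b)\in \Inv(\pi\i)$ with $a<i\leq b\leq r$, and $\Delta_{r-i}(w)$ as the same product over pairs with $a\leq i < b\leq r$. The symmetric difference of these two index sets consists of pairs $(a,i)\in \Inv(\pi\i)$ with $a<i$ (contributing to the numerator) and pairs $(i,b)\in \Inv(\pi\i)$ with $b>i$ (contributing to the denominator), so
\[
  \frac{\Delta_{r-i+1}(w)}{\Delta_{r-i}(w)} \;=\; \frac{\prod_{(a,i)\in \Inv(\pi\i)} n_{a,i}}{\prod_{(i,b)\in \Inv(\pi\i)} n_{i,b}}.
\]

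For the sign, $L_{i-1}(w)$ is obtained from $w$ by deleting the rows containing $1_1,\ldots,1_{i-1}$ and the columns $1,\ldots,i-1$, so its first column is exactly the column of $1_i$ and has a single nonzero entry sitting in row $r_i := \pi\i(i) - \#\{j<i \mid \pi\i(j)<\pi\i(i)\}$. Cofactor expansion along this column, followed by identification of the minor with $L_i(w)$, yields $\det(L_{i-1}(w))/\det(L_i(w)) = (-1)^{r_i+1}$. To reconcile this with the sign in \eqref{bdiag}, observe that $\prod_{(a,i)\in \Inv(\pi\i)}(-n_{a,i}) = (-1)^c\prod n_{a,i}$ with $c = \#\{a<i \mid \pi\i(a)>\pi\i(i)\} = (i-1)-\#\{j<i\mid \pi\i(j)<\pi\i(i)\}$; a direct parity check then shows
\[
  r_i + 1 \;\equiv\; i + \pi\i(i) + c \pmod 2,
\]
so $(-1)^{r_i+1} = (-1)^{i+\pi\i(i)}(-1)^c$, which matches exactly the sign appearing in \eqref{bdiag}. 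The real work is Proposition~\ref{detTiprop}, carried out in Sections~\ref{sec:parts23case1}--\ref{sec:parts23case2}; once it is in hand, the corollary is the sequence of elementary simplifications above, so I do not anticipate any genuine obstacle at this stage.
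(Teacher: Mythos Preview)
Your proof is correct and follows essentially the same route as the paper: apply Lemma~\ref{UDLlem} and Proposition~\ref{detTiprop}, then compute the ratio $\Delta_{r-i+1}(w)/\Delta_{r-i}(w)$ by comparing index sets and the ratio $\det(L_{i-1}(w))/\det(L_i(w))$ by cofactor expansion along the first column of $L_{i-1}(w)$. Your $r_i$ is the paper's $\pi\i(i)-k$ and your $c$ is the paper's $i-1-k$, so the parity bookkeeping is identical up to notation.
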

\begin{proof}
By Proposition~\ref{detTiprop}, we have
\begin{equation}\label{bii}
    b_{i,i}=\frac{\det(T_{r-i+1})}{\det(T_{r-i})}=\frac{\det\left(L_{i-1}(w)\right)}{\det\left(L_{i}(w)\right)}\cdot\frac{\Delta_{r-i+1}(w)}{\Delta_{r-i}(w)}.
\end{equation}
By Definition~\ref{Deltaidef}, we have
\[\Delta_{r-i+1}(w)=\prod\limits_{\substack{1\leq a<i\leq r \\ (a,i)\in \Inv(\pi\i)}}n_{a,i}\cdot\prod\limits_{\substack{1\leq a<i< b\leq r \\ (a,b)\in \Inv(\pi\i)}}n_{a,b}\]
and
\[\Delta_{r-i}(w)=\prod\limits_{\substack{1\leq a<i+1\leq b\leq r \\ (a,b)\in \Inv(\pi\i)}}n_{a,b}=\prod\limits_{\substack{1\leq a<i< b\leq r \\ (a,b)\in \Inv(\pi\i)}}n_{a,b}\cdot\prod\limits_{\substack{1\leq i< b\leq r \\ (i,b)\in \Inv(\pi\i)}}n_{i,b},
\]
thus
\begin{equation}\label{deltaratio}
    \frac{\Delta_{r-i+1}(w)}{\Delta_{r-i}(w)}=\frac{\prod_{ (a,i)\in \Inv(\pi\i)}n_{a,i}}{\prod_{(i,b)\in \Inv(\pi\i)}n_{i,b}}.
\end{equation}
Recall Definition~\ref{def:Lw}. After removing the rows and columns which contain the entries $1_1$, $1_2$, $\ldots$ , $1_{i-1}$, the matrix $1_i$ is in the $(\pi\i(i)-k,1)$-entry of $L_{i-1}(w)$, where $k$ is the number of entries $1_a$, $1\leq a<i-1$ which are above $1_i$. This implies
\begin{equation}\label{Liratio}
    L_{i-1}(w)=(-1)^{\pi\i(i)-k+1}L_{i}(w).
\end{equation}
Observe that the number $k$ equals
\begin{equation}\label{rewritek}
\begin{aligned}
  i-1-\abs{\{1\leq a<i\mid \pi\i(a)>\pi\i(i)\}}=i-1-\abs{\{1\leq a<i\mid (a,i)\in \Inv(\pi\i)\}}.   
\end{aligned}
\end{equation}
Applying \eqref{deltaratio}, \eqref{Liratio} and \eqref{rewritek} to \eqref{bii} yields \eqref{bdiag}.
\end{proof}
For the determinant of $F_i$, we introduce the notion of the level of a free variable $n_{\alpha}\in V_w$.
\begin{definition}\label{def:hnlevel}
For $1\leq a,b\leq r$, we let $h(a,b)$ denote the number of entries of $1$ in $wn$ which are below the $a$-th row and to the right of the $b$-th column, that is,
\[
    h(a,b)=\abs{\{1_j\in wn \mid j>b, \ \pi^{-1}(j)>a\}}.
\]
\end{definition}
If $n_{a,b}\in V_w$, then $h\bigl(\pi\i(a),b\bigr)$ equals the number of entries of $1$ which are below and to the right of $n_{a,b}$.
\begin{definition}\label{def:normdef}
For $n_{a,b}\in V_w$, we define the ``level" of $n_{a,b}$, denoted by $\norm{n_{a,b}}$, as
\[
    \norm{n_{a,b}}=\pi^{-1}(a)+b+h(\pi\i(a),b)-r.\]
\end{definition}
The level tells us how close a free variable $n_{a,b}$ is from the bottom right corner. See (3.43) in \cite{kimWhit} for an example that illustrates the level of free variables in $wn$.
\begin{definition}\label{def:bwi}
We let $B_{w}(i)\subseteq V_w$ denote the set of free variables of level $i$.
\end{definition}
The following lemma shows that $B_w(i)$ with $2\leq i\leq r$ partition $V_w$.
\begin{lem}\label{levelpartlem}
For every $n_{a,b}\in V_w$, we have $2\leq \norm{n_{a,b}}\leq \min\left(\pi\i (a),b\right)\leq r$.
\end{lem}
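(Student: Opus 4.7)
The plan is to reinterpret $\norm{n_{a,b}}$ as a counting quantity by partitioning the $r$ entries $1_j$ of $wn$ into four quadrants relative to the position of $n_{a,b}$, and to exploit the fact that there is exactly one $1_j$ in every row and every column.

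For $n_{a,b}\in V_w$, classify each $1_j$ ($1\leq j\leq r$) by whether $j\leq b$ or $j>b$ and whether $\pi\i(j)\leq \pi\i(a)$ or $\pi\i(j)>\pi\i(a)$. Let $A_1,A_2,A_3,A_4$ denote the cardinalities of the four resulting subsets, with $A_1$ the ``top-left'' count ($j\leq b$ and $\pi\i(j)\leq \pi\i(a)$) and $A_4$ the ``bottom-right'' count, which by Definition~\ref{def:hnlevel} is exactly $h(\pi\i(a),b)$. Since there is exactly one $1_j$ in each of the first $b$ columns and in each of the first $\pi\i(a)$ rows, one has
\[
A_1+A_3 = b, \qquad A_1+A_2 = \pi\i(a), \qquad A_1+A_2+A_3+A_4 = r.
\]
Adding the first two identities and subtracting the third gives
\[
\pi\i(a)+b+h(\pi\i(a),b) - r = A_1,
\]
so $\norm{n_{a,b}}=A_1$.

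With this identification the upper bound is immediate: $A_1\leq A_1+A_3 = b$ and $A_1\leq A_1+A_2 = \pi\i(a)$, and both $b$ and $\pi\i(a)$ are at most $r$. For the lower bound, observe that the entries $1_a$ and $1_b$ both fall in the top-left quadrant: indeed $1_a$ has column index $a\leq b$ (since $(a,b)\in \Inv(\pi\i)$ forces $a<b$) and row index $\pi\i(a)\leq \pi\i(a)$, while $1_b$ has column index $b\leq b$ and row index $\pi\i(b)<\pi\i(a)$ (again from $(a,b)\in\Inv(\pi\i)$). As $a\neq b$, these are distinct, giving $A_1\geq 2$.

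The argument is essentially a bookkeeping one; the only step that needs care is recognizing that Definition~\ref{def:normdef} can be rewritten as the top-left quadrant count $A_1$, after which the two bounds follow from elementary inequalities on the quadrant cardinalities. No deeper combinatorial input is needed, so I do not anticipate any serious obstacle beyond correctly setting up the partition.
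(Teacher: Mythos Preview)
Your proof is correct and uses essentially the same quadrant-counting argument as the paper. The paper bounds $h(\pi\i(a),b)=A_4$ directly and then translates to bounds on $\norm{n_{a,b}}$, whereas your identification $\norm{n_{a,b}}=A_1$ makes the lower bound (via $1_a,1_b\in$ top-left) slightly cleaner, but the underlying combinatorics is identical.
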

\begin{proof}
Consider the submatrix of $wn$ of size $(r-\pi\i(a))\times (r-b)$ whose rows consist of the $(\pi\i(a)+1)$-th through the $r$-th rows of $wn$ and columns consist of the $(b+1)$-th through the $r$-th columns of $wn$. Since there is at most one $1_j$ in each row or each column of the submatrix, we have $h(\pi\i(a),b)\leq \min(r-\pi\i (a),r-b)=r-\max(\pi\i (a),b)$. On the other hand, observe that there are $(\pi\i(a)-1)$ entries of $1$ above the row containing $n_{a,b}$, and $b-1$ entries of $1$ to the left of the column that contains $n_{a,b}$. All other entries of $1$ must be in the submatrix of size $(r-\pi\i(a))\times (r-b)$ defined above, hence $h(\pi\i(a),b)\geq r-(\pi\i(a)-1+b-1)$. The lemma follows.
\end{proof}
By Lemma~\ref{UDLlem}, the formula for $x_{i,i+1}$ in part (ii) of Theorem~\ref{biratlthm} is equivalent to the following proposition.
\begin{prop}\label{supdiagprop} For $1\leq i\leq r-1$, we have
\begin{equation}\label{supdiagformula}\frac{\det\left(F_{r-i}\right)}{\det\left(T_{r-i}\right)}=\sum_{n_{a,b}\in B_w(i+1)} \frac{1}{n_{a,b}}.
\end{equation}
\end{prop}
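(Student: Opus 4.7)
The plan is to establish the companion determinant identity
\[
  \det(F_{r-i}) \;=\; \det\bigl(L_i(w)\bigr)\cdot \Delta_{r-i}(w)\cdot \sum_{n_{a,b}\in B_w(i+1)} \frac{1}{n_{a,b}},
\]
in parallel with Proposition~\ref{detTiprop}. Dividing the two identities immediately yields Proposition~\ref{supdiagprop}, since the common factor $\det(L_i(w))\,\Delta_{r-i}(w)$ cancels. Note that simply invoking Lemma~\ref{UDLlem} and multilinearity against the $UDL$ decomposition $wu=xb$ only reproduces $\det(F_{r-i})=x_{i,i+1}\det(T_{r-i})$, which is tautological; so the real content must be extracted by a direct computation of one of the two determinants.

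The two matrices $T_{r-i}$ and $F_{r-i}$ differ only in their top row: $T_{r-i}$ contains row $i+1$ of $wu$, while $F_{r-i}$ contains row $i$ of $wu$. I would expand $\det(F_{r-i})$ by the Leibniz formula and substitute the path-sum formula \eqref{defR} for every factor $u_{a,b}$ appearing in a surviving permutation term. Each term then becomes a sum indexed by tuples of disjoint path systems on $wn$ (in the sense of Section~\ref{sec:Rformula}), normalized by the denominators $\prod_{1_\mu\in D(n_{a,b})}\rho(1_\mu)$ in Definition~\ref{def:Rformula}. The goal is to show that, after cancellations, the surviving configurations are in bijection with level-$(i+1)$ free variables $n_{a,b}\in B_w(i+1)$, and that each such variable contributes the factor $\frac{1}{n_{a,b}}\det\bigl(L_i(w)\bigr)\Delta_{r-i}(w)$.

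I expect the argument to split according to the decomposition $R(n_\alpha) = R_L(n_\alpha)+R_1(n_\alpha)+R_2(n_\alpha)$ of Definition~\ref{def:RLR1R2}. Geometrically, the level $\|n_{a,b}\|$ from Definition~\ref{def:normdef} measures how close $n_{a,b}$ lies to the bottom-right corner of $wn$, and a moment's reflection suggests that only level-$(i+1)$ variables admit path systems that ``reach'' across the modified top row of $F_{r-i}$ to produce a nonvanishing contribution; variables of higher level produce paths that are forced to collide with the lower rows and cancel in pairs via disjointness, while variables of lower level do not have enough room in the submatrix. The case split between Sections~\ref{sec:parts23case1} and~\ref{sec:parts23case2} presumably reflects whether the relevant level-$(i+1)$ variable lies in the rightmost column of $wn$ or elsewhere, mirroring the split between $R_L$ and $R_1+R_2$.

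The main obstacle will be the combinatorial bookkeeping: tracking the signs coming from $L_i(w)$ and from individual path systems, and verifying the massive cancellations that reduce the full Leibniz expansion to exactly $\sum_{n_{a,b}\in B_w(i+1)} 1/n_{a,b}$ times $\det(T_{r-i})$. The auxiliary lemmas on disjoint paths developed in \cite{kim2024pt1}, together with the subset structure $O_1,D_1,O_{\uparrow,k},D_{\uparrow,k}$ from Definitions~\ref{O1}--\ref{ODup}, should be the key tools for carrying out these cancellations, explaining why the proof is spread over the two subsequent sections rather than given in a single shot.
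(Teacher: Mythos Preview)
Your proposal takes a fundamentally different route from the paper, and your description of what Sections~\ref{sec:parts23case1} and~\ref{sec:parts23case2} do is inaccurate. The paper does \emph{not} expand $\det(F_{r-i})$ by the Leibniz formula and then substitute the path-sum expression \eqref{defR} for each $u_{a,b}$. Instead it proves Proposition~\ref{supdiagprop} by induction on $r$: one passes from $w\in\GL(r)$ to a Weyl element of $\GL(r-1)$ by removing either the bottom row (Case~1) or the rightmost column (Case~2) of $wn$, and shows that the ratio $\det(F_{r-i})/\det(T_{r-i})$ transforms in a controlled way under this reduction. The case split is governed by whether the entry $1_{\pi(r)}$ lies in the bottom row of $T_{r-i}$ --- equivalently whether $i\le\pi(r)-1$ or $i\ge\pi(r)$ --- not by whether the relevant level-$(i+1)$ variable lies in the rightmost column. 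In Case~1 a simple cofactor expansion along $1_{\pi(r)}$ together with Lemma~\ref{lem:whatlevel} reduces \eqref{supdiagformula} for $w$ to the same statement for $\what$; in Case~2 the reduction to $\wtilde$ goes through Proposition~\ref{thm:rowcolop} and Lemma~\ref{LTdiag} (imported from \cite{kim2024pt1}), which produce the extra summand $1/n_{\pi(i+1),r}$ exactly when $i+1>\pi\i(r)$, matching Corollary~\ref{cor:levelset}.

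Your direct-expansion strategy is not obviously wrong, but as you yourself note, the ``massive cancellations'' are the entire content, and you have given no mechanism for carrying them out. The decomposition $R=R_L+R_1+R_2$ and the subsets $O_1,D_1,O_{\uparrow,k},D_{\uparrow,k}$ are indeed used in the paper, but for a different purpose (the column-removal step in Case~2 via \cite{kim2024pt1}), not for organizing a global Leibniz expansion of $\det(F_{r-i})$. The inductive approach sidesteps the combinatorial explosion entirely: at each step one only has to track a single extra term $1/n_{\pi(i+1),r}$, rather than a bijection between surviving path configurations and $B_w(i+1)$.
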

The proof is given throughout Sections~\ref{sec:parts23case1} and~\ref{sec:parts23case2}.
\begin{lem}\label{lem:basecase}
The Weyl group element $w=\left(
\begin{smallarray}{cc}
 0 & 1 \\
 1 & 0 \\
\end{smallarray}
\right)$ satisfy Propositions~ \ref{detTiprop} and~\ref{supdiagprop}. 
\end{lem}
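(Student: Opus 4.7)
The plan is a direct verification from the definitions, since for $r=2$ there is only one nontrivial Weyl element and a single free variable. This lemma is intended as the starting point for the inductive arguments carried out in Sections~\ref{sec:parts23case1} and~\ref{sec:parts23case2}, so no heavy machinery is needed; the proof is pure bookkeeping.

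First, I would unpack the combinatorial data: the permutation attached to $w$ is $\pi(1)=2,\pi(2)=1$, so $\Inv(\pi\i)=\{(1,2)\}$ and $V_w=\{n_{1,2}\}$, and the matrix $wn$ has $1_2$ at position $(1,2)$, $1_1$ at $(2,1)$, and $n_{1,2}$ at $(2,2)$. Then I would compute $R(n_{1,2})$ from Definition~\ref{def:Rformula}: the submatrix $M_w(n_{1,2})$ is the right-hand column of $wn$, so $D(n_{1,2})=\{1_2\}$ and $O(n_{1,2})=\{n_{1,2}\}$, and the unique element of $\mathcal{P}(n_{1,2})$ is the vertical path from $n_{1,2}$ up to $1_2$, yielding $P(n_{1,2})=n_{1,2}$. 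Since row $1$ of $wn$ reads $(0,1_2)$, one also has $\rho(1_2)=1$, and with $t_{\alpha}=0$ one obtains $u_{1,2}=R(n_{1,2})=n_{1,2}$.

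For Proposition~\ref{detTiprop} the only relevant case is $i=1$: the block $T_1$ is the entry $(wu)_{2,2}=n_{1,2}$, so $\det T_1=n_{1,2}$. Deleting the row and column of $1_1$ from $w$ leaves $L_1(w)=(1)$, and $\Delta_1(w)=n_{1,2}$ by Definition~\ref{Deltaidef}, so $\det T_1=\det L_1(w)\cdot \Delta_1(w)$ holds on the nose. For Proposition~\ref{supdiagprop}, again with $i=1$, the block $F_1$ is the single entry $(wu)_{1,2}=1$, so $\det F_1/\det T_1=1/n_{1,2}$. On the other side, $h(2,2)=0$ gives $\norm{n_{1,2}}=\pi\i(1)+2+h(2,2)-r=2$, hence $B_w(2)=\{n_{1,2}\}$ and the right-hand side of~\eqref{supdiagformula} is also $1/n_{1,2}$.

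There is no genuine obstacle: every step reduces to inspection of a single $2\times 2$ matrix together with a one-line application of the corresponding definition.
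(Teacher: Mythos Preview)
Your proof is correct and follows the same approach as the paper: a direct verification from the definitions, computing $u_{1,2}=n_{1,2}$, $T_1=(n_{1,2})$, $F_1=(1)$, $\Delta_1(w)=n_{1,2}$, $L_1(w)=(1)$, and $\norm{n_{1,2}}=2$. Your write-up is simply more explicit than the paper's, which states these computations and leaves the checks to the reader.
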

\begin{proof}
 By \eqref{defR}, we have $u_{1,2}=n_{1,2}$, $wu=\Bigl(
\begin{smallarray}{cc}
 0 & 1 \\
 1 & n_{1,2} \\
\end{smallarray}
\Bigr)$, $T_1=(n_{1,2})$, and $F_1=(1)$.
Check that $\Delta_1(w)=n_{1,2}$, $L_1(w)=(1)$, and $\norm{n_{1,2}}=2$. The lemma follows.
\end{proof}
Using the above lemma as the base case, we use induction on $r$ to prove Propositions~\ref{detTiprop} and~\ref{supdiagprop}, assuming that they hold for every Weyl group element of $\GL(r')$ with $2\leq r'<r$.
\section{Parts (\textnormal{ii}) and (\textnormal{iii}) case 1: The bottom row of $T_i$ contains $1$}\label{sec:parts23case1}
In this section and the next, we prove Propositions~\ref{detTiprop} and~\ref{supdiagprop}. As discussed earlier, this proves parts (ii) and (iii) of Theorem~\ref{biratlthm}. We divide the proof into two cases. By Definition~\ref{defTi}, the bottom row of $T_i$ and $F_i$ contains $1_{\pi(r)}$ if $r-\pi(r)+1\leq i$. If $T_i$ and $F_i$ contain $1_{\pi(r)}$, the induction step proceeds by removing the bottom row of $T_i$ and $F_i$. If they do not contain $1_{\pi(r)}$, the induction step involves removing the rightmost column of $T_i$ and $F_i$, instead. \par 
The analysis of the former case is simpler because a free variable $n_\alpha$ in the bottom row does not appear in the birational map $R(n_\b)$ for any free variable $n_\b$ above the bottom row. This independence reduces the complexity of the analysis. The first paper \cite{kim2024pt1} of this series is devoted to the analysis of the latter case. In this section, we focus on the former case. Throughout this section, we fix a Weyl group element $w\in\Omega$ of $\GL(r)$.
\subsection{Change of indices after removing the bottom row}\label{sec:coirow}
Let $\widehat{w}$ denote the matrix obtained from $w$ by removing the bottom row and the $\pi(r)$-th column of $w$. Let $\widehat{\varphi}$ be the bijection from $\{1,\ldots,r\}\smallsetminus \{\pi(r)\}$ to $\{1,\ldots,r-1\}$ defined by 
\begin{equation}\label{phihat}
\widehat{\varphi}(i)=\begin{cases}
i & \text{if}\quad i<\pi(r) \\
i-1 & \text{if}\quad i>\pi(r),
\end{cases}
\quad
\widehat{\varphi}\i(i)=\begin{cases}
i & \text{if}\quad i<\pi(r) \\
i+1 & \text{if}\quad i\geq\pi(r).
\end{cases}\end{equation}
Check that the permutation $\widehat{\pi}\in \mathfrak{S}_{r-1}$ that corresponds to $\widehat{w}$ is given by
\begin{equation}\label{pihat}
\widehat{\pi}(i)=(\widehat{\varphi}\circ \pi)(i)=
\begin{cases}
\pi(i) & \text{if}\quad \pi(i)<\pi(r) \\
\pi(i)-1 & \text{if}\quad \pi(i)>\pi(r)
\end{cases}\end{equation}
for $1\leq i\leq r-1$.
\begin{definition}\label{def:wnwuhat}
    We let $\widehat{wn}$ and $\widehat{wu}$ denote the submatrices of $wn$ and $wu=R(wn)$ of size $(r-1)\times (r-1)$ obtained by removing the bottom row and the $\pi(r)$-th column of $wn$ and $wu$, respectively.
\end{definition}
\begin{definition}\label{def:whatnhatuhat}
We let $\what\nhat$ be the general matrix of $\what \bar{N}^{\what\i}$, where $\bar{N}\subset \GL(r-1)$ is the subgroup of unipotent upper triangular matrices, and $\nhat\in \bar{N}^{\what\i}$ is the unipotent upper triangular matrix with free variables $n_{\ahat,\bhat} \in V_{\what}$, as defined in \eqref{coords}. We let $\what\uhat$ be the matrix obtained from $\what \nhat$ by replacing the free variables $n_{\ahat,\bhat}$ in $\what \nhat$ with $u_{\ahat,\bhat}=R^{(\what)}(n_{\ahat,\bhat})$.
\end{definition}
The matrix $\widehat{wn}$ contains all of the free variables above the bottom row of $wn$. We extend the map $\phihat$ to the entries $n_{a,b}\in V_w$ above the bottom row of $wn$ by
\begin{equation}\label{phihatn}
    \widehat{\varphi}(n_{a,b})=n_{\widehat{\varphi}(a),\widehat{\varphi}(b)}=\begin{cases}
n_{a,b} & \text{if}\quad \pi(r)>b>a, \\
n_{a,b-1} &\text{if}\quad b>\pi(r)>a, \\
n_{a-1,b-1} &\text{if}\quad b>a>\pi(r).
\end{cases}
\end{equation}
Consider the example \eqref{ex:w3}. We have
\begin{equation}\label{ex:indiceslem}
 \widehat{w}=\left(
\begin{smallarray}{cccc}
 0 & 1 & 0 & 0 \\
 0 & 0 & 0 & 1 \\
 0 & 0 & 1 & 0 \\
 1 & 0 &0 & 0 \\
\end{smallarray}
\right),\quad \widehat{wn}=\left(
\begin{smallarray}{ccccc}
 0 & 1 & 0 & 0 \\
 0 & 0 & 0 & 1 \\
 0 & 0 & 1 & n_{4,5} \\
 1 & n_{1,2} & n_{1,4} & n_{1,5} \\
\end{smallarray}
\right), \quad \text{and}\quad \widehat{w}\nhat=\left(
\begin{smallarray}{cccc}
 0 & 1 & 0 & 0 \\
 0 & 0 & 0 & 1 \\
 0 & 0 & 1 & n_{3,4} \\
 1 & n_{1,2} & n_{1,3} & n_{1,4} \\
\end{smallarray}
\right).\end{equation}
Check that $\phihat$ maps $n_{1,2},n_{1,4},n_{1,5},n_{4,5}\in V_w$ to $n_{1,2},n_{1,3},n_{1,4},n_{3,4}\in V_{\what}$, respectively. Thus applying $\phihat$ to each free variables in $\widehat{wn}$ gives $\widehat{w}\nhat$. Accordingly, we further extend the map $\phihat$ elementwise to sets and matrices. For a set $A$ consisting of entries of $wn$ above the bottom row of $wn$, we define $\phihat(A)=\{\phihat (n_{a,b}) \mid n_{a,b}\in A\}$. Similarly, if $M$ is a submatrix of $wn$ which does not contain the bottom row and the $\pi(r)$-th column of $wn$, we let $\phihat(M)$ be the matrix of the same size which satisfies
\begin{equation}\label{phihatmat}
\left(\phihat(M)\right)_{i,j}=\begin{cases}
    (M)_{i,j} & \text{if } M_{i,j}=0 \text{ or } 1 \\
    \phihat(n_{\alpha}) & \text{if } M_{i,j}=n_{\alpha}\in V_w.
\end{cases}
\end{equation}
We also extend the map $\phihat$ to rational functions of the free variables in $V_w$ above the bottom row of $wn$ in an obvious way: we substitute every variable $n_{a,b}$ in the expression into $\widehat{\varphi}(n_{a,b})$. \par
If a free variable $n_{a,b}\in V_w$ is above the bottom row of $wn$, then by \eqref{defR}, none of the free variables in the bottom row appear in the rational function $R(n_{a,b})$, hence $(\phihat\circ R)(n_{a,b})$ is well defined. Therefore, we can further extend the map $\phihat$ entrywise to submatrices of $wu$. If $M$ is a submatrix of $wu$ which does not contain the bottom row and the $\pi(r)$-th column of $wu$, we define $\phihat(M)$ by
\begin{equation}\label{phihatmat1}
\left(\phihat(M)\right)_{i,j}=\begin{cases}
    (M)_{i,j} & \text{if } M_{i,j}=0 \text{ or } 1 \\
    \phihat(u_{\alpha}) & \text{if } M_{i,j}=u_{\alpha}=R(n_{\alpha}) \text{ for some } n_\alpha\in V_w.
\end{cases}
\end{equation}
To make a distinction between $w$ and $\what$, we use superscripts $(w)$ and $(\what)$ to previously defined notions. For example, we write $R(n_\a)=R^{(w)}(n_\a)$ if $n_\a$ is an element of $V_w$.
\begin{lem}\label{indiceslem}
We have
$\widehat{\varphi}(\widehat{wn})=\widehat{w}\nhat$, and
$V_{\widehat{w}}=\{\phihat(n_{a,b})\mid  n_{a,b}\in V_w, \, \pi\i(a)<r\}$.
Furthermore, $\widehat{\varphi}$ satisfies
$(\widehat{\varphi}\circ R^{(w)})(n_{a,b})=(R^{(\widehat{w})}\circ \widehat{\varphi})(n_{a,b})$
for any free variable $n_{a,b}\in V_w$ above the bottom row of $wn$. We have $\widehat{\varphi}(\widehat{wu})=\widehat{w}\uhat$.
\end{lem}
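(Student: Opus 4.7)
The plan is to verify the four assertions by unwinding the definitions of $\widehat{\varphi}$ and $\widehat{\pi}$, with the key combinatorial input being that the only column of $wn$ removed in forming $\widehat{wn}$ is the $\pi(r)$-th column, which is identically zero above the bottom row. To see this, note that for any $a'$ with $\pi\i(a') < r$, the $(\pi\i(a'),\pi(r))$ entry of $wn$ cannot be a free variable: membership in $\Inv(\pi\i)$ would force $\pi\i(a') > \pi\i(\pi(r)) = r$. In particular no free variable $n_{a,b}\in V_w$ has $b=\pi(r)$, and the hypothesis $\pi\i(a)<r$ additionally guarantees $a\neq\pi(r)$.

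I would first handle the two preliminary identities $\widehat{\varphi}(\widehat{wn})=\widehat{w}\nhat$ and $V_{\widehat{w}}=\{\widehat{\varphi}(n_{a,b})\mid n_{a,b}\in V_w,\ \pi\i(a)<r\}$. From $\widehat{\pi}(i)=\widehat{\varphi}(\pi(i))$ in \eqref{pihat} one extracts $\widehat{\pi}\i(\widehat{\varphi}(j))=\pi\i(j)$ for every $j\neq\pi(r)$, which places the 1s of $\widehat{w}\nhat$ in exactly the positions occupied by the images of the 1s of $\widehat{wn}$ under $\widehat{\varphi}$. Since $\widehat{\varphi}$ is monotone on its domain, combining this with the opening observation yields the equivalence $(a,b)\in\Inv(\pi\i)$ with $\pi\i(a)<r$ if and only if $(\widehat{\varphi}(a),\widehat{\varphi}(b))\in\Inv(\widehat{\pi}\i)$, which is the description of $V_{\widehat{w}}$. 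The matrix identity $\widehat{\varphi}(\widehat{wn})=\widehat{w}\nhat$ then follows because the free variable at position $(\widehat{\pi}\i(\widehat{\varphi}(a)),\widehat{\varphi}(b))=(\pi\i(a),\widehat{\varphi}(b))$ in $\widehat{w}\nhat$ is exactly $n_{\widehat{\varphi}(a),\widehat{\varphi}(b)}=\widehat{\varphi}(n_{a,b})$.

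The main step is the intertwining relation $(\widehat{\varphi}\circ R^{(w)})(n_{a,b})=(R^{(\widehat{w})}\circ\widehat{\varphi})(n_{a,b})$ for $n_{a,b}$ above the bottom row. Because $\pi\i(a)<r$, the submatrix $M_w(n_{a,b})$ lies entirely above the bottom row, and by the opening observation its $\pi(r)$-th column (whenever present) is identically zero. Hence $\widehat{\varphi}$ carries $M_w(n_{a,b})$ bijectively onto $M_{\widehat{w}}(\widehat{\varphi}(n_{a,b}))$ while preserving every nonzero entry and the neighbor relation underlying rectilinear paths. This forces the correspondences $D(n_{a,b})\leftrightarrow D(\widehat{\varphi}(n_{a,b}))$, $O(n_{a,b})\leftrightarrow O(\widehat{\varphi}(n_{a,b}))$, and $\mathcal{P}(n_{a,b})\leftrightarrow\mathcal{P}(\widehat{\varphi}(n_{a,b}))$, so that $\widehat{\varphi}(P^{(w)}(n_{a,b}))=P^{(\widehat{w})}(\widehat{\varphi}(n_{a,b}))$ and the exponent $\abs{D(n_{a,b})}-1$ is the same in both settings. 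The identical reasoning applied to each row containing a destination $1_\mu$ — none of whose nonzero entries can sit in the $\pi(r)$-th column — yields $\widehat{\varphi}(\rho^{(w)}(1_\mu))=\rho^{(\widehat{w})}(1_{\widehat{\varphi}(\mu)})$, and substituting into \eqref{defR} delivers the intertwining.

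The final identity $\widehat{\varphi}(\widehat{wu})=\widehat{w}\uhat$ is then an immediate consequence: every entry of $\widehat{wu}$ is $0$, $1$, or $u_\alpha=R^{(w)}(n_\alpha)$ for some $n_\alpha\in V_w$ above the bottom row, and the intertwining carries $u_\alpha$ to $R^{(\widehat{w})}(\widehat{\varphi}(n_\alpha))$, which by the description of $V_{\widehat{w}}$ is the corresponding entry of $\widehat{w}\uhat$. The only real obstacle is the systematic bookkeeping in the third paragraph: one must confirm that every combinatorial ingredient in the formula for $R^{(w)}(n_{a,b})$ — the submatrix $M_w(n_{a,b})$, its destinations and origins, its rectilinear paths, and the row products $\rho(1_\mu)$ — is insensitive to deleting the $\pi(r)$-th column, and this rests entirely on the single observation that that column vanishes above the bottom row.
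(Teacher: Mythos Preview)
Your proposal is correct and follows essentially the same approach as the paper: the key observation in both is that the $\pi(r)$-th column of $wn$ is identically zero above the bottom row, so $M_w(n_{a,b})$ and $M_{\widehat{w}}(\widehat{\varphi}(n_{a,b}))$ agree up to a zero column, from which the correspondences between destinations, origins, paths, and row products are immediate. The paper compresses all of this into one line (``are either identical or only differ by a zero column \ldots\ follow directly from this fact''), whereas you have spelled out the bookkeeping explicitly.
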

\begin{proof}
The statements on $\what\nhat$ and $V_{\what}$ are straightforward. Check that if $n_\a\in V_w$ is above the bottom row of $wn$, then the matrices $\phihat(M_{w} (n_\a))$ and $M_{\what}(\phihat(n_\a))$ are either identical or only differ by a zero column, with the latter occurring when $M_{w} (n_\a)$ contains the $\pi(r)$-th column of $wn$. The statements on $R^{(\what)}$ and $\what \uhat$ follow directly from this fact.
\end{proof}
Recall Definition~\ref{defTi}. We use the notation $T_i(w)$ and $F_i(w)$ to specify that they are the submatrices of $wu$. Similarly, $T_j(\what)$ and $F_j(\what)$ denote submarices of $\what\uhat$.
\begin{definition}\label{def:Tihat}
    For $r-\pi(r)+1\leq i\leq r-1$, we let $\widehat{T}_{i}(w)$ be the matrix obtained from $T_i(w)$ by removing the bottom row and the column which contains $1_{\pi(r)}$. Similarly, we let $\widehat{F}_{i}(w)$ be the matrix obtained from $F_i(w)$ by removing the bottom row and the column which contains $1_{\pi(r)}$.
\end{definition}
By Definition~\ref{def:wnwuhat}, $\widehat{T}_{i}(w)$ and $\widehat{F}_{i}(w)$ are submatrices of $\widehat{wu}$.
\begin{definition}\label{xkdef}
Let $1\leq i\leq r-1$. For $r-i\leq k\leq r$, we let $x_k$ denote the row vector of length $i$ which consists of the last $i$ entries of the $k$-th row of $wu$, that is, $x_k=(wu)_{k,r-i+1\leq j\leq r}$. For $r-i+1\leq j\leq r$, write $x_k(j)=(wu)_{k,j}=u_{\pi(k),j}$.
\end{definition}
Check that $T_i$ consist of the rows $x_{r-i+1},x_{r-i+2},\ldots,x_r$, and $F_i$ consists of the rows $x_{r-i},x_{r-i+2},x_{r-i+3},\ldots,x_r$.
\begin{definition}\label{def:xkhat}
Let $r-\pi(r)+1\leq i\leq r-1$. For $r-i\leq k\leq r-1$, we let $\widehat{x}_k$ denote the row vector of length $(i-1)$ obtained by removing the zero entry $x_k\bigl(\pi(r)\bigr)$ from the row $x_k$.
\end{definition}
Check that $\widehat{T}_{i}(w)$ consists of the rows $\widehat{x}_{r-i+1},\widehat{x}_{r-i+2},\ldots,\widehat{x}_{r-1}$, and $F_i$ consists of the rows $\widehat{x}_{r-i},\widehat{x}_{r-i+2},\widehat{x}_{r-i+3},\ldots,\widehat{x}_{r-1}$.
\begin{lem}\label{lem:TiFihatw}
For any $r-\pi(r)+1\leq i \leq r-1$, applying $\phihat$ to the submatrices $\widehat{T}_{i}(w)$ and $\widehat{F}_{i}(w)$ of $\widehat{wu}$ gives the submatrices $T_{i-1}(\what)$ and $F_{i-1}(\what)$ of $\what\uhat$, respectively.
\end{lem}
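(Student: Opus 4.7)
The plan is to interpret this as an essentially combinatorial bookkeeping statement: one must track which row and column indices of $wu$ survive after deleting the bottom row and the $\pi(r)$-th column, and then see how the bijection $\phihat$ realigns them with the row and column indices of $\what\uhat$. The content of the lemma is already prepared by Lemma~\ref{indiceslem}, which gives the elementwise identity $\phihat(\widehat{wu})=\what\uhat$; once that is in hand, the lemma reduces to showing that the right index sets are picked out.

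First I would identify $\widehat{T}_i(w)$ as a submatrix of $\widehat{wu}$. By Definitions~\ref{defTi}, \ref{xkdef}, \ref{def:xkhat}, and \ref{def:Tihat}, it consists of the rows indexed $r-i+1,\ldots,r-1$ of $\widehat{wu}$ and the columns indexed by the set $C=\{r-i+1,\ldots,r\}\smallsetminus\{\pi(r)\}$ of $\widehat{wu}$. The hypothesis $r-\pi(r)+1\le i$ is equivalent to $\pi(r)\ge r-i+1$, so $\pi(r)\in C^c\cap\{r-i+1,\dots,r\}$ is a legitimate column to delete, and the bijection $\phihat$ restricted to $C$ has image
\[
\phihat(C)=\{r-i+1,\ldots,\pi(r)-1\}\cup\{\pi(r),\ldots,r-1\}=\{r-i+1,\ldots,r-1\}.
\]
Meanwhile the rows $r-i+1,\ldots,r-1$ are strictly above the bottom row of $wu$, so they remain the rows $r-i+1,\ldots,r-1$ of $\what\uhat$.

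Next, I would invoke Lemma~\ref{indiceslem}, which gives $\phihat(\widehat{wu})=\what\uhat$ entrywise, together with the general rule~\eqref{phihatmat1}. Combining this with the index computation above, $\phihat(\widehat{T}_i(w))$ is precisely the lower-right $(i-1)\times(i-1)$ submatrix of $\what\uhat$; by Definition~\ref{defSi} and Definition~\ref{defTi} applied to $\what$, this is $T_{i-1}(\what)$. The same argument handles $\widehat{F}_i(w)$: it consists of rows $r-i,\,r-i+2,\,r-i+3,\ldots,r-1$ of $\widehat{wu}$ and columns $C$; applying $\phihat$ yields rows $r-i,\,r-i+2,\,r-i+3,\ldots,r-1$ and columns $r-i+1,\ldots,r-1$ of $\what\uhat$, and inspection of Definition~\ref{defEi} and Definition~\ref{defTi} shows that this matches $F_{i-1}(\what)$ exactly (the $(r-1)-(i-1)=r-i$ row together with rows $(r-1)-(i-1)+2,\ldots,r-1=r-i+2,\ldots,r-1$, and columns $(r-1)-(i-1)+1,\ldots,r-1$).

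There is no real obstacle; the lemma is a direct translation once one is careful about the shift between column indices before and after $\phihat$. The only point that deserves care is verifying that the inequality $\pi(r)\ge r-i+1$ is precisely what is needed to ensure that $\pi(r)$ falls inside the range of columns being considered, so that deleting it is a well-defined operation that lines up with the index shift of $\phihat$.
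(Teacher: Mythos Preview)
Your argument is correct and is exactly the index-tracking that the paper has in mind: the paper itself simply says the lemma is an immediate consequence of Lemma~\ref{indiceslem} and omits the proof, and your proposal is a careful unpacking of that sentence. The only minor imprecision is phrasing $C$ as ``columns of $\widehat{wu}$'' when you really mean columns of $wu$ that survive in $\widehat{wu}$ and then get reindexed via $\phihat$; once that is read correctly, your computation $\phihat(C)=\{r-i+1,\ldots,r-1\}$ together with $\phihat(\widehat{wu})=\what\uhat$ does the job.
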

This is an immediate consequence of Lemma~\ref{indiceslem}. The proof is omitted.
\begin{lem}\label{lem:Tidefset}
Let $1\leq k\leq \pi(r)$. We have
\[
    \{n_{a,b}\in V_w \mid 1\leq a<k\leq b\leq r\} =\phihat\i\bigl(\{n_{\ahat,\bhat}\in V_{\what} \mid  1\leq \ahat<k\leq \bhat\leq r-1\}\bigr).
\]
\end{lem}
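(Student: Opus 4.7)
The plan is to unwind the definitions of $\phihat$ on the index set $\{1,\ldots,r\}\smallsetminus\{\pi(r)\}$ and of $V_w$, and verify set equality by matching elements under $\phihat$. The hypothesis $k\leq\pi(r)$ is doing all the work: it forces $\phihat$ to fix every index $<k$, which is what allows the bound $a<k$ on the left to correspond cleanly with $\ahat<k$ on the right.

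First, I would make two preliminary observations. If $n_{a,b}\in V_w$ then $(a,b)\in\Inv(\pi\i)$, which forces $\pi\i(a)>\pi\i(b)$; in particular $b\neq\pi(r)$, since $\pi\i(\pi(r))=r$ is maximal. This means every $n_{a,b}\in V_w$ satisfies $b\in\{1,\ldots,r\}\smallsetminus\{\pi(r)\}$, so $\phihat(b)$ is defined. Second, for an element $n_{a,b}$ on the left-hand side, $a<k\leq\pi(r)$ gives $a\neq\pi(r)$, so $\pi\i(a)<r$ (i.e.\ $n_{a,b}$ lies above the bottom row) and Lemma~\ref{indiceslem} applies to give $\phihat(n_{a,b})\in V_{\what}$.

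Next I would verify the two containments. For the forward inclusion, take $n_{a,b}\in V_w$ with $a<k\leq b$. Since $a<k\leq\pi(r)$, the formula \eqref{phihat} gives $\phihat(a)=a<k$. For $\phihat(b)$ there are two cases: if $b<\pi(r)$ then $\phihat(b)=b\geq k$, and if $b>\pi(r)$ then $\phihat(b)=b-1\geq\pi(r)\geq k$. So $\phihat(n_{a,b})$ lies in the right-hand set. For the reverse inclusion, take $n_{\ahat,\bhat}\in V_{\what}$ with $\ahat<k\leq\bhat$. Since $\ahat<k\leq\pi(r)$, the inverse formula in \eqref{phihat} gives $\phihat\i(\ahat)=\ahat<k$. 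For $\phihat\i(\bhat)$: if $\bhat<\pi(r)$ then $\phihat\i(\bhat)=\bhat\geq k$, and if $\bhat\geq\pi(r)$ then $\phihat\i(\bhat)=\bhat+1>\pi(r)\geq k$. By Lemma~\ref{indiceslem}, $\phihat$ is a bijection from the free variables of $wn$ above the bottom row onto $V_{\what}$, so $\phihat\i(n_{\ahat,\bhat})$ is an element of $V_w$ lying in the left-hand set.

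I do not anticipate a serious obstacle here; the lemma is essentially a bookkeeping statement about the index shift \eqref{phihat}. The only subtle point is to remember that $b=\pi(r)$ cannot occur for any $n_{a,b}\in V_w$, which is why every free variable in $V_w$ with $a<k\leq b$ survives the removal of the $\pi(r)$-th column and the index shift is well-defined on both sides.
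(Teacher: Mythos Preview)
Your proof is correct. The paper omits the proof entirely, calling it straightforward; your argument---unwinding the index shift $\phihat$ and checking both inclusions using the hypothesis $k\leq\pi(r)$---is precisely the natural verification one would supply.
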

The proof is straightforward and therefore omitted.
\begin{prop}[Proposition~\ref{detTiprop} for case 1] \label{detTicase1}
    Assume that Proposition~\ref{detTiprop} holds for every Weyl group element $w'$ for $\GL(r')$ with $2\leq r'<r$. Let $r-\pi(r)+1\leq i \leq r-1$. We have
    \[\det \bigl(T_i\bigr)=\det\bigl( L_{r-i}(w)\bigr)\cdot \Delta_i(w).\]
\end{prop}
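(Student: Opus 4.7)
The plan is to expand $\det(T_i)$ along the column of $T_i$ containing $1_{\pi(r)}$, use $\phihat$ to reduce to a determinant in $\what\uhat$, invoke the induction hypothesis, and reconcile the outcome with $\det(L_{r-i}(w))\cdot \Delta_i(w)$ through matching sign calculations.

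First I would observe that the $\pi(r)$-th column of $wn$ has $1$ in the bottom row and $0$ in every other position. Indeed, $(wn)_{k,\pi(r)} = n_{\pi(k),\pi(r)}$, which vanishes for every $k\neq r$: if $\pi(k) > \pi(r)$ the entry sits strictly below the diagonal of $n$, and if $\pi(k) < \pi(r)$ then $\bigl(\pi(k),\pi(r)\bigr) \in \Inv(\pi\i)$ would force $k > r$, which is impossible. Because $R$ fixes the values $0$ and $1$, the identical column appears in $wu$. Since $\pi(r) \geq r - i + 1$ by hypothesis, this column lies inside $T_i$ as its $(\pi(r) - r + i)$-th column, and Laplace expansion along it yields
\[\det(T_i) = (-1)^{i + (\pi(r) - r + i)} \det\bigl(\widehat{T}_i(w)\bigr) = (-1)^{\pi(r) + r} \det\bigl(\widehat{T}_i(w)\bigr).\]

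Next, Lemma~\ref{lem:TiFihatw} identifies $\phihat\bigl(\widehat{T}_i(w)\bigr)$ with $T_{i-1}(\what)$. Since $\phihat$ is a pure relabeling of free-variable indices, these two matrices have equal determinants under the corresponding substitution of names. The induction hypothesis applied to the $\GL(r-1)$ Weyl element $\what$ then supplies
\[\det\bigl(T_{i-1}(\what)\bigr) = \det\bigl(L_{r-i}(\what)\bigr) \cdot \Delta_{i-1}(\what).\]
Lemma~\ref{lem:Tidefset}, applied with $k = r - i + 1$ (permitted because $k \leq \pi(r)$), puts the variable sets underlying $\Delta_i(w)$ and $\Delta_{i-1}(\what)$ in bijection via $\phihat$, so the pullback of the monomial $\Delta_{i-1}(\what)$ to the variables of $w$ is exactly $\Delta_i(w)$.

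Finally, I would compare $L_{r-i}(\what)$ with $L_{r-i}(w)$. Since $\pi(r) > r - i$, the entry $1_{\pi(r)}$ survives the deletion defining $L_{r-i}(w)$, and a direct count of which rows and columns of $w$ are removed places it at position $(i,\pi(r) - r + i)$, i.e., in the last row of $L_{r-i}(w)$. Expanding $\det(L_{r-i}(w))$ along this row and noting that the resulting minor is precisely $L_{r-i}(\what)$ (both matrices arise from $w$ by removing exactly the rows $\pi\i(1),\ldots,\pi\i(r-i),r$ and the columns $1,\ldots,r-i,\pi(r)$) yields
\[\det\bigl(L_{r-i}(w)\bigr) = (-1)^{\pi(r) + r} \det\bigl(L_{r-i}(\what)\bigr).\]
The two factors $(-1)^{\pi(r)+r}$ then cancel, producing $\det(T_i) = \det(L_{r-i}(w)) \cdot \Delta_i(w)$. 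The main obstacle is purely the bookkeeping of indices and signs under $\phihat$, in particular verifying that the two Laplace expansions---of $T_i$ and of $L_{r-i}(w)$ along their respective bottom-row $1$'s---produce matching sign factors and that the residual minors are correctly identified via Lemma~\ref{lem:TiFihatw}.
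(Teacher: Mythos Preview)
Your proposal is correct and follows essentially the same route as the paper: Laplace expansion along the column of $T_i$ containing $1_{\pi(r)}$, transfer via $\phihat$ and Lemma~\ref{lem:TiFihatw} to $T_{i-1}(\what)$, invoke the induction hypothesis, match $\Delta_i(w)$ with $\Delta_{i-1}(\what)$ via Lemma~\ref{lem:Tidefset}, and cancel the sign against the corresponding expansion of $\det(L_{r-i}(w))$. The only minor difference is that the paper treats the edge case $i=1$ (forcing $\pi(r)=r$, where $\widehat{T}_i(w)$ degenerates to a $0\times 0$ matrix) separately, whereas your outline implicitly relies on the empty-determinant convention there.
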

\begin{proof}
First, consider $i=1$. The inequality $r-\pi(r)+1\leq i$ implies that $\pi(r)=r$, hence the bottom right corner entry of $wn$ is $1_r$. Thus $T_1=(1)$ and $\Delta_i(w)$ is an empty product. The proposition follows trivially. Accordingly, we assume that $i\geq 2$. By the induction hypothesis, we have \begin{equation}\label{Case1:detTiind}
    \det \bigl(T_{i-1}(\what)\bigr)=\det\bigl( L_{r-i}(\what)\bigr) \cdot \Delta_{i-1}(\what),
\end{equation}
where
\[
   \Delta_{i-1}(\what)=\prod\limits_{\substack{1\leq \widehat{a}<(r-1)-(i-1)+1\leq \widehat{b}\leq r-1, \\ (\widehat{a},\widehat{b})\in \Inv(\pihat\i)}} n_{\widehat{a},\widehat{b}}=\prod\limits_{\substack{1\leq \widehat{a}<r-i+1\leq \widehat{b}\leq r-1, \\ (\widehat{a},\widehat{b})\in \Inv(\pihat\i)}} n_{\widehat{a},\widehat{b}}.
\]
Lemma~\ref{lem:TiFihatw} implies $\det\bigl(\widehat{T}_{i}(w)\bigr)=\phihat\i\bigl(\det(T_{i-1}(\what)\bigr)$, and Lemma~\ref{lem:Tidefset} with $k=r-i+1$ implies $\Delta_i(w)=\phihat\i\bigl(\Delta_{i-1}(\what)\bigr)$. Applying these to \eqref{Case1:detTiind} gives
\[
   \det\bigl(\widehat{T}_{i}(w)\bigr)=\det\bigl( L_{r-i}(\what)\bigr)\cdot \phihat\i\bigl(\Delta_{i-1}(\what)\bigr) =\det\bigl( L_{r-i}(\what)\bigr)\cdot \Delta_i(w).
\]
Also, since $1_{\pi(r)}$ is in the $(i,\pi(r)-r+i)$-entry of $T_i$, we have
\[
    \det \bigl(T_i(w)\bigr)=(-1)^{i+\pi(r)-r+i}\cdot \det\bigl(\widehat{T}_{i}(w)\bigr)=(-1)^{\pi(r)-r}\cdot\det\bigl( L_{r-i}(\what)\bigr)\cdot \Delta_i(w).
\]
Since $r-i+1\leq \pi(r)$, $1_{\pi(r)}$ remains after removing the $(r-i)$ leftmost columns from $w$. Observe that $1_{\pi(r)}$ is in the $(i,\pi(r)-r+i)$-entry of $L_{r-i}(w)$, and removing the bottom row and the column containing $1_{\pi(r)}$ from $L_{r-i}(w)$ gives $L_{r-i}(\what)$. It follows that \[ (-1)^{\pi(r)-r}\cdot\det\bigl( L_{r-i}(\what)\bigr)=\det\bigl( L_{r-i}(w)\bigr).\]
This finishes the proof.
\end{proof}
The final goal of this section is to prove Proposition~\ref{supdiagprop} for case 1 under the assumption that the proposition holds for every Weyl group element $w'$ of $\GL(r')$ with $2\leq r'<r$. Recall Definitions~\ref{def:hnlevel},~\ref{def:normdef}, and~\ref{def:bwi}.
\begin{lem}\label{lem:whatlevel}
Consider a free variable $n_{a,b}\in V_w$. If $n_{a,b}$ is in the bottom row of $wn$, that is, if $a=\pi(r)$, then $\norm{n_{a,b}}_{w}=b$.
If $n_{a,b}$ is above the bottom row, then
$\norm{n_{a,b}}_{w}= \norm{\phihat(n_{a,b})}_{\what}$.
\end{lem}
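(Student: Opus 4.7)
The proof plan is to verify both cases by unwinding Definition~\ref{def:normdef} and comparing the relevant quantities for $w$ and $\what$.

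For the first case where $a=\pi(r)$, I would simply plug in $\pi\i(a)=r$ and observe that $h(r,b)=0$ because there are no entries of $wn$ below the bottom row. This gives $\norm{n_{a,b}}_w = r+b+0-r = b$ directly.

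For the second case, where $\pi\i(a)<r$, the argument is a direct arithmetic comparison. First I would note two easy consequences of \eqref{pihat}: since $a\ne \pi(r)$, and since $b\ne \pi(r)$ (the latter because $(a,b)\in \Inv(\pi\i)$ forces $\pi\i(b)<\pi\i(a)\le r$, ruling out $\pi\i(b)=r$), the map $\phihat$ is defined on both $a$ and $b$, and the identity $\widehat{\pi}=\widehat{\varphi}\circ \pi$ yields $\what\i(\phihat(a))=\pi\i(a)$. Thus
\[
\norm{\phihat(n_{a,b})}_{\what} \;=\; \pi\i(a)+\phihat(b)+\widehat{h}\bigl(\pi\i(a),\phihat(b)\bigr)-(r-1),
\]
where $\widehat{h}$ denotes $h$ computed inside $\what\nhat$.

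The key identity to establish is
\[
h\bigl(\pi\i(a),b\bigr)-\widehat{h}\bigl(\pi\i(a),\phihat(b)\bigr) \;=\; \begin{cases} 1 & b<\pi(r), \\ 0 & b>\pi(r). \end{cases}
\]
To see this I would rewrite both counts via the substitution $i=\pi\i(j)$: the quantity $h(\pi\i(a),b)$ equals $\#\{1\le i\le r : i>\pi\i(a),\,\pi(i)>b\}$, while $\widehat{h}(\pi\i(a),\phihat(b))$ is the same count restricted to $1\le i\le r-1$ (using that $\phihat$ is strictly monotone on its domain so $\phihat(\pi(i))>\phihat(b)\iff \pi(i)>b$). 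The difference therefore comes solely from $i=r$, which contributes $1$ precisely when $\pi\i(a)<r$ (true by hypothesis) and $\pi(r)>b$.

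Finally I would substitute: if $b<\pi(r)$ then $\phihat(b)=b$ and the $-1$ in $h-\widehat{h}=1$ cancels with the $-(r-1)$ vs $-r$ discrepancy; if $b>\pi(r)$ then $\phihat(b)=b-1$ and the $-1$ from $\phihat(b)-b$ plays the same role. Either way both expressions reduce to $\pi\i(a)+b+h(\pi\i(a),b)-r=\norm{n_{a,b}}_w$. There is no real obstacle here—the only mild pitfall is checking that $a$ and $b$ both lie in the domain of $\phihat$, which follows from $(a,b)\in \Inv(\pi\i)$ and $\pi\i(a)<r$ as noted above.
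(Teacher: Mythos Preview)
Your proof is correct and follows essentially the same approach as the paper. Both arguments reduce to checking whether the entry $1_{\pi(r)}$ (in row $r$, column $\pi(r)$) lies in the region counted by $h$; the paper phrases this geometrically (``$1_{\pi(r)}$ is below and to the right of $n_{a,b}$'') while you carry it out via the substitution $i=\pi\i(j)$ and isolating the $i=r$ term, but the content is identical.
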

\begin{proof}
If $a=\pi(r)$, then $h(\pi\i(a),b)=0$, thus $\norm{n_{a,b}}_{w}=b$. On the other hand, if $n_{a,b}$ is above the bottom row of $wn$, then we have 
\[ \norm{\phihat(n_{a,b})}_{\what}=\norm{n_{\phihat(a),\phihat(b)}}_{\widehat{w}}=\pihat\i(\phihat(a))+\phihat(b)+h^{(\what)}\left(\pihat\i\bigl(\phihat(a)\bigr),\phihat(b)\right)-(r-1).
\]
It follows from \eqref{pihat} that $\pihat\i\bigl(\phihat(a)\bigr)=\pi\i(a)$. To prove the lemma, it suffices to show that
\begin{equation}\label{leveleq}
b+h^{(w)}(\pi\i(a),b)=\phihat(b)+h^{(\what)}\left(\pihat\i\bigr)\phihat(a)\bigr),\phihat(b)\right)+1.
\end{equation}
If $b<\pi(r)$, then $\phihat(b)=b$ by \eqref{phihat}. Also, in this case, $1_{\pi(r)}$ is below and to the right of $n_{a,b}$. Hence removing the bottom row and the $\pi(r)$-th column of $wn$ reduces the number of entries of $1$ that are below and to the right of $n_{a,b}$ by $1$. It follows that
\[
    h^{(w)}(\pi\i(a),b)=h^{(\what)}\left(\pihat\i\bigl(\phihat(a)\bigr),\phihat(b)\right)+1.
\]
Therefore, \eqref{leveleq} follows. On the other hand, if $b>\pi(r)$, then $\phihat(b)=b-1$, and $1_{\pi(r)}$ is to the left of $n_{a,b}$. In this case, removing the bottom row and the $\pi(r)$-th column of $wn$ does not affect the number of entries of $1$ which are below and to the right of $n_{a,b}$. It follows that
\[
    h^{(w)}(\pi\i(a),b)=h^{(\what)}\left(\pihat\i\bigl(\phihat(a)\bigr),\phihat(b)\right),
\]
and \eqref{leveleq} follows as well. This finishes the proof.
\end{proof}
\begin{prop}[Proposition~\ref{supdiagprop} for case 1]\label{supdiagcase1}
Assume that Proposition~\ref{supdiagprop} holds for every Weyl group element $w'$ of $\GL(r')$ with $2\leq r'<r$. Let $1\leq i\leq \pi(r)-1$, so that the bottom row of $T_{r-i}$ contains $1_{\pi(r)}$. We have
    \begin{equation}\label{Case1:detFiTi}
    \frac{\det\left(F_{r-i}\right)}{\det\left(T_{r-i}\right)}=\sum_{n_{a,b}\in B_w(i+1)} \frac{1}{n_{a,b}}.
\end{equation}
\end{prop}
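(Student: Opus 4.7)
\emph{Plan.} The plan is to prove the identity by induction on $r$, using Lemma~\ref{lem:basecase} as the base case and reducing the case-1 assertion for $w\in\GL(r)$ to the corresponding assertion for $\widehat{w}\in\GL(r-1)$. The geometric fact that drives the reduction is that the $\pi(r)$-th column of $wn$ equals the standard basis vector $(0,\ldots,0,1)^T$: for any $k<\pi(r)$, the pair $(k,\pi(r))$ is not in $\Inv(\pi^{-1})$ because $\pi^{-1}(\pi(r))=r$ cannot be strictly less than $\pi^{-1}(k)$. Since the map $R$ leaves the zero and unit entries of $wn$ unchanged, the $\pi(r)$-th column of $wu$ is likewise $(0,\ldots,0,1)^T$.

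Expanding $\det(T_{r-i}(w))$ along the column containing $1_{\pi(r)}$ then yields
\[
   \det\bigl(T_{r-i}(w)\bigr) \;=\; (-1)^{(r-i)+(\pi(r)-i)}\,\det\bigl(\widehat{T}_{r-i}(w)\bigr),
\]
where the sign encodes the position $(r-i,\,\pi(r)-i)$ of $1_{\pi(r)}$ inside $T_{r-i}(w)$. An identical expansion for $F_{r-i}(w)$ produces the same sign, because $F_{r-i}(w)$ and $T_{r-i}(w)$ share both their bottom row and their $1_{\pi(r)}$-column in the same positions. Thus the sign cancels in the ratio, so that
\[
   \frac{\det(F_{r-i}(w))}{\det(T_{r-i}(w))} \;=\; \frac{\det(\widehat{F}_{r-i}(w))}{\det(\widehat{T}_{r-i}(w))}.
\]
Lemma~\ref{lem:TiFihatw} identifies the right-hand side with $\phihat^{-1}$ applied to $\det(F_{r-i-1}(\widehat{w}))/\det(T_{r-i-1}(\widehat{w}))$. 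Applying the induction hypothesis to $\widehat{w}$ rewrites this inner ratio as $\sum_{n_{\ahat,\bhat}\in B_{\widehat{w}}(i+1)}1/n_{\ahat,\bhat}$, and passing this sum through $\phihat^{-1}$ gives
\[
   \frac{\det(F_{r-i}(w))}{\det(T_{r-i}(w))} \;=\; \sum_{n_{a,b}\in \phihat^{-1}(B_{\widehat{w}}(i+1))} \frac{1}{n_{a,b}}.
\]

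To conclude, I would verify that $\phihat^{-1}\bigl(B_{\widehat{w}}(i+1)\bigr)=B_w(i+1)$. By Lemma~\ref{lem:whatlevel}, $\phihat$ preserves the level of every free variable of $wn$ that lies above the bottom row, so $\phihat$ restricts to a bijection from $B_w(i+1)\smallsetminus\{n_{\pi(r),\ast}\}$ onto $B_{\widehat{w}}(i+1)$. The only remaining possibility, a bottom-row free variable $n_{\pi(r),b}$ of level $i+1$, has level exactly $b$ by the same lemma, and lies in $V_w$ only when $b>\pi(r)$; such a variable of level $i+1$ would force $i+1>\pi(r)$, contradicting the case-1 hypothesis $i\leq \pi(r)-1$. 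Hence no bottom-row variable enters $B_w(i+1)$, the bijection is global, and the proof is complete. I expect the only subtle point of the argument to be the sign cancellation in the two cofactor expansions, and this is immediate once one observes that the $1_{\pi(r)}$-column sits in identical positions inside $T_{r-i}(w)$ and $F_{r-i}(w)$.
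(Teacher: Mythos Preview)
Your argument is essentially the paper's own proof: cofactor expansion along the $1_{\pi(r)}$-column, cancellation of signs in the ratio, reduction via Lemma~\ref{lem:TiFihatw} and the induction hypothesis for $\widehat{w}$, and finally the level-preservation Lemma~\ref{lem:whatlevel} to identify $\phihat^{-1}(B_{\widehat{w}}(i+1))$ with $B_w(i+1)$.

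There is one small gap. Your reduction produces $T_{r-i-1}(\widehat{w})$ and $F_{r-i-1}(\widehat{w})$, and you then invoke Proposition~\ref{supdiagprop} for $\widehat{w}\in\GL(r-1)$ with parameter $i$. That hypothesis is only available for $1\leq i\leq r-2$; when $i=r-1$ (which forces $\pi(r)=r$) you would be appealing to $T_0(\widehat{w})$ and $F_0(\widehat{w})$, which are not defined. The paper handles this boundary case directly: $T_1=(1_r)$ and $F_1$ is the single entry above $1_r$, which is zero, so the ratio vanishes; meanwhile $B_w(r)=\varnothing$ by Lemma~\ref{levelpartlem}. Adding this one-line check at the start completes your argument.
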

\begin{proof}
First, consider the case $i=r-1$. Then the inequality $i\leq \pi(r)-1$ implies that $\pi(r)=r$, hence the bottom right corner entry of $wn$ is $1_r$. In this case, $F_1$ consists only of a single entry right above $1_{\pi(r)}$, which is zero. Thus $\det(F_1)/\det(T_1)=0$. On the other hand, by Lemma~\ref{levelpartlem} we have
$\norm{n_{a,b}}\leq \min(\pi\i (a),b)\leq r-1$, thus the set $B_w(r)$ is empty. The proposition follows. \par
From now on, we assume that $r-i\geq 2$. By the induction hypothesis, we have
\begin{equation}\label{Case1:detFiTiind}
    \frac{\det\left(F_{r-i-1}(\what)\right)}{\det\left(T_{r-i-1}(\what)\right)}=\sum_{n_{\ahat,\bhat}\in B_{\what}(i+1)} \frac{1}{n_{\ahat,\bhat}}.
\end{equation}
Recall Definition~\ref{def:Tihat}. Since $1_{\pi(r)}$ is in the $(r-i,\pi(r)-i)$-entry of $T_{r-i}(w)$ and $F_{r-i}(w)$, we have 
\begin{equation}\label{FTinduction0}
    \frac{\det\bigl(F_{r-i}(w)\bigr)}{\det\bigl(T_{r-i}(w)\bigr)}=   \frac{(-1)^{r-i+\pi(r)-i}\cdot \det\bigl(\widehat{F}_{r-i}(w)\bigr)}{(-1)^{r-i+\pi(r)-i}\cdot \det\bigl(\widehat{T}_{r-i}(w)\bigr)}=\frac{\det\bigl(\widehat{F}_{r-i}(w)\bigr)}{ \det\bigl(\widehat{T}_{r-i}(w)\bigr)}.  
\end{equation}
It follows from Lemma~\ref{lem:TiFihatw} that
\begin{equation}\label{FTinduction1}
    \frac{\det\bigl(F_{r-i}(w)\bigr)}{\det\bigl(T_{r-i}(w)\bigr)}=   \phihat\i\Bigl(\frac{\det\bigl(F_{r-i-1}(\what)\bigr)}{\det\bigl(T_{r-i-1}(\what)\bigr)}\Bigr).  
\end{equation}    
Applying (\ref{Case1:detFiTiind}) to the right hand side of \ref{FTinduction1}, we obtain
\[
   \frac{\det\bigl(F_{r-i}(w)\bigr)}{\det\bigl(T_{r-i}(w)\bigr)}=  \phihat\i\Bigl(\sum_{n_{\ahat,\bhat}\in B_{\what}(i+1)} \frac{1}{n_{\ahat,\bhat}}\Bigr)=\sum_{n_{\ahat,\bhat}\in B_{\what}(i+1)} \frac{1}{\phihat\i(n_{\ahat,\bhat})}.
\]
The proposition follows once we show that 
\begin{equation}\label{Case1:levelset}
B_w(i+1) = \phihat\i\bigl(B_{\what} (i+1)\bigr).\end{equation}
It folows from Lemma~\ref{lem:whatlevel} that a free variable $n_{\pi(r),b}$, which is in the bottom row, satisfies $\norm{n_{\pi(r),b}}_{w}=b>\pi(r)$. Since $\pi(r)\geq i+1$, we have $n_{\pi(r),b}\notin B_w(i+1)$. Since the elements of $V_w$ above the bottom row of $wn$ are in bijection with the elements of $V_{\what}$ by Lemma~\ref{indiceslem}, \eqref{Case1:levelset} follows immediately from Lemma~\ref{lem:whatlevel}.
\end{proof}

\section{Parts (\textnormal{ii}) and (\textnormal{iii}) case 2: The bottom row of $T_i$ does not contain $1$}\label{sec:parts23case2}
Throughout this section, we let $w$ be a fixed Weyl group element of $\GL(r)$. The submatrices $T_i$ and $F_i$ of $wu$ do not contain $1_{\pi(r)}$ if $i<r-\pi(r)+1$. In this case, the induction step involves removing the rightmost column and requires results from \cite{kim2024pt1}.
\subsection{Change of indices after removing the rightmost column}\label{sec:coicol}
Let $\widetilde{w}$ be the Weyl group element of size $(r-1)$ obtained from $w$ by removing the rightmost column and the $\pi\i(r)$-th row, which contains $1_r$. Let $\phitilde$ be a bijection from $\{1,\ldots,r\}\smallsetminus \{\pi\i(r)\}$ to $\{1,\ldots,r-1\}$ defined by
 \begin{equation}\label{phitilde}
\phitilde(i)=\begin{cases}
i & \text{if}\quad i<\pi\i(r) \\
i-1 & \text{if}\quad i>\pi\i(r),
\end{cases}
\quad \phitilde\i(i)=\begin{cases}
i & \text{if}\quad i<\pi\i(r) \\
i+1 & \text{if}\quad i\geq\pi\i(r).
\end{cases}\end{equation}
Check that the permutation $\pitilde\in \mathfrak{S}_{r-1}$ which corresponds to $\wtilde$ is given by
\begin{equation}\label{pitilde}
\widetilde{\pi}(i)=(\pi\circ \phitilde\i)(i)=
\begin{cases}
\pi(i) & \text{if}\quad i<\pi\i(r) \\
\pi(i+1) & \text{if}\quad i\geq\pi\i(r),
\end{cases}\end{equation}
for $1\leq i\leq r-1$. Here, the permutations in the symmetric group $\mathfrak{S}_{r-1}$ are tacitly considered as permutations lying in $\mathfrak{S}_{r}$ which fix $r$. The inverse of $\pitilde$ is given by
\begin{equation}\label{pitildeinv}
    \widetilde{\pi}\i(i)=(\phitilde\circ\pi\i)(i)=
\begin{cases}
\pi\i(i) & \text{if}\quad \pi\i(i)<\pi\i(r) \\
\pi\i(i)-1 & \text{if}\quad \pi\i(i)> \pi\i(r),
\end{cases}
\end{equation}
for $1\leq i\leq r-1$.
\begin{definition}\label{def:wnwutilde}
    We let $\wntilde$ and $\wutilde$ denote the submatrices of $wn$ and $wu$, respectively, obtained by removing the $\pi\i(r)$-th row and the rightmost column from $wn$ and $wu$.
\end{definition}
\begin{definition}\label{def:wtildenu}
We let $\wtilde\ntilde$ be the general matrix of $\wtilde \bar{N}^{\wtilde\i}$, where $\bar{N}\subset \GL(r-1)$ is the subgroup of unipotent upper triangular matrices, and $\ntilde\in \bar{N}^{\wtilde\i}$ is the unipotent upper triangular matrix with free variables $n_{\atilde,\btilde} \in V_{\wtilde}$. We let $\wtilde\utilde$ be the matrix obtained from $\wtilde \ntilde$ by replacing the free variables $n_{\atilde,\btilde}$ in $\wtilde \ntilde$ with $u_{\atilde,\btilde}=R^{(\wtilde)}(n_{\atilde,\btilde})$.
\end{definition}
The matrix $\widetilde{wn}$ contains all of the free variables in $wn$ which are to the left of the rightmost column. Consider again the example \eqref{ex:w3}. We have
\begin{equation}\label{ex:w3tilde}
\widetilde{w}=\left(
\begin{smallarray}{cccc}
 0 & 1 & 0 & 0 \\
 0 & 0 & 0 & 1 \\
 1 & 0 & 0 &0 \\
 0 & 0 & 1 &0 \\
\end{smallarray}
\right), \quad \widetilde{wn}=\left(
\begin{smallarray}{cccc}
 0 & 1 & 0 & 0 \\
 0 & 0 & 0 & 1 \\
 1 & n_{1,2} & 0 &n_{1,4} \\
 0 & 0 & 1 &n_{3,4} \\
\end{smallarray}
\right), \quad \text{and}\quad \widetilde{w}\ntilde=\left(
\begin{smallarray}{cccc}
 0 & 1 & 0 & 0 \\
 0 & 0 & 0 & 1 \\
 1 & n_{1,2} & 0 &n_{1,4} \\
 0 & 0 & 1 &n_{3,4} \\
\end{smallarray}
\right).\end{equation}
Observe that $\widetilde{wn}=\wtilde\ntilde$.
\begin{lem}[Lemma 6.99 of \cite{kimWhit}]\label{wntildewtilden}
We have $\widetilde{wn}=\wtilde\ntilde$, and $V_{\wtilde}=\{n_{a,b}\mid n_{a,b}\in V_w, \, b<r\}$.
\end{lem}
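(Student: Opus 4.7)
The plan is to unwind the definitions and verify both assertions by matching entries position by position.

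First I would observe that the $\pi\i(r)$-th row of $wn$ is the standard basis vector $(0, 0, \ldots, 0, 1)$ with the $1$ in column $r$. Indeed, its entries are $(wn)_{\pi\i(r), j} = n_{r, j}$ by \eqref{wnentries}, and the conventions following that display force $n_{r, r} = 1$ and $n_{r, j} = 0$ for $j < r$ (since $r > j$). Thus deleting this row together with the $r$-th column removes only zeros and a single $1$, so every remaining entry of $wn$ appears in $\widetilde{wn}$ in its (re-indexed) position.

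Next I would compute the $(i, j)$-entry of each side of the asserted identity for $1 \le i, j \le r - 1$. For $\widetilde{wn}$, the definition of $\phitilde$ in \eqref{phitilde} together with \eqref{wnentries} yields
\[
(\widetilde{wn})_{i, j} \;=\; (wn)_{\phitilde\i(i), j} \;=\; n_{\pi(\phitilde\i(i)), j} \;=\; n_{\pitilde(i), j},
\]
the last equality by \eqref{pitilde}. For $\wtilde \ntilde$, the same formula \eqref{wnentries} applied to $\wtilde$ gives $(\wtilde \ntilde)_{i, j} = n_{\pitilde(i), j}$ as well. What remains is to check that these formal symbols carry the same $0$/$1$/free-variable meaning under $w$ and under $\wtilde$, so that the equality is an identity of matrices of indeterminates.

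For this step I would invoke \eqref{pitildeinv}: on the domain $\{1, \ldots, r-1\}$, the map $k \mapsto \pitilde\i(k)$ is obtained from $k \mapsto \pi\i(k)$ by collapsing out the value $\pi\i(r)$ in an order-preserving way. Consequently, for $1 \le a < b \le r - 1$,
\[
\pi\i(a) > \pi\i(b) \;\Longleftrightarrow\; \pitilde\i(a) > \pitilde\i(b),
\]
which together with \eqref{invpiinv} immediately yields $\Inv(\pitilde\i) = \{(a, b) \in \Inv(\pi\i) : b < r\}$. Translated to free-variable sets via \eqref{coords}, this is the second assertion $V_{\wtilde} = \{n_{a, b} \in V_w : b < r\}$; since $j < r$, it also certifies that $n_{\pitilde(i), j}$ is free, equal to $1$, or zero in $\wtilde \ntilde$ under exactly the same circumstances as in $wn$. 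This gives $\widetilde{wn} = \wtilde \ntilde$. No real obstacle is anticipated; the whole argument is a bookkeeping check on how $\phitilde$ and $\pitilde$ interact with the inversion condition, and both assertions drop out once these compatibilities are spelled out.
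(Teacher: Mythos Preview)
Your proof is correct and follows the straightforward verification the paper alludes to (the paper omits the argument entirely, saying only that it is straightforward and referring to \cite{kimWhit}). Your unwinding of \eqref{wnentries}, \eqref{pitilde}, and \eqref{pitildeinv} to match entries and inversion sets is exactly the bookkeeping check one expects here.
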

The proof is straightforward. A detailed proof is provided in \cite{kimWhit}.
\subsection{Reduction from $w$ to $\wtilde$} In this section, we write the determinants of $T_i(w)$ and $F_i(w)$ in terms of $T_{i-1}(\wtilde)$ and $F_{i-1}(\wtilde)$. First, let us record the results from \cite{kim2024pt1}. Recall Definition~\ref{xkdef} of $x_k$.
\begin{definition}\label{xLkdef}
   Let $1\leq i\leq r-1$. For $k\geq r-i$, $k\neq \pi\i(r)$, we let $x^L_k$ denote the row vector of length $(i-1)$, \[x^L_k=[x^L_k(r-i+1) \quad x^L_k(r-i+2) \quad \cdots \quad x^L_k(r-1)],\] where
    \begin{equation}\label{xLkdef1}
    x^L_k(j)=\begin{cases}
    R_L(n_{\pi(k),j}) & \text{if} \quad \bigl(\pi(k),j\bigr)\in \Inv(\pi\i) \\
    x_k(j) & \text{otherwise}
    \end{cases} 
    \end{equation}
    for $r-i+1\leq j\leq r-1$.
\end{definition}
\begin{definition}\label{TLdef} 
We let $T^L_i$ be the matrix of size $(i-1)\times (i-1)$ which equals
\begin{equation}\label{TiL}
\left(\begin{smallarray}{c}
 x^L_{r-i+2} \\ \vdots \\
 x^L_r
\end{smallarray}\right) \quad \text{or} \quad \left(\begin{smallarray}{c}
 x^L_{r-i+1} \\ \vdots \\  x^L_{r-i+s-1} \\ x^L_{r-i+s+1} \\ \vdots \\
 x^L_r
\end{smallarray}\right) \quad \text{with} \quad r-i+s=\pi\i(r),
\end{equation}
depending on whether $r-i+1>\pi\i(r)$ or $r-i+1\leq \pi\i(r)$, respectively.
\end{definition}
\begin{definition}\label{FLdef1}
   We let $F^L_i$ be the square matrix of size $(i-1)$ which equals
\begin{equation}\label{FiL}
       \left(\begin{smallarray}{c}
 x^L_{r-i+1} \\  x^L_{r-i+3} \\ \vdots \\ x^L_r
\end{smallarray}\right), \quad  \left(\begin{smallarray}{c}
 x^L_{r-i} \\  x^L_{r-i+3} \\ \vdots \\ x^L_r
\end{smallarray}\right), \quad \text{or}\quad \left(\begin{smallarray}{c}
  x^L_{r-i} \\ 
   x^L_{r-i+2}  \\ 
   \vdots \\ x^L_{r-i+s-1}  \\  x^L_{r-i+s+1}  \\
  \vdots\\ 
  x^L_r 
\end{smallarray}\right) \quad \text{with} \quad r-i+s=\pi\i(r),
  \end{equation}
depending on whether $r-i+1>\pi\i(r)$, $r-i+1=\pi\i(r)$, or $r-i+1<\pi\i(r)$, respectively. 
\end{definition} 
\begin{prop}[Proposition 1.16 of \cite{kim2024pt1}]\label{thm:rowcolop}
Assume that $1\leq i<r-\pi(r)+1$, so that the bottom row of $T_i$ and $F_i$ does not contain the entry of $1$ in the bottom row of $wu$.
    \begin{enumerate}
        \item[(i)] We have
$\det (T_i)=(-1)^{s+i} \cdot u_{\pi(r-i+s),r}\cdot \det( T_i^L)$, where $s$ is such that $x_{r-i+s}$ is the row of $T_i$ that contains the highest nonzero entry of the rightmost column of $T_i$.
\item[(ii)] We have
\[\det (F_i)=\begin{cases}
    \frac{1}{n_{\pi(r-i+1),r}}\det(T_i)-\frac{(-1)^{i}}{n_{\pi(r-i+1),r}}\cdot u_{\pi(r-i+2),r}\cdot \det (F^L_i) & \text{ if } r-i+1>\pi\i(r) \\
   (-1)^{i}\cdot u_{\pi(r-i+2),r}\cdot \det (F^L_i) & \text{ if } r-i+1=\pi\i(r) \\
   (-1)^{r-\pi\i(r)}\cdot \det (F^L_i) & \text{ if } r-i+1<\pi\i(r).
\end{cases}\]
    \end{enumerate}
\end{prop}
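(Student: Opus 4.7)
The plan is to reduce both determinants to Laplace expansions along the rightmost column, after a single sweep of row operations using a distinguished pivot row. The decisive ingredient is the decomposition $R(n_\alpha)=R_L(n_\alpha)+R_1(n_\alpha)+R_2(n_\alpha)$ from Section~\ref{sec:detsub}, together with a path-sum identity established in~\cite{kim2024pt1} showing that for every free variable $n_{\pi(k),j}$ with $j<r$ lying strictly below the pivot row, the ``non-leftward'' part $R_1+R_2$ of $R(n_{\pi(k),j})$ factors precisely as the ratio $u_{\pi(k),r}/u_{\pi(\mathrm{pivot}),r}$ of rightmost-column entries times the corresponding $R_L$-entry of the pivot row. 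This is exactly what is needed for the row operation $x_k\mapsto x_k-\frac{u_{\pi(k),r}}{u_{\pi(\mathrm{pivot}),r}}x_{\mathrm{pivot}}$ to simultaneously annihilate the column-$r$ entry of $x_k$ and replace each remaining entry by the $R_L$-version $x^L_k(j)$ of Definition~\ref{xLkdef}.

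For part (i), I would locate the pivot row $x_{r-i+s}$ as the topmost row of $T_i$ whose rightmost-column entry is nonzero: this is $x_{r-i+1}$ when $r-i+1>\pi\i(r)$, so the entry is the free variable $n_{\pi(r-i+1),r}$, and $x_{\pi\i(r)}$ when $r-i+1\leq\pi\i(r)$, so the entry is $1_r$. Performing the row sweep against this pivot reduces $T_i$ to a matrix whose rightmost column has a single nonzero entry $u_{\pi(r-i+s),r}$ at position $(s,i)$ and whose other columns coincide with $T^L_i$. Laplace expansion along the rightmost column then produces $\det(T_i)=(-1)^{s+i}u_{\pi(r-i+s),r}\det(T^L_i)$.

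For part (ii), the argument splits into three cases by the position of the missing row $x_{r-i+1}$ relative to $\pi\i(r)$. When $r-i+1<\pi\i(r)$, the row of $1_r$ still lies inside $F_i$ and plays the role of pivot; the row sweep clears the rest of column $r$ and Laplace expansion produces the single-term answer $(-1)^{r-\pi\i(r)}\det(F^L_i)$. When $r-i+1=\pi\i(r)$, the pivot slides to the new second row $x_{r-i+2}$, yielding $(-1)^i u_{\pi(r-i+2),r}\det(F^L_i)$ after an identical procedure. The case $r-i+1>\pi\i(r)$ is the most delicate: every row of $F_i$ has a nonzero rightmost-column entry, and pivoting on $x_{r-i+2}$ produces a minor with top row $x^L_{r-i}$ rather than the $x^L_{r-i+1}$ appearing in the definition of $F^L_i$. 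To bridge this gap I invoke a further identity from~\cite{kim2024pt1} rewriting $n_{\pi(r-i+1),r}\cdot x^L_{r-i}$ as a combination of $u_{\pi(r-i),r}\cdot x^L_{r-i+1}$ and a row proportional to one already occupying $T^L_i$. Multilinearity of the determinant splits the result into $\frac{1}{n_{\pi(r-i+1),r}}\det(T_i)$ (via part (i)) and $-\frac{(-1)^i}{n_{\pi(r-i+1),r}} u_{\pi(r-i+2),r}\det(F^L_i)$, exactly matching the two-term formula.

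The main obstacle is establishing the path-sum identity for $R_1+R_2$ in terms of the pivot row, and its refinement relating $x^L_{r-i}$ to $x^L_{r-i+1}$ needed in case 1 of part (ii). Both are essentially bijective manipulations of the disjoint-path systems underlying $P$, $P_L$, $P_1$, and $P_2$, and they constitute the technical core of~\cite{kim2024pt1}. Once they are granted, the three-case analysis collapses to clean row reduction, Laplace expansion, and sign bookkeeping.
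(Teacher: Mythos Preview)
This proposition is not proved in the present paper: it is quoted verbatim as Proposition~1.16 of the companion paper~\cite{kim2024pt1}, and all the work goes into that reference. So there is no ``paper's own proof'' here to compare against; what you have written is a sketch of what the proof in~\cite{kim2024pt1} ought to look like.

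Your high-level plan---reduce via row operations governed by the decomposition $R=R_L+R_1+R_2$, then Laplace-expand along column $r$---is the natural strategy, and you correctly identify that the real content is a path-sum identity from~\cite{kim2024pt1}. However, the specific implementation you describe (\emph{a single sweep against one pivot row}) cannot work as stated in the case $r-i+1\le\pi\i(r)$. In that regime the pivot row is $x_{\pi\i(r)}$, the row of $1_r$, and since $(wn)_{\pi\i(r),j}=n_{r,j}=0$ for every $j<r$, this row is $(0,\dots,0,1)$. Sweeping against it only annihilates the rightmost column; it does nothing to the entries $R(n_{\pi(k),j})$ in rows $k>\pi\i(r)$, and those do \emph{not} already equal $R_L(n_{\pi(k),j})$ because their origin sets reach into the rightmost column. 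So after your sweep the surviving $(i-1)\times(i-1)$ minor is not $T^L_i$.

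The actual reduction in~\cite{kim2024pt1} is more delicate than a single pivot sweep: one has to propagate the ``going-up'' contributions $R_1+R_2$ through several rows, using an identity of the type recorded in this paper as Lemma~4.1 of~\cite{kim2024pt1} (namely $R(n_{\pi(k),j})=n_{\pi(k),r}\,u_{\pi(k-1),j}+R_L(n_{\pi(k),j})+R_1(n_{\pi(k),j})$) together with the further decomposition of $R_1$ in Lemma~5.10 there. These let you cascade the reduction row by row rather than clearing everything against a single pivot. Your sketch of the two-term case $r-i+1>\pi\i(r)$ in part~(ii) has the right shape, but again the bridging identity between $x^L_{r-i}$ and $x^L_{r-i+1}$ is doing all the work, and you would need to state it precisely (and check signs) rather than invoke it by name.
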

We write $T_i^L=T_i^L(w)$ and $F_i^L=F_i^L(w)$ to specify their associated Weyl group element. We shall describe how $T_i^L(w)$ and $F_i^L(w)$ are related to submatrices of $\wtilde\utilde$. For this, the following lemma is used.
Recall Definition~\ref{def:RLR1R2}.
\begin{lem}[Corollary 3.14 of \cite{kim2024pt1}]\label{RLwRwtilde}
Let $n_{a,b}\in V_w$ be a free variable that is not in the rightmost column of $wn$. We have
\begin{equation}\label{RLwRwtilde0}
R^{(w)}_L(n_{a,b})=\begin{cases}
R^{(\widetilde{w})}(n_{a,b}) & \text{if} \quad \pi\i(b)<\pi\i(a)<\pi\i(r), \\
-n_{a,r}\cdot R^{(\widetilde{w})}(n_{a,b}) & \text{if} \quad \pi\i(b)<\pi\i(r)<\pi\i(a), \\
\frac{n_{a,r}}{n_{b,r}}\cdot R^{(\widetilde{w})}(n_{a,b}) & \text{if} \quad \pi\i(r)<\pi\i(b)<\pi\i(a).
\end{cases}
\end{equation}
\end{lem}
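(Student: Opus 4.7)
The plan is to prove each case by a direct bijection between path systems in $w$ satisfying the $L$-condition and unrestricted path systems in $\wtilde$, then to track how the formula $R_L=(-1)^{t}\,P_L/\prod\rho$ transforms under deletion of the $\pi\i(r)$-th row and the $r$-th column of $wn$. The three cases correspond to the position of the row $\pi\i(r)$ relative to the submatrix $M_w(n_{a,b})$, and in each case the discrepancy arises from exactly one geometric feature: the presence/absence of $1_r$ as a destination and the presence/absence of column-$r$ free variables as origins.

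Case 1 ($\pi\i(b)<\pi\i(a)<\pi\i(r)$) is the easiest: $M_w(n_{a,b})$ lies strictly above row $\pi\i(r)$, and its rightmost column is identically zero, because nonzero entries of column $r$ of $wn$ occur only at or below row $\pi\i(r)$. Consequently there are no origins in column $r$, so $\mathcal{P}_L(n_{a,b})=\mathcal{P}(n_{a,b})$. Deleting row $\pi\i(r)$ (which lies below the submatrix) and the all-zero rightmost column preserves the origins, destinations, $\rho$-values, and path products; after reindexing by $\phitilde$ via Lemma~\ref{wntildewtilden} we obtain $R^{(w)}_L(n_{a,b})=R^{(\wtilde)}(n_{a,b})$.

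Case 2 ($\pi\i(b)<\pi\i(r)<\pi\i(a)$) is the main case. Row $\pi\i(r)$ lies strictly inside $M_w(n_{a,b})$, so $1_r$ is a destination and, being the only nonzero entry of its row within the submatrix, also an origin; the path issuing from $1_r$ is therefore forced to pair $1_r$ with itself as a length-zero path. The bottom origin is $n_{a,r}$, and each destination $1_\mu$ with $\mu\neq b,r$ and $\pi\i(\mu)>\pi\i(r)$ contributes an origin $n_{\mu,r}$ in column $r$, whose path must begin with a leftward step by the $L$-condition. Deleting the trivial path at $1_r$ together with the initial leftward step of every other column-$r$ path yields a bijection with $\mathcal{P}^{(\wtilde)}(n_{a,b})$. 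Three bookkeeping identities then finish the case: the count $|D|$ drops by one (giving the sign $-1$); the $u$-product loses exactly $n_{a,r}\cdot\prod_{\mu}n_{\mu,r}$ over the exterior column-$r$ origins; the $\rho$-product loses exactly the same $\prod_{\mu}n_{\mu,r}$, since $\rho^{(w)}(1_r)=1$ and the condition $\pi\i(b)<\pi\i(r)$ excludes $b$ from that product. Cancellation leaves only $n_{a,r}$, producing $R^{(w)}_L(n_{a,b})=-n_{a,r}\,R^{(\wtilde)}(n_{a,b})$.

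Case 3 ($\pi\i(r)<\pi\i(b)<\pi\i(a)$) proceeds analogously: $1_r$ is no longer in $M_w(n_{a,b})$, but the entire rightmost column of the submatrix consists of free variables, so every origin lies in column $r$ and every path must start leftward. The same deletion map gives a bijection with $\mathcal{P}^{(\wtilde)}(n_{a,b})$; no sign flip occurs, the $u$-product loses $n_{a,r}\cdot\prod_{\mu\neq b}n_{\mu,r}$, and the $\rho$-product loses $n_{b,r}\cdot\prod_{\mu\neq b}n_{\mu,r}$ (since every row of the submatrix now contributes a column-$r$ free variable to its $\rho$), leaving the factor $n_{a,r}/n_{b,r}$. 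The main obstacle in all three cases is verifying rigorously that the ``delete the initial leftward step'' map lands inside $\mathcal{P}^{(\wtilde)}(n_{a,b})$ and is a bijection: one must show that after removal, the new starting entry of each path coincides with the rightmost nonzero entry of its row within $M_{\wtilde}(n_{a,b})$, which requires a careful chase through Definition~\ref{Odef} together with \eqref{pitilde} and Lemma~\ref{wntildewtilden}, handling the possibility of intervening free variables between the column-$r$ origin and the next nonzero entry to its left.
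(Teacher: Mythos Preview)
The paper does not prove this lemma here; it is quoted as Corollary~3.14 of the companion paper \cite{kim2024pt1}, so there is no in-paper argument to compare against. Your direct combinatorial proof is correct and self-contained.

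Your case split according to the position of row $\pi\i(r)$ relative to $M_w(n_{a,b})$ is the right one, and the bijection you describe---delete the trivial $1_r$-path in Case~2, then strip the initial column-$r$ entry from every path whose origin lies in column $r$---does yield a bijection $\mathcal P_L^{(w)}(n_{a,b})\leftrightarrow\mathcal P^{(\wtilde)}(n_{a,b})$. The verification you flag as the ``main obstacle'' is routine once stated precisely: for an origin $n_{\mu,r}$ (or $n_{a,r}$) in column $r$, its left neighbor in row $\pi\i(\mu)$ is by definition the rightmost nonzero entry of that row in columns $<r$, which is exactly $\gamma^{(\wtilde)}(1_\mu)$; hence the stripped system has origin set $O^{(\wtilde)}(n_{a,b})$. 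Disjointness is preserved under deletion, and the inverse (prepend $n_{\mu,r}$ to each path whose $\wtilde$-origin sits in a row below $\pi\i(r)$, then add the trivial $1_r$-path in Case~2) is well-defined because the prepended entries occupy distinct rows of column $r$ and the prepended step is leftward, so the $L$-condition is automatic. Your sign, $u$-product, and $\rho$-product bookkeeping is correct in all three cases.

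One small wording issue: in your last sentence you mention ``intervening free variables between the column-$r$ origin and the next nonzero entry to its left,'' but by the definition of neighbor there can be no such intervening nonzero entry. What you actually need (and tacitly use) is that the left neighbor \emph{exists}: this holds because $1_\mu$ lies in column $\mu$ with $b\le\mu<r$, so the row has a nonzero entry in columns $b$ through $r-1$.
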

Recall Definitions~\ref{defSi} and~\ref{defEi} of $S_i$ and $E_i$. We use the notation $S_i(w)$ and $E_i(w)$ to specify the associated Weyl group element.
\begin{definition}\label{ykdef}
For $k\geq r-i$, we let $y_k$ denote the row vector of length $i$ which consists of the last $i$ entries of the $k$-th row of $wn$, that is, $y_k=(wn)_{k,r-i+1\leq j\leq r}$. For $r-i+1\leq j\leq r$, we write $y_k(j)=(wn)_{k,j}=n_{\pi(k),j}$.
\end{definition}
\begin{definition}\label{yktildedef}
For $k\geq r-i$, we let $\widetilde{y}_k$ denote the row vector of length $(i-1)$ which is obtained by removing the rightmost entry $y_k(r)$ of $y_k$. For $r-i+1\leq j\leq r-1$, we write $\widetilde{y}_k(j)=y_k(j)$.
\end{definition}
The following lemma is analogous to Lemma~\ref{lem:TiFihatw}.
\begin{lem}\label{SwEwtildelem}
If $1\leq i<r-\pi(r)+1$, then $T^L_i(w)$ and $F^L_i(w)$ coincide with those obtained from $S_{i-1}(\widetilde{w})$ and $E_{i-1}(\widetilde{w})$, respectively, by replacing each entry $n_\alpha \in V_{\widetilde{w}}$ with $R^{(w)}_L(n_\alpha)$.
\end{lem}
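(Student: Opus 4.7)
The plan is to unpack the definitions of both $T_i^L(w)$, $F_i^L(w)$ and $S_{i-1}(\widetilde{w})$, $E_{i-1}(\widetilde{w})$, verify that they select the same rows/columns of the underlying matrices, and then compare entries position by position. Throughout, write $p=\pi^{-1}(r)$ for the row of $wn$ containing $1_r$. By Lemma~\ref{wntildewtilden}, $\widetilde{w}\widetilde{n}$ coincides with $\widetilde{wn}$, the submatrix of $wn$ obtained by deleting row $p$ and column $r$, and the free variables $n_\alpha\in V_{\widetilde{w}}$ sit at the same positions as they do in $\widetilde{wn}$.

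First I would fix the correspondence of rows. Under the bijection $\widetilde{\varphi}$ from \eqref{phitilde}, row $k$ of $\widetilde{wn}$ corresponds to row $k$ of $wn$ if $k<p$, and to row $k+1$ if $k\geq p$. The lower-right $(i-1)\times(i-1)$ block $S_{i-1}(\widetilde{w})$ uses columns $r-i+1,\ldots,r-1$ and the last $i-1$ rows of $\widetilde{wn}$; unwinding $\widetilde{\varphi}$ shows these are precisely the rows of $wn$ indexed $r-i+1,\ldots,r$ with row $p$ removed when $p\geq r-i+1$, and the rows $r-i+2,\ldots,r$ when $p<r-i+1$. This matches the two subcases of Definition~\ref{TLdef} for $T_i^L(w)$ on the nose (with $s$ defined by $r-i+s=p$ in Case 2). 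The corresponding bookkeeping for $F_i^L(w)$ against $E_{i-1}(\widetilde{w})$ requires splitting into the three subcases $p\leq r-i$, $p=r-i+1$, and $p\geq r-i+2$ of Definition~\ref{FLdef1}; in each, tracking row $r-i$ of $\widetilde{wn}$ and the rows $r-i+2,\ldots,r-1$ of $\widetilde{wn}$ through $\widetilde{\varphi}^{-1}$ produces exactly the row list appearing in $F_i^L$.

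Next, with the row and column indices identified, I would verify that the entries agree. At an entry position $(k,j)$ with $j\leq r-1$ and $k\neq p$, the entry of $\widetilde{wn}$ is the same as the entry of $wn$ at $(k,j)$, which is either $0$, $1$, or a free variable $n_\alpha$ with $\alpha=(\pi(k),j)$. In the first two cases the rule in Definition~\ref{xLkdef} gives $x_k^L(j)=x_k(j)=(wu)_{k,j}$, and since $R$ fixes $0$ and $1$, this agrees with the entry of $S_{i-1}(\widetilde{w})$ under the (trivial) substitution. In the third case $(\pi(k),j)\in\operatorname{Inv}(\pi^{-1})$, so $x_k^L(j)=R_L^{(w)}(n_\alpha)$, which is exactly the entry obtained from $S_{i-1}(\widetilde{w})$ after replacing $n_\alpha$ with $R_L^{(w)}(n_\alpha)$. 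Because $j\leq r-1$ throughout, every such $n_\alpha$ lies in $V_{\widetilde{w}}$ by the second part of Lemma~\ref{wntildewtilden}, so the substitution is well-defined. The same argument, applied row by row, handles $F_i^L(w)$ versus $E_{i-1}(\widetilde{w})$.

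I do not expect any serious obstacle here, since this is a pure bookkeeping lemma; the only potential source of error is in the case splits for the $E_{i-1}$/$F_i^L$ comparison, where one has to check each of the three configurations of $p$ relative to the row range $r-i,r-i+1,\ldots,r$ separately to confirm that inserting row $r-i$ of $\widetilde{wn}$ back as row $r-i$ or $r-i+1$ of $wn$ produces the row list of Definition~\ref{FLdef1}. Once these case checks are written out, the lemma reduces to equality of matrices entry by entry, as above. Note that although Lemma~\ref{RLwRwtilde} gives a precise comparison of $R_L^{(w)}(n_\alpha)$ with $R^{(\widetilde{w})}(n_\alpha)$, that identification is not needed for this statement; one only uses that the entries of the target matrix are $R_L^{(w)}(n_\alpha)$ at the free-variable positions.
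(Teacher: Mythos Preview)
Your proposal is correct and follows essentially the same approach as the paper: both arguments use the bijection $\widetilde{\varphi}$ to identify the rows of $S_{i-1}(\widetilde{w})$ and $E_{i-1}(\widetilde{w})$ inside $wn$ with the row lists defining $T_i^L(w)$ and $F_i^L(w)$, and then compare entries position by position using Definition~\ref{xLkdef}. The paper packages the row bookkeeping slightly more compactly via the formula \eqref{Sitildephi}, but your explicit case splits amount to the same verification.
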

\begin{proof}
By Lemma~\ref{wntildewtilden}, $S_{i-1}(\wtilde)$ and $E_{i-1}(\wtilde)$ can be regarded as submatrices of $\wntilde$. By Definition~\ref{phitilde}, the row number $j$ of the rows of $wn$ with $j\neq \pi\i(r)$ becomes $\phitilde(j)$ after removing the $\pi\i(r)$-th row. Thus the $j$-th row of $\wntilde$ equals the $\phitilde\i(j)$-th row of $wn$ with the rightmost entry removed. It follows that
\begin{equation}\label{Sitildephi}
   S_{i-1}(\wtilde)= \left(\begin{smallarray}{c}
 \widetilde{y}_{\phitilde\i(r-i+1)} \\ \widetilde{y}_{\phitilde\i(r-i+2)} \\ \vdots \\ \widetilde{y}_{\phitilde\i(r-1)}
\end{smallarray}\right) \eqand E_{i-1}(\wtilde)= \left(\begin{smallarray}{c}
 \widetilde{y}_{\phitilde\i(r-i)} \\ \widetilde{y}_{\phitilde\i(r-i+2)} \\ \vdots \\ \widetilde{y}_{\phitilde\i(r-1)}
\end{smallarray}\right).
\end{equation}
By (\ref{phitilde}), we have $\phitilde\i(j)=j$ for $j<\pi\i(r)$ and $\phitilde\i(j)=j+1$ for $j\geq \pi\i(r)$. Observe from \eqref{TiL} and \eqref{FiL} that replacing the rows $\widetilde{y}_l$ of $S_{i-1}(\wtilde)$ and $E_{i-1}(\wtilde)$ with $x^L_l$ gives the matrices $T_i^L(w)$ and $F_i^L(w)$, respectively. The lemma follows by Definitions~\ref{xLkdef} and~\ref{ykdef}, together with the fact that $x_k(j)=(wu)_{k,j}=(wn)_{k,j}=y_k(j)$ whenever $y_k(j)\notin V_w$.
\end{proof}
Finally, we prove that $T^L_i(w)$ and $F^L_i(w)$ are equal to $T_{i-1}(\widetilde{w})$ and $F_{i-1}(\widetilde{w})$, respectively, each multiplied on the left and right by diagonal matrices.
\begin{definition}\label{rowcolscale}
    For $1\leq a,\, b\leq r-1$, we define the row scaling factors as
\begin{equation}\label{rowscale}
s_{\text{row}}(a)=\begin{cases}
n_{\widetilde{\pi}(a),r} & \text{if} \quad \pi\i\left(\widetilde{\pi}(a)\right)>\pi\i(r) \\ 
-1 & \text{otherwise},
\end{cases}
\end{equation}
and the column scaling factors as
\begin{equation}\label{colscale}
s_{\text{col}}(b)=\begin{cases}
n_{b,r} & \text{if} \quad \pi\i(b)>\pi\i(r) \\
-1 & \text{otherwise.}
\end{cases}
\end{equation}
\end{definition}
\noindent Observe from \eqref{pitilde} that $(\pi\i\circ \pitilde)(a)=\phitilde\i(a)$ for all $1\leq a\leq r-1$. It is straightforward to check that the following formula is equivalent to \eqref{rowscale}:
\begin{equation}\label{rowscaleequiv}
s_{\text{row}}(a)=\begin{cases}
n_{\pi(a+1),r} & \text{if} \quad a\geq\pi\i(r) \\ 
-1 & \text{otherwise}.
\end{cases}
\end{equation}
\begin{lem}\label{RLwscale}
Let $n_{a,b}\in V_w$ be a free variable that is not in the rightmost column of $wn$. We have
\[R^{(w)}_L(n_{a,b})=\frac{s_{\text{row}}\bigl(\widetilde{\pi}\i(a)\bigr)}{s_{\text{col}}(b)}R^{(\widetilde{w})}(n_{a,b}).\] 
\end{lem}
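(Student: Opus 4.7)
The plan is to reduce the lemma directly to Lemma~\ref{RLwRwtilde}, which already expresses $R^{(w)}_L(n_{a,b})$ as $R^{(\widetilde{w})}(n_{a,b})$ times one of three explicit coefficients, and then check that in each of the three cases the stated ratio $s_{\text{row}}(\widetilde{\pi}\i(a))/s_{\text{col}}(b)$ produces the same coefficient. Because $n_{a,b}\in V_w$, we have $(a,b)\in\Inv(\pi\i)$ and hence $\pi\i(a)>\pi\i(b)$; since $b<r$ also implies $a<r$, neither $a$ nor $b$ equals $r$, so $\pi\i(a)\neq\pi\i(r)$ and $\pi\i(b)\neq \pi\i(r)$. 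The three cases of Lemma~\ref{RLwRwtilde} therefore partition all possibilities, according to where $\pi\i(r)$ sits relative to $\pi\i(a)$ and $\pi\i(b)$.

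The key computational observation is that $s_{\text{row}}(\widetilde{\pi}\i(a))$ only depends on the sign of $\pi\i(a)-\pi\i(r)$. Indeed, from the definition \eqref{rowscale}, $s_{\text{row}}(\widetilde{\pi}\i(a))=n_{\widetilde{\pi}(\widetilde{\pi}\i(a)),r}=n_{a,r}$ when $\pi\i(\widetilde{\pi}(\widetilde{\pi}\i(a)))=\pi\i(a)>\pi\i(r)$, and equals $-1$ otherwise. The column factor $s_{\text{col}}(b)$ is $n_{b,r}$ or $-1$ according to whether $\pi\i(b)>\pi\i(r)$ or not. With this in hand, the three cases of Lemma~\ref{RLwRwtilde} give:
\begin{itemize}
\item If $\pi\i(b)<\pi\i(a)<\pi\i(r)$, then $s_{\text{row}}(\widetilde{\pi}\i(a))=-1$ and $s_{\text{col}}(b)=-1$, so the ratio is $1$, matching the first case of \eqref{RLwRwtilde0}.
\item If $\pi\i(b)<\pi\i(r)<\pi\i(a)$, then $s_{\text{row}}(\widetilde{\pi}\i(a))=n_{a,r}$ and $s_{\text{col}}(b)=-1$, so the ratio is $-n_{a,r}$, matching the second case.
\item If $\pi\i(r)<\pi\i(b)<\pi\i(a)$, then $s_{\text{row}}(\widetilde{\pi}\i(a))=n_{a,r}$ and $s_{\text{col}}(b)=n_{b,r}$, so the ratio is $n_{a,r}/n_{b,r}$, matching the third case.
\end{itemize}

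I would present the proof as a single paragraph that first invokes Lemma~\ref{RLwRwtilde}, then records the two easy identities for the scaling factors from Definition~\ref{rowcolscale} together with \eqref{rowscaleequiv} (noting the identity $\pitilde(\pitilde\i(a))=a$ from \eqref{pitilde}), and then reads off the three cases above. There is no real obstacle here: the only thing to keep straight is that $s_{\text{row}}$ is indexed by variables in $\{1,\dots,r-1\}$ while the defining formula involves $\widetilde{\pi}$, so one must use either \eqref{rowscale} composed with $\widetilde{\pi}\i$ or equivalently \eqref{rowscaleequiv} to reach the clean statement $s_{\text{row}}(\widetilde{\pi}\i(a))=n_{a,r}$ when $\pi\i(a)>\pi\i(r)$. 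Once this bookkeeping is in place the verification is immediate.
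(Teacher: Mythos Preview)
Your proposal is correct and follows essentially the same approach as the paper: compute the ratio $s_{\text{row}}(\widetilde{\pi}\i(a))/s_{\text{col}}(b)$ in each of the three cases of Lemma~\ref{RLwRwtilde} and verify that it matches the coefficient there. Your additional remark that $a,b<r$ forces $\pi\i(a),\pi\i(b)\neq\pi\i(r)$, so the three cases are exhaustive, is a helpful piece of bookkeeping that the paper leaves implicit.
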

\begin{proof}
It follows from Definition~\ref{rowcolscale} that for any $a$ and $b$ such that $1\leq a, b\leq r-1$ and $\pi\i(b)<\pi\i(a)$ we have
\[
\begin{aligned}
\frac{s_{\text{row}}\left(\widetilde{\pi}\i(a)\right)}{s_{\text{col}}(b)}=\begin{cases}
1 & \text{if} \quad \pi\i(b)<\pi\i(a)<\pi\i(r) \\
-n_{a,r} & \text{if} \quad \pi\i(b)<\pi\i(r)<\pi\i(a) \\
\frac{n_{a,r}}{n_{b,r}} & \text{if} \quad \pi\i(r)<\pi\i(b)<\pi\i(a).
\end{cases}
\end{aligned}
\]
By Lemma~\ref{RLwRwtilde}, the lemma follows.
\end{proof}
\begin{lem}\label{LTdiag}
    The matrix $T^L_i(w)$ satisfies
\begin{equation}\label{LTscale}
T^L_i(w)=\left(\begin{smallmatrix}
 s_{\text{row}}(r-i+1) &   \\
  & \ddots & \\
  & & s_{\text{row}}(r-1)
 \end{smallmatrix}
 \right)\cdot T_{i-1}(\widetilde{w})\cdot \left(\begin{smallmatrix}
 s_{\text{col}}(r-i+1) &  \\
  & \ddots & \\
  & & s_{\text{col}}(r-1)
 \end{smallmatrix}
 \right) ^{-1},
\end{equation} 
and the matrix $F^L_i(w)$ satisfies
\begin{equation}\label{LFscale}
F^L_i(w)=\left(\begin{smallmatrix}
 s_{\text{row}}(r-i) & & &\\
 & s_{\text{row}}(r-i+2) & & \\
  & & \ddots & \\
 & & & s_{\text{row}}(r-1)
 \end{smallmatrix}
 \right)\cdot F_{i-1}(\widetilde{w})\cdot \left(\begin{smallmatrix}
 s_{\text{col}}(r-i+1) &  \\
  & \ddots & \\
  & & s_{\text{col}}(r-1)
 \end{smallmatrix}
 \right) ^{-1}.
\end{equation}  
\end{lem}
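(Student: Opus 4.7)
The plan is to prove \eqref{LTscale} and \eqref{LFscale} entrywise, by comparing $T^L_i(w)$ with $T_{i-1}(\widetilde{w})$ (respectively $F^L_i(w)$ with $F_{i-1}(\widetilde{w})$) on free-variable entries via Lemmas~\ref{SwEwtildelem} and~\ref{RLwscale}, and then separately checking that the diagonal rescaling preserves the $0$ and $1$ entries. I describe \eqref{LTscale} in detail; the argument for \eqref{LFscale} is identical up to the list of row indices.

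By Lemma~\ref{SwEwtildelem}, the matrix $T^L_i(w)$ is obtained from $S_{i-1}(\widetilde{w})$ by replacing each free variable $n_{a,b}\in V_{\widetilde{w}}$ with $R^{(w)}_L(n_{a,b})$, while by Definition~\ref{defTi}, $T_{i-1}(\widetilde{w})$ is obtained from the same $S_{i-1}(\widetilde{w})$ by replacing $n_{a,b}$ with $R^{(\widetilde{w})}(n_{a,b})$. If such an $n_{a,b}$ appears in $S_{i-1}(\widetilde{w})$, it sits in row $\widetilde{\pi}\i(a)\in\{r-i+1,\ldots,r-1\}$ and column $b\in\{r-i+1,\ldots,r-1\}$. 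By Lemma~\ref{RLwscale}, replacing $R^{(\widetilde{w})}(n_{a,b})$ by $R^{(w)}_L(n_{a,b})$ multiplies that entry by $s_{\text{row}}(\widetilde{\pi}\i(a))/s_{\text{col}}(b)$, which is precisely the factor induced on row $\widetilde{\pi}\i(a)$ and column $b$ by the left and right diagonal matrices in \eqref{LTscale}.

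Next, I verify that the rescaling leaves the $0$ and $1$ entries of $T_{i-1}(\widetilde{w})$ intact, so that they continue to match the corresponding entries of $T^L_i(w)$, which are unchanged by the $R^{(w)}_L$-replacement by Definition~\ref{xLkdef}. Zero entries are trivially preserved. An entry equal to $1$ in $T_{i-1}(\widetilde{w})$ lies at position $(k,\widetilde{\pi}(k))$; its scaling factor is $s_{\text{row}}(k)/s_{\text{col}}(\widetilde{\pi}(k))$. Comparing \eqref{rowscale} with \eqref{colscale}, both numerator and denominator equal $n_{\widetilde{\pi}(k),r}$ when $\pi\i(\widetilde{\pi}(k))>\pi\i(r)$ and both equal $-1$ otherwise, so this ratio is always $1$. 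This proves \eqref{LTscale}, and the same argument with $r-i$ in place of $r-i+1$ proves \eqref{LFscale}. The one genuine bookkeeping task, which I expect to be the main source of potential confusion, is to verify using \eqref{phitilde} applied to the row lists \eqref{TiL} and \eqref{FiL} that the sets of $\widetilde{w}$-row indices labelling the rows of $T^L_i(w)$ and $F^L_i(w)$ are indeed $\{r-i+1,\ldots,r-1\}$ and $\{r-i,r-i+2,\ldots,r-1\}$ respectively, so that the diagonal entries $s_{\text{row}}(\cdot)$ in \eqref{LTscale} and \eqref{LFscale} are indexed exactly as needed.
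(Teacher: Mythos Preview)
Your proposal is correct and follows essentially the same approach as the paper's proof: both compare $T^L_i(w)$ and $T_{i-1}(\widetilde{w})$ entrywise via Lemma~\ref{SwEwtildelem}, use Lemma~\ref{RLwscale} for the free-variable entries, and verify directly from Definitions~\ref{rowcolscale} that $s_{\text{row}}(k)=s_{\text{col}}(\widetilde{\pi}(k))$ so that the $1$-entries are preserved. The bookkeeping you flag about the row indices of $T^L_i(w)$ and $F^L_i(w)$ matching $\{r-i+1,\ldots,r-1\}$ and $\{r-i,r-i+2,\ldots,r-1\}$ is exactly what is carried out in the proof of Lemma~\ref{SwEwtildelem} via \eqref{Sitildephi}, so it is already in hand.
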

\begin{proof}
By Lemma~\ref{SwEwtildelem}, it suffices to show that the left and right multiplication of $T_{i-1}(\widetilde{w})$ and $F_{i-1}(\widetilde{w})$ by the two diagonal matrices transforms the entries of $T_{i-1}(\widetilde{w})$ and $F_{i-1}(\widetilde{w})$ of the form $R^{(\wtilde)}(n_{\a})$, where $n_\a\in V_{\wtilde}$, into $R_L^{(w)}(n_{\a})$, while leaving all other entries unchanged. Since the $(a,b)$-entry of $T_{i-1}(\widetilde{w})$ is $(\widetilde{w}\utilde)_{r-i+a,r-i+b}=u_{\pitilde(r-i+a),r-i+b}$, the $(a,b)$-entry of the matrix on the right hand side of \eqref{LTscale} equals
\begin{equation}\label{diagTLform}
    \frac{s_{\text{row}}(r-i+a)}{s_{\text{col}}(r-i+b)}\cdot(\wtilde\utilde)_{r-i+a,r-i+b}.
\end{equation}
Similarly, the $(a,b)$-entry of the matrix $F_{i-1}(\widetilde{w})$ and the matrix on the right hand side of \eqref{LFscale} are given by
\begin{equation}
   (\wtilde\utilde)_{r-i+a',r-i+b} \eqand \frac{s_{\text{row}}(r-i+a')}{s_{\text{col}}(r-i+b)}\cdot(\wtilde\utilde)_{r-i+a',r-i+b},
\end{equation}
respectively, where $a'=0$ if $a=1$ and $a'=a$ if $a\geq 2$. The entries of $T_{i-1}(\widetilde{w})$ and $F_{i-1}(\widetilde{w})$ are either $0$, $1$, or $R^{\widetilde{w}}(n_\a)$ for some $n_\a \in V_{\widetilde{w}}$. Obviously, zero entries are not affected by the multiplication of diagonal matrices. If $(\widetilde{w}\utilde)_{r-i+a,r-i+b}=1$, then $\widetilde{\pi}(r-i+a)=r-i+b$, and by Definition~\ref{rowcolscale} this implies $s_{\text{row}}(r-i+a)=s_{\text{col}}(r-i+b)$. Hence the entries of $1$ in $T_{i-1}(\widetilde{w})$ and $F_{i-1}(\widetilde{w})$ remain unchanged as well. Finally, if $n_{\pitilde(r-i+a),r-i+b}\in V_{\wtilde}$, then by Lemma~\ref{RLwscale} we have
\[
 \frac{s_{\text{row}}(r-i+a)}{s_{\text{col}}(r-i+b)}R^{(\widetilde{w})}(n_{\widetilde{\pi}(r-i+a),r-i+b})=R^{(w)}_L(n_{\widetilde{\pi}(r-i+a),r-i+b}).
\]
This completes the proof.
\end{proof}
\begin{cor}\label{lem:detTTwtilde}
Let $1\leq i< r-\pi(r)+1$, and let $x_{r-i+s}$ be the row of $T_i$ which contains the highest nonzero entry of the rightmost column. We have
\[\det \left(T_i(w)\right)=(-1)^{s+i}\cdot u_{\pi(r-i+s),r}\cdot\frac{\prod\limits_{r-i+1\leq a\leq r-1} s_{\text{row}}(a)}{\prod\limits_{r-i+1\leq b\leq r-1} s_{\text{col}}(b)}\cdot \det \left(T_{i-1}(\widetilde{w})\right).\]
\end{cor}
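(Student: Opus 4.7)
The statement is a direct consequence of the two results that immediately precede it, so the plan is simply to combine them. The first ingredient is part (i) of Proposition~\ref{thm:rowcolop}, which already gives the formula
\[ \det(T_i(w)) = (-1)^{s+i}\cdot u_{\pi(r-i+s),r}\cdot \det(T_i^L(w)). \]
The only thing left to do is to re-express $\det(T_i^L(w))$ in terms of $\det(T_{i-1}(\widetilde{w}))$, and this is exactly what Lemma~\ref{LTdiag} provides.

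The plan is therefore to take the determinant of both sides of identity \eqref{LTscale}. Since the factors on the left and right of $T_{i-1}(\widetilde{w})$ are diagonal, their determinants are products of diagonal entries, yielding
\[ \det(T_i^L(w)) = \Bigl(\prod_{r-i+1\leq a\leq r-1} s_{\mathrm{row}}(a)\Bigr) \cdot \det(T_{i-1}(\widetilde{w})) \cdot \Bigl(\prod_{r-i+1\leq b\leq r-1} s_{\mathrm{col}}(b)\Bigr)^{-1}. \]
Substituting this expression into the formula from Proposition~\ref{thm:rowcolop}(i) produces the claimed identity verbatim.

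There is essentially no obstacle: all the substantive work has already been done in Proposition~\ref{thm:rowcolop} (the row/column expansion around the rightmost column) and in Lemma~\ref{LTdiag} (the reconciliation of $R_L^{(w)}$ and $R^{(\widetilde w)}$ via the diagonal rescalings from Definition~\ref{rowcolscale}). The only point worth noting for the reader is that the hypothesis $i < r-\pi(r)+1$ is exactly the one needed to apply Proposition~\ref{thm:rowcolop}, ensuring that the bottom row of $T_i$ does not contain $1_{\pi(r)}$ and that the row $x_{r-i+s}$ with the highest nonzero entry of the rightmost column is well defined. The statement is then obtained in one line.
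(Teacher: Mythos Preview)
Your proposal is correct and follows exactly the same approach as the paper's proof, which simply states that the corollary is a direct consequence of Proposition~\ref{thm:rowcolop} and Lemma~\ref{LTdiag}. You have just spelled out the substitution in slightly more detail.
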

\begin{proof}
This is a direct consequence of Proposition~\ref{thm:rowcolop} and Lemma~\ref{LTdiag}.
\end{proof}
\subsection{Proof of Proposition~\ref{detTiprop}}\label{sec:detTiproppf}
In this section, we prove Proposition~\ref{detTiprop} for case 2 under the assumption that the proposition holds for every Weyl group element of $\GL(r')$ with $2\leq r'<r$. Together with Lemma~\ref{lem:basecase} and Proposition~\ref{detTicase1}, this completes the proof of Proposition~\ref{detTiprop}.
\begin{lem}\label{lem:Rrm}
If $n_{\pi(a),r}\in V_w$, then
$u_{\pi(a),r}=\prod\limits_{\pi\i(r)<j\leq a} n_{\pi(j),r}$ and $\frac{u_{\pi(a-1),r}}{u_{\pi(a),r}}=\frac{1}{n_{\pi(a),r}}$.
\end{lem}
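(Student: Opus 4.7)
The plan is to compute $R(n_{\pi(a),r})$ directly from Definition~\ref{def:Rformula}, using that a free variable in the rightmost column of $wn$ has an especially simple submatrix $M_w$, and hence an especially simple set of origins, destinations, and paths.

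First I would identify $M_w(n_{\pi(a),r})$. Since the second index is $b=r$, Definition~\ref{mwna} tells us that $M_w(n_{\pi(a),r})$ consists of a single column (the $r$-th column of $wn$), running from the $\pi\i(r)$-th row (which contains $1_r$ at the top) down to the $a$-th row (which contains $n_{\pi(a),r}$ at the bottom). The intermediate entries in this column are precisely the free variables $n_{\pi(j),r}$ for $\pi\i(r)<j<a$, because $n_{\pi(j),r}\in V_w$ iff $(\pi(j),r)\in\Inv(\pi\i)$ iff $j>\pi\i(r)$. Consequently $D(n_{\pi(a),r})=\{1_r\}$ by Definition~\ref{Ddef}, so $t_\alpha=0$, and $O(n_{\pi(a),r})=\{n_{\pi(a),r}\}$ by Definition~\ref{Odef} (the bottom origin of the single column, with no other destinations to contribute further origins).

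With a single origin and a single destination, ${\mathcal P}(n_{\pi(a),r})$ contains exactly one set of paths, the unique upward path through the column. By Definition~\ref{def:upath}, $u(\mathbf{p})$ is the product of the free variables traversed, namely $n_{\pi(a),r}\cdot n_{\pi(a-1),r}\cdots n_{\pi(\pi\i(r)+1),r}=\prod_{\pi\i(r)<j\leq a}n_{\pi(j),r}$ (here $1_r$ is excluded since it is not a free variable). Next I would compute the denominator $\rho(1_r)$ of \eqref{defR}: the row containing $1_r$ is row $\pi\i(r)$, whose $(\pi\i(r),j)$-entry is $n_{r,j}$, and $n_{r,j}=0$ for $j<r$ while $n_{r,r}=1_r$; therefore $\rho(1_r)=1$. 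Substituting into Definition~\ref{def:Rformula} gives
\[
u_{\pi(a),r}\;=\;R(n_{\pi(a),r})\;=\;(-1)^0\cdot\frac{\prod_{\pi\i(r)<j\leq a}n_{\pi(j),r}}{1}\;=\;\prod_{\pi\i(r)<j\leq a}n_{\pi(j),r},
\]
which is the first claim.

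The second identity follows by taking the telescoping quotient: dividing the product up to $a-1$ by the product up to $a$ cancels all factors with $\pi\i(r)<j\leq a-1$ and leaves $1/n_{\pi(a),r}$. In the edge case $a-1=\pi\i(r)$, the entry $(wn)_{a-1,r}$ is $1_r$ rather than a free variable, so $u_{\pi(a-1),r}$ is interpreted as the empty product $1$, and the identity still holds. There is no real obstacle here; the whole argument is a direct unwinding of the definitions of $M_w$, $D$, $O$, $P$, and $\rho$ specialized to the rightmost column, where the geometry is trivial.
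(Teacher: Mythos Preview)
Your proof is correct and follows exactly the same approach as the paper's own argument, which simply notes that $\mathcal{P}(n_{\pi(a),r})$ consists of a single vertical path from $n_{\pi(a),r}$ to $1_r$ and that the second equation follows from the first. You have merely unpacked the definitions of $M_w$, $D$, $O$, $\rho$, and $u(\mathbf{p})$ in full detail, including the edge case $a-1=\pi\i(r)$.
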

\begin{proof}
The first equation is a direct consequence of the fact that $\mathcal{P}(n_{\pi(a),r})$ consists of a single vertical path from $n_{\pi(a),r}$ to $1_r$. The second equation follows directly from the first. 
\end{proof}
For the next lemma, recall Definition~\ref{Deltaidef}. 
\begin{lem}\label{Deltailem}
For any $2\leq i\leq r-1$ we have \[\frac{\Delta_i(w)}{\Delta_{i-1}(\widetilde{w})}= \prod_{\substack{\pi\i(r)<j\leq r,\\ 1\leq \pi(j)<r-i+1}} n_{\pi(j),r}.\]
\end{lem}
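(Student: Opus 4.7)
The plan is to compare the two products factor by factor, using Lemma~\ref{wntildewtilden} to identify the shared free variables and reading off the leftover factors directly from the definitions.

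First I would unfold the definitions. By Definition~\ref{Deltaidef},
\[
\Delta_i(w)=\prod_{\substack{1\leq a<r-i+1\leq b\leq r\\ (a,b)\in \Inv(\pi\i)}} n_{a,b}, \qquad \Delta_{i-1}(\wtilde)=\prod_{\substack{1\leq \atilde<r-i+1\leq \btilde\leq r-1\\ (\atilde,\btilde)\in \Inv(\pitilde\i)}} n_{\atilde,\btilde}.
\]
Next I would observe that by Lemma~\ref{wntildewtilden} the free variables of $\wtilde\ntilde$ are exactly the free variables of $wn$ whose column index is strictly less than $r$; in particular, for $\atilde,\btilde<r$ the membership $(\atilde,\btilde)\in \Inv(\pitilde\i)$ is equivalent to $(\atilde,\btilde)\in \Inv(\pi\i)$. (This can also be checked directly from \eqref{pitildeinv} by a short case analysis on whether $\pi\i(\atilde)$ and $\pi\i(\btilde)$ lie above or below $\pi\i(r)$.) Consequently
\[
\Delta_{i-1}(\wtilde)=\prod_{\substack{1\leq a<r-i+1\leq b\leq r-1\\ (a,b)\in \Inv(\pi\i)}} n_{a,b}.
\]

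Then I would split the product for $\Delta_i(w)$ according to whether $b<r$ or $b=r$:
\[
\Delta_i(w)=\Bigl(\prod_{\substack{1\leq a<r-i+1\leq b\leq r-1\\ (a,b)\in \Inv(\pi\i)}} n_{a,b}\Bigr)\cdot \Bigl(\prod_{\substack{1\leq a<r-i+1\\ (a,r)\in \Inv(\pi\i)}} n_{a,r}\Bigr)=\Delta_{i-1}(\wtilde)\cdot \prod_{\substack{1\leq a<r-i+1\\ (a,r)\in \Inv(\pi\i)}} n_{a,r}.
\]
Finally I would reindex the leftover product. The condition $(a,r)\in \Inv(\pi\i)$ is $\pi\i(a)>\pi\i(r)$, so setting $j=\pi\i(a)$ (equivalently $a=\pi(j)$) the product becomes $\prod_{\pi\i(r)<j\leq r,\, 1\leq \pi(j)<r-i+1} n_{\pi(j),r}$, giving the claimed formula.

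There is no real obstacle here; the only subtlety is the identification $\Inv(\pitilde\i)=\Inv(\pi\i)$ on indices $<r$, which is immediate once one appeals to Lemma~\ref{wntildewtilden} instead of unfolding \eqref{pitildeinv}.
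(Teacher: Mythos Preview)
Your proof is correct and follows essentially the same approach as the paper: split $\Delta_i(w)$ into the $b\leq r-1$ factors and the $b=r$ factors, identify the former with $\Delta_{i-1}(\wtilde)$ via Lemma~\ref{wntildewtilden}, and reindex the latter by $j=\pi\i(a)$. The only cosmetic difference is that the paper performs the split before invoking Lemma~\ref{wntildewtilden}, whereas you do it after.
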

\begin{proof}
Since $n_{a,r}$ is a free variable if and only if $\pi\i(a)>\pi\i(r)$, we can write $\Delta_i(w)$ as
\[
   \prod\limits_{\substack{1\leq a<r-i+1\leq b\leq r-1, \\ (a,b)\in \Inv(\pi\i)}} n_{a,b}\cdot \prod\limits_{\substack{1\leq a<r-i+1, \\ \pi\i(a)>\pi\i(r)}} n_{a,r}.
\]
Writing $a$ in the second product as $\pi(j)$, we obtain
\[
\Delta_i(w)=\prod\limits_{\substack{1\leq a<r-i+1\leq b\leq r-1, \\ (a,b)\in \Inv(\pi\i)}} n_{a,b}\cdot \prod\limits_{\substack{\pi\i(r)<j\leq r, \\ 1\leq \pi(j)<r-i+1}} n_{\pi(j),r}.
\]
By Lemma~\ref{wntildewtilden}, the set of free variables $n_{a,b}\in V_w$ with $b\leq r-1$ equals the set $V_{\wtilde}$. Thus the first product equals
$\Delta_{i-1}(\widetilde{w})$. The lemma follows.
\end{proof}
\begin{prop}[Proposition~\ref{detTiprop} for case 2]\label{detTicase2}
Suppose that Proposition~\ref{detTiprop} holds for every Weyl group element $w'$ of $\GL(r')$ with $2\leq r'<r$. Let $1\leq i< r-\pi(r)+1$. We have
\[\det \bigl(T_i(w)\bigr)=\det\bigl( L_{r-i}(w)\bigr)\cdot \Delta_i(w).\] 
\end{prop}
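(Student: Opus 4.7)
The plan is to combine Corollary~\ref{lem:detTTwtilde} with the inductive hypothesis applied to $\wtilde$, and then match the resulting expression against $\det L_{r-i}(w)\cdot \Delta_i(w)$. Explicitly, by Corollary~\ref{lem:detTTwtilde} and induction,
\[
\det T_i(w) \;=\; (-1)^{s+i}\,u_{\pi(r-i+s),r}\,\frac{\prod_{a=r-i+1}^{r-1} s_{\text{row}}(a)}{\prod_{b=r-i+1}^{r-1} s_{\text{col}}(b)}\,\det L_{r-i}(\wtilde)\,\Delta_{i-1}(\wtilde),
\]
so everything reduces to showing that the ratio
\[
\mathcal R \;:=\; (-1)^{s+i}\,u_{\pi(r-i+s),r}\,\frac{\prod_{a=r-i+1}^{r-1} s_{\text{row}}(a)}{\prod_{b=r-i+1}^{r-1} s_{\text{col}}(b)}
\]
equals $\bigl(\det L_{r-i}(w)/\det L_{r-i}(\wtilde)\bigr)\cdot\bigl(\Delta_i(w)/\Delta_{i-1}(\wtilde)\bigr)$.

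First I would handle the monomial part. Using the equivalent form~\eqref{rowscaleequiv} for $s_{\text{row}}$ together with Lemma~\ref{lem:Rrm}, I split into the two regimes $\pi\i(r)\ge r-i+1$ (in which $s=\pi\i(r)-r+i$ and $u_{\pi(r-i+s),r}=1$) and $\pi\i(r)<r-i+1$ (in which $s=1$ and $u_{\pi(r-i+1),r}=\prod_{\pi\i(r)<j\le r-i+1} n_{\pi(j),r}$). In either regime a direct calculation gives
\[
u_{\pi(r-i+s),r}\cdot\prod_{a=r-i+1}^{r-1} s_{\text{row}}(a) \;=\; (-1)^{\epsilon}\prod_{\pi\i(r)<j\le r} n_{\pi(j),r},
\]
where $\epsilon$ collects the $-1$ entries of $s_{\text{row}}$. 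Re-indexing $b=\pi(j)$ in $\prod_b s_{\text{col}}(b)$ identifies its monomial part with $\prod_{\substack{j>\pi\i(r),\, r-i+1\le \pi(j)\le r-1}} n_{\pi(j),r}$, and cancellation against the numerator leaves exactly $\prod_{\substack{j>\pi\i(r),\, 1\le \pi(j)\le r-i}} n_{\pi(j),r}$, which is $\Delta_i(w)/\Delta_{i-1}(\wtilde)$ by Lemma~\ref{Deltailem}.

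Next I would track the signs. Since $\wtilde$ is obtained from $w$ by deleting the row and column that carry $1_r$, the matrix $L_{r-i}(\wtilde)$ is gotten from $L_{r-i}(w)$ by deleting the row and column containing $1_r$. In $L_{r-i}(w)$ the entry $1_r$ occupies row $\pi\i(r)-k_1$ and column $i$, where $k_1=|\{1\le k\le r-i:\pi\i(k)<\pi\i(r)\}|$; hence $\det L_{r-i}(w)=(-1)^{i+\pi\i(r)-k_1}\det L_{r-i}(\wtilde)$. On the other side, letting $c=|\{r-i+1\le b\le r-1:\pi\i(b)\le\pi\i(r)\}|$ gather the $-1$ factors coming from $s_{\text{col}}$, a set-theoretic count using $\pi\i(\{r-i+1,\ldots,r-1\})=\{1,\ldots,r\}\setminus\bigl(\pi\i(\{1,\ldots,r-i\})\cup\{\pi\i(r)\}\bigr)$ gives $c=\pi\i(r)-k_1-1$. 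Combining this with $(-1)^{s+i}$ and the $\epsilon$ above, the total exponent simplifies to $i+\pi\i(r)-k_1$ in both regimes, matching the sign from the block expansion.

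The main obstacle is this sign bookkeeping: the expression for $s$, for $u_{\pi(r-i+s),r}$, and for the number of $-1$ contributions in $\prod s_{\text{row}}$ all change between the two regimes, and one has to verify that the parity coming from $c$ combined with $(-1)^{s+i}$ produces precisely $(-1)^{i+\pi\i(r)-k_1}$ uniformly. Once the sign identity is verified in each regime, substituting back yields $\det T_i(w)=\det L_{r-i}(w)\cdot\Delta_i(w)$, completing the induction step and, together with Lemma~\ref{lem:basecase} and Proposition~\ref{detTicase1}, the proof of Proposition~\ref{detTiprop}.
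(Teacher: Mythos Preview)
Your approach is the same as the paper's: apply Corollary~\ref{lem:detTTwtilde} together with the inductive hypothesis for $\wtilde$, simplify the monomial part using Lemma~\ref{lem:Rrm} and Lemma~\ref{Deltailem}, and match signs by locating $1_r$ inside $L_{r-i}(w)$ (your quantity $c=\pi\i(r)-k_1-1$ is exactly the paper's $x$, and the resulting parity $i+\pi\i(r)-k_1$ is the paper's $i+1+x$). The one omission is the base case $i=1$, which the paper handles directly since Lemma~\ref{Deltailem} and the inductive hypothesis on $T_{i-1}(\wtilde)$ both require $i\ge 2$; apart from that, your explicit treatment of both regimes $\pi\i(r)\gtrless r-i+1$ is in fact slightly more thorough than the paper's exposition.
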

\begin{proof}
First, assume that $i=1$. By Definition~\ref{defTi}, $T_1$ consists of a single entry $u_{\pi(r),r}$. It follows from the inequality $r-i+1>\pi(r)$ that $n_{\pi(r),r}\in V_w$, and from Definition~\ref{Deltaidef} and Lemma~\ref{lem:Rrm} that $u_{\pi(r),r}=\Delta_1(w)$. The proposition follows. \par
From now on, assume that $i\geq 2$. By the induction hypothesis, we have
\[\det \left(T_{i-1} (\widetilde{w})\right)=\det L_{r-i}(\widetilde{w})\cdot \Delta_{i-1}(\widetilde{w}).
\]
Let $x_{r-i+s}$ be the row of $T_i$ which contains the highest nonzero entry of the rightmost column. By Corollary~\ref{lem:detTTwtilde} and Lemma~\ref{Deltailem}, we deduce
\begin{equation}\label{detTired1}
     \det \left(T_i(w)\right)
     =(-1)^{s+i}\cdot \det L_{r-i}(\widetilde{w}) \cdot \frac{u_{\pi(r-i+s),r}\cdot\prod\limits_{r-i+1\leq a\leq r-1} s_{\text{row}}(a)}{\prod\limits_{\substack{\pi\i(r)<j\leq r,\\ 1\leq \pi(j)<r-i+1}} n_{\pi(j),r}\cdot\prod\limits_{r-i+1\leq b\leq r-1} s_{\text{col}}(b)}\cdot\Delta_i(w).
\end{equation}
We shall show that the fractional term equals $\pm 1$ after cancellation. Since the entry $1_r$ is in the $\pi\i(r)$-th row of $wu$, $1_r$ is an entry of $T_i(w)$ precisely when $r-i+1\leq \pi\i(r)$. It follows that $r-i+s=\max\bigl(r-i+1,\pi\i(r)\bigr)$.
\par
First, if $\pi\i(r)<r-i+1$, so that $1_r$ is not in $T_i(w)$, then we have $s=1$ and $n_{\pi(r-i+1),r}\in V_w$. By Lemma~\ref{lem:Rrm}, we have
\[
u_{\pi(r-i+s),r}=\prod\limits_{\pi\i(r)<j\leq r-i+s}n_{\pi(j),r}.\]
Also, by \eqref{rowscaleequiv}, the product $\prod_{r-i+1\leq a\leq r-1} s_{\text{row}}(a)$ equals
\[\prod_{r-i+1\leq a_1< r-i+s} s_{\text{row}}(a_1)\cdot\prod_{r-i+s\leq a_2\leq r-1}s_{\text{row}}(a_2)=(-1)^{s-1}\cdot \prod_{r-i+s\leq a_2\leq r-1} n_{\pi(a_2+1),r}.\]
We conclude that
\begin{equation}\label{detTinum}
 u_{\pi(r-i+s),r}\cdot\prod\limits_{r-i+1\leq a\leq r-1} s_{\text{row}}(a)=(-1)^{s-1}\cdot\prod\limits_{\pi\i(r)<j\leq r} n_{\pi(j),r}.   
\end{equation}
This gives a simplified expression for the numerator. \par
By a similar approach, we simplify the denominator. Let $S_{\text{col}}$ be the set defined by
\begin{equation}\label{colset1}
S_{\text{col}}=\{r-i+1\leq j\leq r-1 \mid \pi\i(j)<\pi\i(r) \},\end{equation}
and let $x$ be the number of elements the set $S_{\text{col}}$. By \eqref{colscale}, we have
\[
\begin{aligned}
    \prod_{r-i+1\leq b\leq r-1} s_{\text{col}}(b)&=\prod\limits_{\substack{r-i+1\leq b\leq r-1, \\ \pi\i(b)< \pi\i(r)}} s_{\text{col}}(b)\cdot \prod\limits_{\substack{r-i+1\leq b\leq r-1, \\ \pi\i(b)>\pi\i(r)}}  s_{\text{col}}(b)=(-1)^x\cdot\prod\limits_{\substack{r-i+1\leq b\leq r-1, \\ \pi\i(r)<\pi\i(b)}}n_{b,r}.
\end{aligned}
\]
Substituting $b$ with $\pi(j)$, we obtain
\[\prod\limits_{r-i+1\leq b\leq r-1} s_{\text{col}}(b)  =(-1)^x\cdot \prod\limits_{\substack{\pi\i(r)<j\leq r, \\ r-i+1\leq\pi(j)\leq r-1}}n_{\pi(j),r}.\] 
Thus
\begin{equation}\label{detTidenom}
\prod\limits_{\substack{\pi\i(r)<j\leq r,\\ 1\leq \pi(j)<r-i+1}} n_{\pi(j),r}\cdot\prod\limits_{r-i+1\leq b\leq r-1} s_{\text{col}}(b)=(-1)^x \cdot \prod\limits_{\substack{\pi\i(r)<j\leq r, \\ 1\leq\pi(j)\leq r-1}}n_{\pi(j),r}.
\end{equation}
The condition $1\leq \pi(j)\leq r-1$ is automatically satisfied whenever $\pi\i(r)<j\leq r$. By \eqref{detTinum} and \eqref{detTidenom}, we conclude that the fractional term on the right hand side of \eqref{detTired1} equals $(-1)^{s-1+x}$. The proposition follows once we show that
\begin{equation}\label{detLw}
    (-1)^{i+1+x}\cdot \det L_{r-i}(\widetilde{w})=\det L_{r-i}(w).
\end{equation}
Recall from Definition~\ref{def:Lw} that $L_{r-i}(w)$ is obtained from $w$ by removing the leftmost $(r-i)$ columns and the $(r-i)$ rows which contain $1_1,\ 1_2, \ \ldots \ ,\ 1_{r-i}$ from $w$. For $j<\pi\i(r)$, the $j$-th row is not removed if and only if $\pi(j)>r-i$. By the definition \eqref{colset1} of $S_{\text{col}}$, the $j$-th row of $w$ remains in $L_{r-i}(w)$ precisely when $\pi(j)$ is in the set $S_{\text{col}}$. Since $x$ is the number of elements of the set $S_{\text{col}}$, we deduce that precisely $x$ rows above $1_r$ remain in $L_{r-i}(w)$. Therefore, $1_r$ is in the $(x+1,i)$-entry of $L_{r-i}(w)$. Since removing the rightmost column and the row containing $1_r$ from $L_{r-i}(w)$ gives $L_{r-i}(\widetilde{w})$, \eqref{detLw} follows.
\end{proof}

\subsection{Proof of Proposition~\ref{supdiagprop}}\label{sec:supdiagproppf} In this section, we prove Proposition~\ref{supdiagprop} under the assumption that the proposition holds for every Weyl group element $w'$ of $\GL(r')$ with $2\leq r'<r$. Together with Lemma~\ref{lem:basecase} and Proposition~\ref{supdiagcase1}, this completes the proof of Proposition~\ref{supdiagprop}.
Assume that $\pi(r)<i+1$, so that $T_{r-i}$ and $F_{r-i}$ do not contain $1_{\pi(r)}$.
\begin{lem}\label{lem:samelevel}
Let $n_{a,b}\in V_w$. If $n_{a,b}$ is in the rightmost column of $wn$, then
    $\norm{n_{a,b}}_{w}=\pi\i(a)$.
If $b<r$, then $\norm{n_{a,b}}_{w}= \norm{n_{a,b}}_{\wtilde}$.
\end{lem}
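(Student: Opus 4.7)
The plan is to unfold Definition~\ref{def:normdef} on both sides and compare the three pieces $\pi\i(a)$ vs.\ $\pitilde\i(a)$, the column index (which is the same in either matrix), and the counting function $h(\pi\i(a),b)$ under the operation of passing from $w$ to $\wtilde$. The key observation is that the only $1$ removed when forming $\wntilde$ from $wn$ is $1_r$, since the $\pi\i(r)$-th row contains $1_r$ as its unique entry of $1$ and the $r$-th column contains $1_r$ as its unique entry of $1$.

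First, when $b=r$ the quantity $h(\pi\i(a),r)$ counts entries of $1$ strictly to the right of the last column, so $h(\pi\i(a),r)=0$ and Definition~\ref{def:normdef} gives $\norm{n_{a,b}}_w=\pi\i(a)+r+0-r=\pi\i(a)$. This handles the rightmost column case.

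For the case $b<r$, I will split according to the relative vertical position of $n_{a,b}$ and $1_r$. If $\pi\i(a)<\pi\i(r)$, so $n_{a,b}$ lies strictly above the row of $1_r$, then by \eqref{pitildeinv} we have $\pitilde\i(a)=\pi\i(a)$; moreover $1_r$ is both below row $\pi\i(a)$ and to the right of column $b$, so it contributes $1$ to $h^{(w)}(\pi\i(a),b)$, whereas removing the $\pi\i(r)$-th row and the $r$-th column deletes exactly this contribution, giving $h^{(\wtilde)}(\pitilde\i(a),b)=h^{(w)}(\pi\i(a),b)-1$. Substituting into the two definitions:
\begin{equation*}
\norm{n_{a,b}}_{\wtilde}=\pi\i(a)+b+\bigl(h^{(w)}(\pi\i(a),b)-1\bigr)-(r-1)=\norm{n_{a,b}}_w.
\end{equation*}
If instead $\pi\i(a)>\pi\i(r)$, then by \eqref{pitildeinv} we have $\pitilde\i(a)=\pi\i(a)-1$, and $1_r$ sits above row $\pi\i(a)$, so it is not counted in $h^{(w)}(\pi\i(a),b)$ to begin with; furthermore the remaining $1$'s below row $\pi\i(a)$ and to the right of column $b$ are preserved (they lie neither in the removed row nor in the removed column), and their relative vertical positions with respect to the shifted row index are unchanged. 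Hence $h^{(\wtilde)}(\pitilde\i(a),b)=h^{(w)}(\pi\i(a),b)$, and
\begin{equation*}
\norm{n_{a,b}}_{\wtilde}=\bigl(\pi\i(a)-1\bigr)+b+h^{(w)}(\pi\i(a),b)-(r-1)=\norm{n_{a,b}}_w.
\end{equation*}

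There is no real obstacle here: the proof is a bookkeeping exercise, and the only point requiring care is the $\pm 1$ shift, which arises in two different places in the two subcases (in the $h$ term when $\pi\i(a)<\pi\i(r)$, and in the row index when $\pi\i(a)>\pi\i(r)$) and which is in both cases compensated by the shift from $r$ to $r-1$ in the definition of the level.
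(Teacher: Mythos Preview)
Your proof is correct and follows exactly the approach the paper intends: it omits the proof as analogous to Lemma~\ref{lem:whatlevel}, and your argument is precisely the dual of that proof, splitting into cases according to the row of $n_{a,b}$ relative to $1_r$ (just as Lemma~\ref{lem:whatlevel} splits according to the column relative to $1_{\pi(r)}$) and tracking the $\pm 1$ shift in each piece of Definition~\ref{def:normdef}.
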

We omit the proof because this lemma is analogous to Lemma~\ref{lem:whatlevel} for case 1, and the proof follows the same approach.
\begin{cor}\label{cor:levelset}
For $1\leq j\leq r-1$, we have
\begin{equation}\label{levelsetcase2}
    B_{w}(j)=\begin{cases}
        B_{\wtilde}(j) & \text{if} \quad j\leq \pi\i(r) \\
        B_{\wtilde}(j) \cup \{n_{\pi(j),r}\} & \text{if} \quad j> \pi\i(r).
    \end{cases}
    \end{equation}
\end{cor}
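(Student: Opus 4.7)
The plan is to deduce the corollary by decomposing $V_w$ into two disjoint parts along the lines suggested by Lemma~\ref{wntildewtilden}: the free variables lying in the rightmost column of $wn$, namely those of the form $n_{a,r}$, and the remaining free variables, which by Lemma~\ref{wntildewtilden} coincide with $V_{\wtilde}$. For each part I then read off the level using Lemma~\ref{lem:samelevel} and collect those of level exactly $j$.

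First, for the variables in $V_{\wtilde}$: Lemma~\ref{lem:samelevel} says that $\norm{n_{a,b}}_w = \norm{n_{a,b}}_{\wtilde}$ whenever $b < r$. Hence the level-$j$ elements of $V_w$ that lie in $V_{\wtilde}$ are exactly the level-$j$ elements of $V_{\wtilde}$, contributing $B_{\wtilde}(j)$ to $B_w(j)$ regardless of the case.

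Second, for the variables in the rightmost column: a free variable $n_{a,r}\in V_w$ must satisfy $(a,r)\in\Inv(\pi\i)$, which forces $a<r$ and $\pi\i(a)>\pi\i(r)$. By Lemma~\ref{lem:samelevel}, such a variable has level $\pi\i(a)$, so it contributes to $B_w(j)$ precisely when $\pi\i(a)=j$, i.e.\ when $a=\pi(j)$. This requires $j>\pi\i(r)$ (which also automatically ensures $\pi(j)\neq r$, so the variable is genuinely free). If $j\leq \pi\i(r)$ no such variable exists; if $j>\pi\i(r)$ then $n_{\pi(j),r}$ is the unique contribution.

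Combining the two contributions yields the claimed equality in each case. There is no real obstacle here: the statement is essentially bookkeeping from Lemmas~\ref{wntildewtilden} and \ref{lem:samelevel}. The only minor point to be careful about is to verify that $n_{\pi(j),r}$ is indeed a free variable of $w$ when $j>\pi\i(r)$, which amounts to checking $\pi(j)<r$ (clear since $j\neq\pi\i(r)$) and $\pi\i(\pi(j))=j>\pi\i(r)$ (immediate from the hypothesis).
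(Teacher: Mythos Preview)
Your proof is correct and follows essentially the same approach as the paper: both decompose $V_w$ into $V_{\wtilde}$ and the rightmost-column variables via Lemma~\ref{wntildewtilden}, then read off the levels using Lemma~\ref{lem:samelevel}. Your version is slightly more explicit in verifying that $n_{\pi(j),r}$ is genuinely a free variable when $j>\pi\i(r)$, which the paper leaves implicit.
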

\begin{proof}
By Lemma~\ref{wntildewtilden}, the set of free variables in $V_w$ that are not in the rightmost column of $wn$ equals the set $V_{\wtilde}$. By Lemma~\ref{lem:samelevel}, $n_{a,b}\in V_w$ with $b<r$ is in $B_w(j)$ if and only if $n_{a,b}\in B_{\wtilde}(j)$, and a free variable $n_{a,r}\in V_w$ is in $B_w(j)$ if and only if $a=\pi(j)$. Since $n_{\pi(j),r}\in V_w$ precisely when $j>\pi\i(r)$, the lemma follows.
\end{proof}
\begin{prop}[Proposition~\ref{supdiagprop} for case 2]\label{supdiagcase2}
Let $\pi(r)< i+1 \leq r$. Suppose that Proposition~\ref{supdiagprop} holds for every Weyl group element $w'$ of $\GL(r')$ with $2\leq r'<r$. We have
\[\frac{\det\left(F_{r-i}(w)\right)}{\det\left(T_{r-i}(w)\right)}=\sum_{n_{a,b}\in B_w(i+1)} \frac{1}{n_{a,b}}.\]
\end{prop}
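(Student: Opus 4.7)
The plan is to follow the template of Proposition~\ref{supdiagcase1} but to reduce from $w$ to $\wtilde$ (by deleting the rightmost column and the row containing $1_r$) rather than to $\what$. First I would apply Proposition~\ref{thm:rowcolop} with size parameter $r-i$: part~(i) expresses $\det\bigl(T_{r-i}(w)\bigr)$ as a sign times $u_{\pi(i+s),r}\cdot\det\bigl(T^L_{r-i}(w)\bigr)$, and part~(ii) gives a parallel expression for $\det\bigl(F_{r-i}(w)\bigr)$, split into three sub-cases according as $i+1$ is greater than, equal to, or less than $\pi\i(r)$.

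Second, I would use Lemma~\ref{LTdiag} to write $T^L_{r-i}(w)=D_{\text{row}} T_{r-i-1}(\wtilde) D_{\text{col}}\i$ and $F^L_{r-i}(w)=D'_{\text{row}} F_{r-i-1}(\wtilde) D_{\text{col}}\i$, where $D'_{\text{row}}$ and $D_{\text{row}}$ agree except in their top diagonal entry. In the ratio $\det(F^L_{r-i})/\det(T^L_{r-i})$ the column scalings cancel outright, leaving only the single factor $s_{\text{row}}(i)/s_{\text{row}}(i+1)$. At this point the induction hypothesis applied to $\wtilde\in\GL(r-1)$ yields
\[\frac{\det F_{r-i-1}(\wtilde)}{\det T_{r-i-1}(\wtilde)}=\sum_{n_{\atilde,\btilde}\in B_{\wtilde}(i+1)}\frac{1}{n_{\atilde,\btilde}},\]
and Corollary~\ref{cor:levelset} identifies $B_w(i+1)$ with either $B_{\wtilde}(i+1)$ or $B_{\wtilde}(i+1)\cup\{n_{\pi(i+1),r}\}$ according as $i+1\leq\pi\i(r)$ or $i+1>\pi\i(r)$.

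The remainder is direct bookkeeping in each of the three sub-cases, using Lemma~\ref{lem:Rrm} to simplify $u_{\pi(a+1),r}/u_{\pi(a),r}=n_{\pi(a+1),r}$ and the alternate form \eqref{rowscaleequiv} of $s_{\text{row}}$. When $i+1<\pi\i(r)$, both $s_{\text{row}}(i)$ and $s_{\text{row}}(i+1)$ equal $-1$, the $s=\pi\i(r)-i$ prefactor $u_{\pi(\pi\i(r)),r}=1$, the sign $(-1)^{r-\pi\i(r)}$ from Proposition~\ref{thm:rowcolop}(ii) is exactly matched by the sign from part~(i), and thus $\det(F_{r-i})/\det(T_{r-i})$ equals the $\wtilde$-ratio and matches $B_w(i+1)=B_{\wtilde}(i+1)$. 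When $i+1=\pi\i(r)$, the factor $u_{\pi(i+2),r}=n_{\pi(i+2),r}$ from Proposition~\ref{thm:rowcolop}(ii) cancels against $s_{\text{row}}(i+1)=n_{\pi(i+2),r}$, producing again just the $\wtilde$-ratio. Finally, when $i+1>\pi\i(r)$, the leading summand $\frac{1}{n_{\pi(i+1),r}}\det(T_{r-i})$ in Proposition~\ref{thm:rowcolop}(ii) contributes exactly the extra term $1/n_{\pi(i+1),r}$ demanded by Corollary~\ref{cor:levelset}, while the second summand collapses to the $\wtilde$-ratio after the ratios $u_{\pi(i+2),r}/u_{\pi(i+1),r}=n_{\pi(i+2),r}$ and $s_{\text{row}}(i)/s_{\text{row}}(i+1)=n_{\pi(i+1),r}/n_{\pi(i+2),r}$ cancel against each other.

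The main obstacle is keeping all the signs and rational prefactors straight across the three sub-cases simultaneously: the signs $(-1)^{s+(r-i)}$ from Proposition~\ref{thm:rowcolop}(i), the case-dependent signs $(-1)^{r-i}$ or $(-1)^{r-\pi\i(r)}$ from part~(ii), the case-dependent values of $s_{\text{row}}$, and the products $u_{\pi(a),r}=\prod_{\pi\i(r)<j\leq a}n_{\pi(j),r}$ from Lemma~\ref{lem:Rrm} must all cancel simultaneously with the scaling factors from Lemma~\ref{LTdiag}. Once these algebraic cancellations are tabulated case by case, the conclusion reduces to a direct comparison with Corollary~\ref{cor:levelset}.
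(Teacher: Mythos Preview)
Your proposal is correct and follows essentially the same approach as the paper's own proof: apply Proposition~\ref{thm:rowcolop} with size $r-i$, use Lemma~\ref{LTdiag} to pass from $T^L_{r-i},F^L_{r-i}$ to $T_{r-i-1}(\wtilde),F_{r-i-1}(\wtilde)$ via the ratio $s_{\text{row}}(i)/s_{\text{row}}(i+1)$, invoke the induction hypothesis, and finish with Corollary~\ref{cor:levelset}. The only item you omit is the degenerate base case $i=r-1$ (where $T_{r-i}$ and $F_{r-i}$ are $1\times1$ and Lemma~\ref{lem:Rrm} gives $\det(F_1)/\det(T_1)=1/n_{\pi(r),r}$ directly), which the paper handles separately before running your argument.
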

\begin{proof}
First, consider the case $i=r-1$. Then the inequality $\pi(r)<i+1$ implies that $n_{\pi(r),r}\in V_w$. By Definition~\ref{defTi}, we have $T_1=(u_{\pi(r),r})$ and $F_1=(u_{\pi(r-1),r})$, hence by Lemma~\ref{lem:Rrm} we have $\det(F_{1})/\det(T_{1})=\frac{1}{n_{\pi(r),r}}$. Also, it follows from Lemma~\ref{levelpartlem} that a free variable $n_{a,b}\in V_w$ satisfies $\norm{n_{a,b}}=r$ if and only if $a=\pi(r)$ and $b=r$. This proves the proposition for the case $i+1=r$. \par
From now on, we assume that $i+1< r$, so that $T_{r-i}$ and $F_{r-i}$ have size at least $2$. Let $x_{i+s}$ be the row of $T_{r-i}$ that contains the highest nonzero entry of the rightmost column of $T_{r-i}$. Then we have $i+s=\max\bigl(i+1,\pi\i(r)\bigr)$, hence by Proposition~\ref{thm:rowcolop} with $i$ replaced by $(r-i)$ we have
\begin{equation}\label{FTdetratiocase2}
    \begin{aligned}
 \frac{\det\left(F_{r-i}(w)\right)}{\det\left(T_{r-i}(w)\right)}=\begin{cases}
\frac{1}{n_{\pi(i+1),r}}+\frac{1}{n_{\pi(i+1),r}}\cdot\frac{u_{\pi(i+2),r}}{u_{\pi(i+1),r}}\cdot\frac{\det\left(F^L_{r-i}(w)\right)}{\det\left(T^L_{r-i}(w)\right)} & \text{if} \quad  i+1>\pi\i(r)     \\
 - \frac{u_{\pi(i+2),r}}{u_{\pi(i+1),r}}\cdot\frac{\det\left(F^L_{r-i}(w)\right)}{\det\left(T^L_{r-i}(w)\right)} & \text{if} \quad  i+1=\pi\i(r) \\
 \frac{\det\left(F^L_{r-i}(w)\right)}{\det\left(T^L_{r-i}(w)\right)} & \text{if} \quad  i+1<\pi\i(r).
 \end{cases}        
    \end{aligned}
\end{equation}
We have $u_{\pi(i+2),r}=u_{\pi(i+1),r}n_{\pi(i+2),r}$ by Lemma~\ref{lem:Rrm}. Also, it is a direct consequence of Lemma~\ref{LTdiag} that we have
\begin{equation}\label{LFLTratio}
\frac{\det\left(F^L_{r-i}(w)\right)}{\det\left(T^L_{r-i}(w)\right)}=\frac{s_{\text{row}}(i)}{s_{\text{row}}(i+1)}\cdot\frac{\det \left(F_{r-i-1}(\widetilde{w})\right)}{\det \left(T_{r-i-1}(\widetilde{w})\right)},    
\end{equation}
where, by \eqref{rowscaleequiv}, we have
\begin{equation}\label{srowratio}
 \frac{s_{\text{row}}(i)}{s_{\text{row}}(i+1)}=\begin{cases}
\hfil \frac{n_{\pi(i+1),r}}{n_{\pi(i+2),r}} & \text{if } i+1>\pi\i(r)     \\
\hfil -\frac{1}{n_{\pi(i+2),r}} & \text{if } i+1=\pi\i(r) \\
\hfil  1 & \text{if } i+1<\pi\i(r).
 \end{cases}        
\end{equation}
Applying \eqref{LFLTratio} and \eqref{srowratio} to \eqref{FTdetratiocase2} gives
\begin{equation}\label{FTdetratiocase22}
    \begin{aligned}
 \frac{\det\left(F_{r-i}(w)\right)}{\det\left(T_{r-i}(w)\right)}=\begin{cases}
\frac{1}{n_{\pi(i+1),r}}+\frac{\det \left(F_{r-i-1}(\widetilde{w})\right)}{\det \left(T_{r-i-1}(\widetilde{w})\right)} & \text{if} \quad  i+1>\pi\i(r)     \\
\frac{\det \left(F_{r-i-1}(\widetilde{w})\right)}{\det \left(T_{r-i-1}(\widetilde{w})\right)} & \text{if} \quad  i+1\leq\pi\i(r).
 \end{cases}        
    \end{aligned}
\end{equation}
By the induction hypothesis, this implies
\begin{equation}\label{FTdetratiocase2-2}
    \begin{aligned}
 \frac{\det\left(F_{r-i}(w)\right)}{\det\left(T_{r-i}(w)\right)}=\begin{cases}
\frac{1}{n_{\pi(i+1),r}}+\sum\limits_{n_{a,b}\in B_{\wtilde}(i+1)} \frac{1}{n_{a,b}} & \text{if} \quad  i+1>\pi\i(r)     \\
\sum\limits_{n_{a,b}\in B_{\wtilde}(i+1)} \frac{1}{n_{a,b}} & \text{if} \quad  i+1\leq\pi\i(r).
 \end{cases}        
    \end{aligned}
\end{equation}
By Corollary~\ref{cor:levelset}, this finishes the proof of the proposition.
\end{proof}

\section{Part (\textnormal{i}) of Theorem~\ref{biratlthm}}\label{sec:part1}
Fix a Weyl group element $w\in \Omega$. In this section, we prove part (i) of Theorem~\ref{biratlthm} by showing that each $n_\a$ can be written as ratios of determinants of submatrices of $wu$. For each $n_{x,y}\in V_w$, we assign a submatrix $K(n_{x,y})$ of $wu$, which is the largest invertible matrix whose bottom left corner entry is $u_{x,y}$, and then compute its determinant. This can be viewed as an extension of Proposition~\ref{detTiprop}: for a free variable $n_{\pi(r),y}$ in the bottom row of $wn$, $T_{r-y+1}$ is the largest invertible submatrix of $wu$ whose bottom left corner entry is $u_{\pi(r),y}$.
\begin{definition}\label{def:Kcol}
    For each free variable $n_{x,y}\in V_w$, let
\begin{equation}\label{Kcoldef}
K_{\text{col}}(n_{x,y})=\{j\in\Z \mid y\leq j\leq r \text{ and } \pi\i(j)\leq  \pi\i(x)\}.
\end{equation}
\end{definition}
Since $n_{x,y}\in V_w$, we have $\pi\i(y)<\pi\i(x)$, hence $y\in K_{\text{col}}(n_{x,y})$. Thus $K_{\text{col}}$ is nonempty. Observe that for $y\leq j\leq r$, $j\in K_{\text{col}}(n_{x,y})$ if and only if $1_j$ is above the row which contains $n_{x,y}$. Such a $j$ satisfies $n_{x,j}\in V_w$.
\begin{definition}\label{Kndef}
For $n_{x,y}\in V_w$ such that $K_{\text{col}}(n_{x,y})=\{y_1,\ldots,y_t\}$ with $y=y_1<\cdots<y_t$, we let $K(n_{x,y})$ denote the $t\times t$ submatrix of $wu$ that consists of the $(\pi\i(x)-t+1)$-th through the $\pi\i(x)$-th rows of $wu$, and the $y_1$-th, $y_2$-th, $\ldots$ , and $y_t$-th columns of $wu$.
\end{definition}
Observe that the bottom left corner entry of $K(n_{x,y})$ is $(wu)_{\pi\i(x),y}=u_{x,y}$. 
\begin{definition}\label{kappandef}
We let $\kappa(n_{x,y})$ be the $t\times t$ submatrix of $w$, consisting of the rows and columns that contain $1_{y_1},\ldots 1_{y_t}$.
\end{definition}
\begin{definition}\label{def:Kdet}
For $n_{x,y}\in V_w$, we let
\begin{equation}\label{eq:Kdet}
    K_{\text{det}}(n_{x,y})=\{n_{a,b}\in V_w \mid 1\leq a<y\leq b\leq r, \quad \pi\i(a)\leq \pi\i (x)\},
\end{equation}    
\end{definition}
In case more than one Weyl group element is considered, we write $K^{(w)}_{\text{col}}(n_{x,y})$, $K_w(n_{x,y})$, $\kappa_w(n_{x,y})$, and $K^{(w)}_{\text{det}}(n_{x,y})$ to specify that they are associated to $w$. \par
We use induction to compute the determinant of $K(n_{x,y})$. For this, we use the map $\phihat$ defined in \eqref{phihat}. Consider again the example \eqref{ex:w3} and let $n_{1,2}\in V_{w}$.
 \[wn=\left(
\begin{smallarray}{ccccc}
 0 & 1_2 & 0 & 0 & 0 \\
\cline{2-2}\cline{4-5}
 0 & \multicolumn{1}{|c|}{0} & 0 & \multicolumn{1}{|c|}{0} & \multicolumn{1}{c|}{1_5} \\
 0 & \multicolumn{1}{|c|}{0} & 0 & \multicolumn{1}{|c|}{1_4} & \multicolumn{1}{c|}{n_{4,5}} \\
 1_1 & \multicolumn{1}{|c|}{n_{1,2}} & 0 & \multicolumn{1}{|c|}{n_{1,4}} & \multicolumn{1}{c|}{n_{1,5}} \\
 \cline{2-2}\cline{4-5}
 0 & 0 & 1_3 & n_{3,4} & n_{3,5} \\
\end{smallarray}
\right) \leadsto K_w(n_{1,2})=\left(\begin{smallarray}{ccc}
0 & 0 & 1_5 \\
0 & 1_4 & u_{4,5} \\
u_{1,2} & u_{1,4} & u_{1,5} \\
\end{smallarray}\right), \quad \kappa_w(n_{1,2})=\left(\begin{smallarray}{ccc}
    1_2 & 0 & 0 \\
    0 & 0 & 1_5 \\
    0 & 1_4 & 0 \\ 
\end{smallarray}\right).\]
Check that $K^{(w)}_{\text{col}}(n_{1,2})=\{2,4,5\}$ and $K^{(w)}_{\text{det}}(n_{1,2})=\{n_{1,2},n_{1,4},n_{1,5}\}$. On the other hand, from \eqref{ex:indiceslem}, we obtain
\[
    \widehat{w}\nhat=
\left(
\begin{smallarray}{cccc}
 0 & 1_2 & 0 & 0 \\
\cline{2-4}
 0 & \multicolumn{1}{|c}{0} & \multicolumn{1}{|c|}{0} & \multicolumn{1}{c|}{1_4} \\
 0 & \multicolumn{1}{|c}{0} & \multicolumn{1}{|c|}{1_3} & \multicolumn{1}{c|}{n_{3,4}} \\
 1_1 & \multicolumn{1}{|c}{n_{1,2}} & \multicolumn{1}{|c|}{n_{1,3}} & \multicolumn{1}{c|}{n_{1,4}} \\
 \cline{2-4}
\end{smallarray}
\right)
\leadsto
K_{\what}(n_{1,2})=\left(\begin{smallarray}{ccc}
0 & 0 & 1_4 \\
0 & 1_3 & u_{3,4} \\
u_{1,2} & u_{1,3} & u_{1,4} \\
\end{smallarray}\right), \quad \kappa_{\what}(n_{1,2})=\left(\begin{smallarray}{ccc}
    1_2 & 0 & 0 \\
    0 & 0 & 1_4 \\
    0 & 1_3 & 0 \\ 
\end{smallarray}\right).
\]
Also, we have
$K^{(\what)}_{\text{col}}(n_{1,2})=\{2,3,4\}$ and $K^{(\what)}_{\text{det}}(n_{1,2})=\{n_{1,2},n_{1,3},n_{1,4}\}$. It follows from Lemma~\ref{indiceslem} that $\phihat\bigl(K_w(n_{1,2})\bigr)=K_{\what}(n_{1,2})$.
\begin{lem}\label{lem:Kphihat}
   Let $n_{x,y}\in V_w$ be a free variable above the bottom row of $wn$. We have
   \[\phihat \left(K_w(n_{x,y})\right)= K_{\what}(n_{\phihat(x),\phihat(y)}), \quad \phihat (K^{(w)}_{\text{det}}(n_{x,y}))=K^{(\what)}_{\text{det}}(n_{\phihat(x),\phihat(y)}),\] and $\kappa_w(n_{x,y})=\kappa_{\what}(n_{\phihat(x),\phihat(y)})$.
\end{lem}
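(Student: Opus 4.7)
The plan is to reduce all three claims to the observation that each of $K_w(n_{x,y})$, $K^{(w)}_{\text{det}}(n_{x,y})$, and $\kappa_w(n_{x,y})$ is supported on rows and columns of $w$ (or $wu$, or $wn$) that survive the passage from $w$ to $\what$, and then to apply $\phihat$ entry-wise together with Lemma~\ref{indiceslem} to identify the result with the corresponding object for $\what$.

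The key step is to check that $\pi(r)\notin K^{(w)}_{\text{col}}(n_{x,y})$. If $\pi(r)$ were in this set, then by \eqref{Kcoldef} we would have $r=\pi\i(\pi(r))\leq \pi\i(x)$, forcing $x=\pi(r)$ and contradicting the hypothesis that $n_{x,y}$ lies above the bottom row of $wn$. The row indices of $K_w(n_{x,y})$ are $\pi\i(x)-t+1,\ldots,\pi\i(x)$, all strictly less than $r$, so they also survive the removal of the bottom row. Composing the identities in \eqref{pihat} yields $\pihat\i\circ\phihat=\pi\i$, and since $\phihat$ is monotonic on $\{1,\ldots,r\}\smallsetminus\{\pi(r)\}$, one checks directly that
\[\phihat(K^{(w)}_{\text{col}}(n_{x,y}))=K^{(\what)}_{\text{col}}(n_{\phihat(x),\phihat(y)})\]
as ordered subsets. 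Consequently, $K_w(n_{x,y})$ sits inside $\wuhat$ at exactly the same positions that $K_{\what}(n_{\phihat(x),\phihat(y)})$ occupies in $\what\uhat$, and Lemma~\ref{indiceslem} (which gives $\phihat(\wuhat)=\what\uhat$) identifies the entries via $\phihat$, proving the first equality.

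For the second equality, given $n_{a,b}\in K^{(w)}_{\text{det}}(n_{x,y})$, the inequality $\pi\i(a)\leq\pi\i(x)<r$ forces $a\neq\pi(r)$, while $(a,b)\in\Inv(\pi\i)$ gives $\pi\i(b)<\pi\i(a)<r$ so $b\neq\pi(r)$; hence $\phihat(n_{a,b})$ is defined, and monotonicity of $\phihat$ together with $\pihat\i\circ\phihat=\pi\i$ places $\phihat(n_{a,b})$ into $K^{(\what)}_{\text{det}}(n_{\phihat(x),\phihat(y)})$. The reverse inclusion uses the bijectivity of $\phihat$ and Lemma~\ref{indiceslem} to lift elements of $V_{\what}$ back to $V_w$. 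For $\kappa_w(n_{x,y})=\kappa_{\what}(n_{\phihat(x),\phihat(y)})$, the entry $1_{y_i}$ sits at position $(\pi\i(y_i),y_i)$ in $w$ while $1_{\phihat(y_i)}$ sits at $(\pihat\i(\phihat(y_i)),\phihat(y_i))=(\pi\i(y_i),\phihat(y_i))$ in $\what$, and monotonicity of $\phihat$ preserves the relative ordering of both the row indices $\{\pi\i(y_i)\}$ and the column indices, so the two permutation submatrices agree.

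The main obstacle is simply careful bookkeeping of the index conventions; the only substantive point is the observation that $\pi(r)\notin K^{(w)}_{\text{col}}(n_{x,y})$, which is what cleanly decouples the three objects from the row and column removed in passing from $w$ to $\what$.
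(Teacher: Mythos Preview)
Your proof is correct and follows precisely the approach the paper intends: the paper omits the details, stating that the lemma ``follows directly from a straightforward application of Lemma~\ref{indiceslem}'' and referring to the thesis for a full write-up, and what you have written is exactly that straightforward application carried out with care. The key observation you isolate, that $\pi(r)\notin K^{(w)}_{\text{col}}(n_{x,y})$ whenever $n_{x,y}$ is above the bottom row, is indeed the only substantive point, and the rest is the index bookkeeping the paper alludes to.
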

We omit the proof, as it follows directly from a straightforward application of Lemma~\ref{indiceslem}. See (7.45) through (7.52) of \cite{kimWhit} for a detailed proof.
\begin{lem}\label{detKnlem}
Let $n_{x,y}\in V_w$. We have
\[\det\bigl(K(n_{x,y})\bigr)=\det\bigl(\kappa(n_{x,y})\bigr)\cdot \prod_{n_{a,b}\in K_{\text{det}(n_{x,y})}}n_{a,b}.\]
\end{lem}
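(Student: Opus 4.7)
The plan is to prove the lemma by strong induction on $r-\pi\i(x)$, the number of rows of $wn$ lying strictly below the row containing $n_{x,y}$. The base case corresponds to $n_{x,y}$ sitting in the bottom row of $wn$, where the result reduces directly to the already-established Proposition~\ref{detTiprop}; the inductive step removes the bottom row via the map $\phihat$ from Section~\ref{sec:coirow}.

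For the base case $r-\pi\i(x)=0$, we have $x=\pi(r)$ and $n_{x,y}$ lies in the bottom row of $wn$. Since the condition $\pi\i(j)\le r$ is automatic, one gets $K_{\text{col}}(n_{\pi(r),y})=\{y,y+1,\ldots,r\}$, and so $K(n_{\pi(r),y})$ is precisely the lower-right submatrix $T_{r-y+1}(w)$ of $wu$ from Definition~\ref{defTi}. Likewise, $\kappa(n_{\pi(r),y})$ is the submatrix of $w$ formed from the rows and columns containing $1_y,\ldots,1_r$, which by Definition~\ref{def:Lw} is exactly $L_{y-1}(w)$. Finally, the constraint $\pi\i(a)\le r$ in $K^{(w)}_{\text{det}}(n_{\pi(r),y})$ is automatic, so its indexing set agrees with the one for $\Delta_{r-y+1}(w)$ in Definition~\ref{Deltaidef}. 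The desired identity then reads as the content of Proposition~\ref{detTiprop}.

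For the inductive step, suppose $\pi\i(x)<r$, so that $n_{x,y}$ lies above the bottom row. Every $n_{a,b}\in K^{(w)}_{\text{det}}(n_{x,y})$ satisfies $\pi\i(a)\le\pi\i(x)<r$, so no entry of $K(n_{x,y})$ or $\kappa(n_{x,y})$ involves the bottom row or the $\pi(r)$-th column of $wn$. Thus $\phihat$ can be applied entrywise, and Lemma~\ref{lem:Kphihat} gives $\phihat(K_w(n_{x,y}))=K_{\what}(n_{\phihat(x),\phihat(y)})$, $\phihat(K^{(w)}_{\text{det}}(n_{x,y}))=K^{(\what)}_{\text{det}}(n_{\phihat(x),\phihat(y)})$, and $\kappa_w(n_{x,y})=\kappa_{\what}(n_{\phihat(x),\phihat(y)})$. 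Because $\phihat$ acts as a bijective relabeling of the free variables and the determinant is polynomial in its entries, applying the inductive hypothesis to $\what$ at $n_{\phihat(x),\phihat(y)}$ and then pulling back via $\phihat\i$ yields the claimed identity for $w$ at $n_{x,y}$. The induction terminates since the number of rows below $n_{\phihat(x),\phihat(y)}$ in $\what\nhat$ is $(r-1)-\pi\i(x)$, one less than $r-\pi\i(x)$.

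The main obstacle is not any single calculation but rather ensuring that the reduction via $\phihat$ is legitimate at the level of $wu$ (not merely $wn$): namely, that each entry $u_\alpha=R^{(w)}(n_\alpha)$ appearing in $K(n_{x,y})$ involves only free variables above the bottom row, and that these rational expressions intertwine correctly with $\phihat$ in the sense $\phihat(R^{(w)}(n_\alpha))=R^{(\what)}(\phihat(n_\alpha))$. Both compatibilities are supplied by Lemma~\ref{indiceslem} and already packaged into Lemma~\ref{lem:Kphihat}, after which the argument reduces to routine bookkeeping.
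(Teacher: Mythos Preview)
Your proof is correct and follows essentially the same strategy as the paper: the bottom-row case is identified with Proposition~\ref{detTiprop} via $K(n_{\pi(r),y})=T_{r-y+1}$, $\kappa(n_{\pi(r),y})=L_{y-1}(w)$, and $K_{\text{det}}(n_{\pi(r),y})\leftrightarrow\Delta_{r-y+1}(w)$, while the above-bottom-row case is pushed down to $\what$ via Lemma~\ref{lem:Kphihat} and Lemma~\ref{indiceslem}. The only cosmetic difference is that the paper organizes the induction on the size $r$ of $w$ (with base case $r=2$), whereas you induct on $r-\pi\i(x)$; since $\pihat\i(\phihat(x))=\pi\i(x)$, both parameters drop by exactly one under $\phihat$, so the two schemes are equivalent.
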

\begin{proof} 
First, consider a free variable $n_{\pi(r),y}\in V_w$ in the bottom row of $wn$. We have $K_{\text{col}}(n_{\pi(r),y})=\{y,y+1,\ldots, r\}$, $K(n_{\pi(r),y})=T_{r-y+1}$ by Definition~\ref{defTi}, $\kappa(n_{\pi(r),y})=L_{y-1}(w)$ by Definition~\ref{def:Lw}, and
\[\prod_{n_{a,b}\in K_{\text{det}(n_{\pi(r),y})}}n_{a,b}=\Delta_{r-y+1}(w)\]
by Definition~\ref{Deltaidef}. The lemma follows from Proposition~\ref{detTiprop}. For $n_{x,y}\in V_w$ above the bottom row of $wn$, we use induction on the size $r$ of $w$. The base case of $r=2$ follows directly from Lemma~\ref{lem:basecase}. Assume that the lemma holds for every Weyl group element $w'$ of $\GL(r')$ with $2\leq r'\leq r-1$. By the induction hypothesis and Lemma~\ref{lem:Kphihat}, we have
\[\begin{aligned}
\phihat\left(\det\bigl(K_w(n_{x,y})\bigr)\right)&=\det\left(K_{\what}(n_{\phihat(x),\phihat(y)})\right)=\det\bigl(\kappa_{\what}(n_{\phihat(x),\phihat(y)})\bigr)\cdot \prod\limits_{n_{\ahat,\bhat}\in K^{(\what)}_{\text{det}}(n_{\phihat(x),\phihat(y)})} n_{\ahat,\bhat} \\
&=
\det\bigl(\kappa_w(n_{x,y})\bigr) \cdot\prod_{n_{a,b}\in K_{\text{det}(n_{x,y})}}\phihat(n_{a,b}).
\end{aligned}\]
Applying $\phihat\i$ to both sides finishes the proof.
\end{proof}
Part (i) of Theorem~\ref{biratlthm} follows once we show that every free variable $n_{x,y}$ can be written as the ratio of products of the determinants of $K(n_{\alpha})$. This shows that the inverse of the rational map $R$ exists as a rational function.
\begin{lem}\label{Knsetlem}
Let $n_{x,y}\in V_w$. We have the following:
\begin{enumerate}
    \item [(i)] If there is no free variable above $n_{x,y}$ in the $y$-th column, then every element in the set $K_{\text{det}}(n_{x,y})$ other than $n_{x,y}$ is to the right of the $y$-th column of $wn$. 
    \item [(ii)] If there is a free variable in the $y$-th column above $n_{x,y}$, then let $n_{x_1,y}$ be the free variable that is located in the lowest row among all such variables. To simplify notations, write $K_{\text{det}}(n_{x,y})=K_{\text{det}}$ and $K_{\text{det}}(n_{x_1,y})=K^{(1)}_{\text{det}}$. We have
    $K^{(1)}_{\text{det}}\subseteq K_{\text{det}}$ and $n_{x,y}\in (K_{\text{det}}\smallsetminus K^{(1)}_{\text{det}})$. Also, every element of $(K_{\text{det}}\smallsetminus K^{(1)}_{\text{det}})$ other than $n_{x,y}$ is in the $\pi\i(x)$-th row, to the right of the $n_{x,y}$.
\end{enumerate}
\end{lem}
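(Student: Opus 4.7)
The proof is a direct unraveling of the defining condition
\[K_{\text{det}}(n_{x,y})=\{n_{a,b}\in V_w\mid 1\leq a<y\leq b\leq r,\ \pi\i(a)\leq \pi\i(x)\}\]
combined with the placement rule $(wn)_{i,j}=n_{\pi(i),j}$, under which a free variable $n_{a,b}\in V_w$ sits at row $\pi\i(a)$ and column $b$ of $wn$. The plan in both parts is to assume a ``bad'' element of $K_{\text{det}}(n_{x,y})$ exists and then produce a free variable in column $y$ strictly between $n_{x_1,y}$ and $n_{x,y}$, contradicting a hypothesis.

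For part (i), I would suppose $n_{a,b}\in K_{\text{det}}(n_{x,y})\smallsetminus\{n_{x,y}\}$ and assume toward contradiction that $b=y$. Then $a<y$, $(a,y)\in\Inv(\pi\i)$, and $\pi\i(a)\leq\pi\i(x)$. Since $n_{a,y}\neq n_{x,y}$ forces $a\neq x$, this inequality is strict: $\pi\i(a)<\pi\i(x)$. Hence $n_{a,y}$ occupies column $y$ of $wn$ strictly above $n_{x,y}$, contradicting the hypothesis that no such free variable exists. Therefore $b>y$.

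For part (ii), the inclusion $K_{\text{det}}(n_{x_1,y})\subseteq K_{\text{det}}(n_{x,y})$ is immediate from $\pi\i(x_1)<\pi\i(x)$, and $n_{x,y}$ lies in the difference because $\pi\i(x)\not\leq\pi\i(x_1)$. To describe the remaining elements, I would take any $n_{a,b}\in K_{\text{det}}(n_{x,y})\smallsetminus K_{\text{det}}(n_{x_1,y})$ with $n_{a,b}\neq n_{x,y}$. The definitions give
\[\pi\i(x_1)<\pi\i(a)\leq \pi\i(x)\quad\text{and}\quad 1\leq a<y\leq b\leq r.\]
The key claim is $a=x$. If $a\neq x$, then $\pi\i(x_1)<\pi\i(a)<\pi\i(x)$; combined with $\pi\i(x_1)>\pi\i(y)$ (which holds because $n_{x_1,y}\in V_w$), this yields $a<y$ and $\pi\i(a)>\pi\i(y)$, so $n_{a,y}$ is itself a free variable sitting in column $y$ strictly between $n_{x_1,y}$ and $n_{x,y}$, violating the choice of $n_{x_1,y}$ as the lowest such variable above $n_{x,y}$. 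Hence $a=x$, and then $n_{a,b}\neq n_{x,y}$ forces $b>y$, placing $n_{a,b}$ in row $\pi\i(x)$ strictly to the right of $n_{x,y}$.

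The only substantive content is the minimality-of-$x_1$ argument in part (ii); everything else is bookkeeping with row and column coordinates, and the main risk is simply keeping the direction of the inequalities on $\pi\i$ straight when translating ``above'' and ``to the right'' into conditions on the indices.
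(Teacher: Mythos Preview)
Your proof is correct and follows essentially the same approach as the paper's own proof. The paper likewise dismisses part (i) as straightforward and, for part (ii), writes out the difference $K_{\text{det}}\smallsetminus K^{(1)}_{\text{det}}$ explicitly and argues that any $n_{a,b}$ in it yields a free variable $n_{a,y}$ in column $y$ between $n_{x_1,y}$ and $n_{x,y}$, forcing $n_{a,y}=n_{x,y}$ (equivalently $a=x$) by minimality of $n_{x_1,y}$.
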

\begin{proof} 
Part (i) is straightforward. For part (ii), observe that by the choice of $n_{x_1,y}$ there is no free variable between $n_{x,y}$ and $n_{x_1,y}$. We have $\pi\i(x_1)<\pi\i(x)$, and this implies $ K^{(1)}_{\text{det}}\subseteq K_{\text{det}}$. By Definition~\ref{def:Kdet} we have
\begin{equation}\label{Kdet1minusK1det1}
    K_{\text{det}}\smallsetminus K^{(1)}_{\text{det}}=\bigl\{n_{a,b}\in V_w \mid 1\leq a<y\leq b\leq r, \quad \pi\i (x_1)<\pi\i(a)\leq \pi\i (x)\bigr\}.
\end{equation}
It is clear that $n_{x,y}\in ( K_{\text{det}}\smallsetminus K^{(1)}_{\text{det}})$. Suppose that $n_{a,b}\in (K_{\text{det}}\smallsetminus K^{(1)}_{\text{det}})$ and $n_{a,b}\neq n_{x,y}$. We have $\pi\i(y)<\pi\i(x_1)<\pi\i(a)$ and $a<y$, thus $n_{a,y}$ is a free variable in the $y$-th column, below $n_{x_1,y}$, and in or above the $\pi\i(x)$-th row. Since there is no free variable between $n_{x,y}$ and $n_{x_1,y}$, we must have that $n_{a,y}=n_{x,y}$. Thus $n_{a,b}$ is in the $\pi\i(x)$-th row. Since $n_{a,b}\neq n_{x,y}$, the inequality $y\leq b$ implies that $n_{a,b}$ is to the right of $n_{x,y}$. This proves (ii).
\end{proof}
\begin{proof}[Proof of part (i) of Theorem~\ref{biratlthm}]
The proof amounts to showing that every free variable $n_{x,y}$ can be written as a rational function of the coordinates $u_{i,j}$. We use descending induction on the column index $y$ of the free variables $n_{x,y}\in V_w$. For the base case $y=r$, we have $n_{\pi(x),r}=\frac{u_{\pi(x),r}}{u_{\pi(x-1),r}}$ for $n_{\pi(x),r}\in V_w$ by Lemma~\ref{lem:Rrm}, so the assertion holds. Now, let $1\leq y<r$ and assume that every free variable to the right of the $y$-th column can be written as a rational function of $u_{i,j}$. Consider $n_{x,y}\in V_w$. Write $K_{\text{det}}=K_{\text{det}}(n_{x,y})$. By Lemma~\ref{detKnlem}, we have
\[
n_{x,y}=\det\bigl(\kappa(n_{x,y})\bigr)\cdot\det\bigl(K(n_{x,y})\bigr)\cdot \prod\limits_{n_{a,b}\in (K_{\text{det}} \smallsetminus \{n_{x,y}\})} \frac{1}{n_{a,b}}.
\]
If there is no free variable in the $y$-th column of $wn$ above $n_{x,y}$, then by part (i) of Lemma~\ref{Knsetlem}, every free variable that appears in the product is to the right of the $y$-th column. By the induction hypothesis, we conclude that $n_{x,y}$ is a rational function of $u_{i,j}$. If there is a free variable in the $y$-th column of $wn$ above $n_{x,y}$, let $n_{x_1,y}$ be the free variable that is located in the lowest row among all such variables. Write $ K^{(1)}_{\text{det}}= K_{\text{det}}(n_{x_1,y})$. By part (ii) of Lemma~\ref{Knsetlem}, we have $K^{(1)}_{\text{det}}\subseteq K_{\text{det}}$ and $n_{x,y}\in (K_{\text{det}}\smallsetminus K^{(1)}_{\text{det}})$. By Lemma~\ref{detKnlem}, we have
\[
n_{x,y}=\frac{\det\bigl(\kappa(n_{x_1,y})\bigr)}{\det\bigl(\kappa(n_{x,y})\bigr)}\cdot \frac{\det\bigl(K(n_{x,y})\bigr)}{\det\bigl(K(n_{x_1,y})\bigr)} \cdot \prod\limits_{\substack {n_{a,b}\in (K_{\text{det}}\smallsetminus K^{(1)}_{\text{det}}) \\ n_{a,b}\neq n_{x,y}}} \frac{1}{n_{a,b}}.
\]
By part (ii) of Lemma~\ref{Knsetlem}, every free variable that appears in the product is to the right of $n_{x,y}$. By the induction hypothesis, we conclude that $n_{x,y}$ is a rational function of $u_{i,j}$. This finishes the proof.
\end{proof}

\section{Part (\textnormal{iv}) of Theorem~\ref{biratlthm}}\label{sec:part4}
In this section, we prove part (iv) of Theorem~\ref{biratlthm}. Fix a Weyl group element $w\in \Omega$.
\begin{lem}\label{lem:part4partial}
Let $n_{x,y}\in V_w$. We have \begin{equation}\label{eq:part4step1}
    \frac{\partial u_{x,y}}{\partial n_{x,y}}=(-1)^{\abs{D(n_{x,y})}-1}\frac{\prod\limits_{\substack {d> y \\ (x,d)\in \Inv(\pi\i)}}n_{x,d}\prod \limits_{\substack{\pi\i(y)< l< \pi\i(x) \\ (\pi(l),y)\in \Inv(\pi\i)}}n_{\pi(l),y}}{\prod\limits_{(y,f)\in \Inv(\pi\i)}n_{y,f}}.
\end{equation} 
\end{lem}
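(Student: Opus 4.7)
The plan is to differentiate $u_{x,y}=R(n_{x,y})$ directly from the formula
\[
R(n_{x,y})\ =\ (-1)^{|D(n_{x,y})|-1}\,\frac{P(n_{x,y})}{\prod_{1_\mu\in D(n_{x,y})}\rho(1_\mu)}
\]
of Definition~\ref{def:Rformula}. First I would show that $n_{x,y}$ does not appear in the denominator: each $\rho(1_\mu)$ is a product of nonzero entries in the row containing $1_\mu$, and $n_{x,y}$ lies in row $\pi\i(x)$; the only $1_\mu$ sitting in that row is $1_x$, which occupies column $x<y$ and therefore does not belong to $M_w(n_{x,y})$, so $1_x\notin D(n_{x,y})$. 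Consequently $\partial u_{x,y}/\partial n_{x,y}$ equals $(-1)^{|D(n_{x,y})|-1}(\partial P/\partial n_{x,y})/\prod_\mu\rho(1_\mu)$.

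Next I would compute $\partial P/\partial n_{x,y}$ combinatorially. Since $P(n_{x,y})=\sum_{\mathbf p\in\mathcal P(n_{x,y})}u(\mathbf p)$ is a sum of squarefree monomials in the free variables, the derivative picks out exactly those $\mathbf p$ whose path system traverses $n_{x,y}$, each contributing $u(\mathbf p)/n_{x,y}$. The key geometric point is that $n_{x,y}$ sits at the bottom-left corner of $M_w(n_{x,y})$: because paths move only up and left and must terminate at a destination inside $M_w$, any path through $n_{x,y}$ can enter only from the right (along row $\pi\i(x)$) and exit only upward (along column $y$). A leftward turn out of column $y$ above $n_{x,y}$ would trap the path in columns $<y$ and prevent it from reaching any destination, so the upward portion continues straight up to $1_y$; similarly the horizontal portion must start at the bottom origin $o_b$ of $M_w(n_{x,y})$ and sweep leftward through every intervening nonzero entry of row $\pi\i(x)$, by the definition of ``neighbor.'' Hence the unique path through $n_{x,y}$ is canonical, and its monomial divided by $n_{x,y}$ is exactly the numerator displayed in \eqref{eq:part4step1}.

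The remaining paths in such a $\mathbf p$ must match the other origins (the rightmost nonzero entry of each row $\pi\i(y_i)$ for $1_{y_i}\in D(n_{x,y})\setminus\{1_y\}$) with the other destinations $\{1_{y_i}\}$, disjointly from the canonical path. I would argue by a permutation-cycle argument that the only feasible matching is the identity, with each row-$\pi\i(y_i)$ origin routed within its own row to $1_{y_i}$: upward/leftward motion forces $\pi\i(y_{\sigma(i)})\le\pi\i(y_i)$, and any nontrivial cycle in $\sigma$ would force all row indices in the cycle to coincide, contradicting their distinctness. The within-row path from this origin to $1_{y_i}$ sweeps through every nonzero entry of row $\pi\i(y_i)$—all of which lie in columns $\ge y_i>y$ and thus inside $M_w$—so it contributes exactly $\rho(1_{y_i})$. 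Multiplying over $i$ yields $\prod_{1_\mu\in D(n_{x,y}),\,\mu\neq y}\rho(1_\mu)$.

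Assembling the pieces, $\partial P/\partial n_{x,y}$ equals the numerator of \eqref{eq:part4step1} times $\prod_{\mu\neq y}\rho(1_\mu)$. Dividing by $\prod_\mu\rho(1_\mu)$ leaves only $\rho(1_y)=\prod_{(y,f)\in\Inv(\pi\i)}n_{y,f}$ in the denominator, which is precisely the denominator in \eqref{eq:part4step1}, and the sign $(-1)^{|D(n_{x,y})|-1}$ is inherited from Definition~\ref{def:Rformula}. The main technical obstacle is the pair of uniqueness statements—the rigidity of the canonical path through $n_{x,y}$ and the forced identity matching of the remaining origins with the remaining destinations—both of which rest on the same principle that up/leftward motion, together with the requirement of terminating inside $M_w(n_{x,y})$, sharply constrains the admissible routings.
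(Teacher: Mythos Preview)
Your proof is correct and follows essentially the same approach as the paper: differentiate $R(n_{x,y})$ by isolating the unique path system in $\mathcal P(n_{x,y})$ that traverses $n_{x,y}$, identify that system as the L-shaped path from the bottom origin to $1_y$ together with the horizontal paths $\gamma(1_{y_i})\to 1_{y_i}$ in the remaining rows, and read off the monomial. Your permutation-cycle argument for why the residual matching must be the identity is a slightly more explicit version of the paper's terse observation that ``all other paths in $\mathbf p$ must be horizontal,'' but the substance is the same.
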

\begin{proof}
First, we claim that there exists a unique set of disjoint paths $\mathbf{p}$ in ${\mathcal P}(n_{x,y})$ that contains $n_{x,y}$. Recall from Definition~\ref{mwna} that $n_{x,y}$ is in the bottom left corner entry of $M_w(n_{x,y})$. Thus for a path $p$ in the set $\mathbf{p}\in {\mathcal P}(n_{x,y})$ to pass through $n_{x,y}$, $p$ must start from the bottom origin $\gamma(1_x)$ and end at the top destination $1_y$, forming an L-shaped path. Also, for every element of $O(n_{x,y})$ other than $\gamma(1_x)$, there is a unique element of $D(n_{x,y})$ other than $1_y$ located in the same row. Consequently, all other paths in $\mathbf{p}$ must be horizontal. This proves our claim. \par
Since $n_{x,y}$ does not appear in the denominator in \eqref{defR} with $n_\alpha=n_{x,y}$, it follows from our claim that
\[\frac{\partial u_{x,y}}{\partial n_{x,y}}=(-1)^{\abs{D(n_{x,y})}-1}\frac{1}{n_{x,y}}\frac{u(\mathbf{p})}{\prod\limits_{1_\mu\in D(n_{x,y})}\rho(1_\mu)}.\]
The L-shaped path in $\mathbf p$ passes through the variables in the bottom row and the leftmost column of $M_w(n_{x,y})$. The other paths are horizontal, passing through every free variable containing $1_\mu\in D(n_{x,y})$ other than the top destination $1_y$. Therefore
\[
u(\mathbf{p})=n_{x,y} \cdot \prod\limits_{\substack {d> y \\ n_{x,d}\in V_w}} n_{x,d}\cdot\prod \limits_{\substack{\pi\i(y)< l< \pi\i(x) \\ n_{\pi(l),y}\in V_w}}n_{\pi(l),y} \cdot \prod\limits_{1_\mu\in D(n_{x,y})\smallsetminus \{1_y\}} \rho(1_\mu).\]
The first and second products represent the horizontal and vertical segments of the $L$-shaped path, respectively. The product in the denominator of the right hand side of \eqref{eq:part4step1} equals $\rho(1_y)$. This finishes the proof.
\end{proof}
For the following proposition, from which part (iv) of Theorem~\ref{biratlthm} follows as a corollary, we introduce an ordering $\sqsupset$ to the indices $(i,j)\in \Inv(\pi\i)$ by the lexicographic order on the pair $(-j,\pi\i(i))$. That is, \begin{equation}\label{ordering2}
(i_1,j_1)\sqsupset (i_2,j_2) \quad \text{if} \quad
j_1<j_2,\quad \text{or if} \quad j_1=j_2 \quad \text{and}\quad \pi\i(i_1)>\pi\i(i_2).\end{equation} We extend the ordering in the obvious way to the variables $n_{i,j}$, that is, 
$n_{i_1,j_1}\sqsupset n_{i_2,j_2}$ if $(i_1,j_1)\sqsupset (i_2,j_2)$. Note that this ordering is different from the ordering $\succ$ defined in \eqref{ordering}. 
\begin{prop}\label{lem:Jacobian}
Write $\Inv(\pi\i)=\{\alpha_1,\alpha_2,\ldots ,\alpha_d\}$, where $\alpha_1\sqsupset \cdots\sqsupset \alpha_d$. The Jacobian matrix of $R$ under the ordering $\sqsupset$, with $(i,j)$-entry given by $\f{\partial u_{\alpha_i}}{\partial n_{\alpha_j}}$, is upper triangular, and its determinant is \[(-1)^{t_w} \prod_{(i,j)\in \Inv(\pi\i)} n_{i,j}^{j-i-1},\] where $t_w=\sum_{(x,y)\in \Inv(\pi\i)} \abs{D(n_{x,y})}-d$.
\end{prop}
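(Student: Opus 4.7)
The plan is to establish that the Jacobian is upper triangular under $\sqsupset$, then read off the diagonal entries from Lemma~\ref{lem:part4partial}, and finally track the exponent of each free variable in the product of those entries by a short combinatorial count.

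For upper-triangularity, I will show that $u_\alpha = R(n_\alpha)$ depends only on the free variables lying in the submatrix $M_w(n_\alpha)$ of Definition~\ref{mwna}. Indeed, by Definitions~\ref{def:calPn} and~\ref{def:Rformula}, $R(n_\alpha)$ is the ratio of $P(n_\alpha)$ by $\prod_{1_\mu\in D(n_\alpha)}\rho(1_\mu)$. Each $\rho(1_\mu)$ is a product of free variables from a row of $M_w(n_\alpha)$, and every path in $\mathcal{P}(n_\alpha)$ connects an origin in $M_w(n_\alpha)$ to a destination in $M_w(n_\alpha)$ by moving strictly up and left; a path that crosses the top or left boundary of $M_w(n_\alpha)$ cannot return to a destination inside, so all its intermediate entries also lie in $M_w(n_\alpha)$. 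Writing $\alpha=(a,b)$ and any such free variable $\beta=(a',b')$, the inclusion $\beta\in M_w(n_\alpha)$ translates to $b'\geq b$ and $\pi\i(b)\leq \pi\i(a')\leq \pi\i(a)$, which by~\eqref{ordering2} is equivalent to $\alpha\sqsupseteq \beta$. Hence $\partial u_{\alpha_i}/\partial n_{\alpha_j}=0$ whenever $j<i$, so the Jacobian is upper triangular.

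The determinant is then the product of the diagonal entries supplied by Lemma~\ref{lem:part4partial}, and the signs collect into $\prod_{(x,y)\in \Inv(\pi\i)}(-1)^{\abs{D(n_{x,y})}-1}=(-1)^{t_w}$ by the definition of $t_w$. For the product of the monomial factors, I fix $n_{i,j}\in V_w$ and track its net exponent. A direct inspection of the formula in Lemma~\ref{lem:part4partial} shows that $n_{i,j}$ appears once in the first numerator of the $(x,y)$-factor precisely when $(x,y)=(i,y)$ with $i<y<j$ and $\pi\i(y)<\pi\i(i)$; once in the second numerator precisely when $(x,y)=(x,j)$ with $x<j$ and $\pi\i(x)>\pi\i(i)$ (the constraint $(x,j)\in \Inv(\pi\i)$ being automatic from $\pi\i(i)>\pi\i(j)$); and once in the denominator precisely when $(x,y)=(x,i)$ with $x<i$ and $\pi\i(x)>\pi\i(i)$. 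Splitting the second-numerator count at $x<i$ versus $i<x<j$ (the value $x=i$ is excluded since $\pi\i(i)\not>\pi\i(i)$), the subcount with $x<i$ cancels the denominator count exactly, leaving the number of $x$ with $i<x<j$ and $\pi\i(x)>\pi\i(i)$. Adding the first-numerator count of $y$ with $i<y<j$ and $\pi\i(y)<\pi\i(i)$, every index $k$ with $i<k<j$ is counted exactly once under the dichotomy $\pi\i(k)>\pi\i(i)$ versus $\pi\i(k)<\pi\i(i)$ (equality being impossible for $k\neq i$), yielding the net exponent $j-i-1$ and the claimed determinant.

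The main step requiring care is the combinatorial bookkeeping in the exponent count, where three distinct types of appearances of $n_{i,j}$ must be reorganized into a single partition statement on the indices strictly between $i$ and $j$; by comparison, the upper-triangular claim is a direct consequence of the geometric interpretation of $M_w(n_\alpha)$ as the ``region of support'' of $R(n_\alpha)$, with no algebraic cancellation needed.
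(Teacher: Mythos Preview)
Your proof is correct and follows the same outline as the paper's: establish upper-triangularity from the fact that $R(n_\alpha)$ only involves variables in $M_w(n_\alpha)$, then multiply the diagonal entries from Lemma~\ref{lem:part4partial} and count the exponent of each $n_{i,j}$. One small slip: the claim that ``$\beta\in M_w(n_\alpha)$'' is \emph{equivalent} to $\alpha\sqsupseteq\beta$ is false (for example $\alpha=(1,3)$, $\beta=(2,4)$ when $\pi^{-1}=(3,4,1,2)$ has $\alpha\sqsupset\beta$ but $n_\beta\notin M_w(n_\alpha)$); only the forward implication holds, and that is all you use. Your exponent bookkeeping is actually a bit cleaner than the paper's, which routes the same count through the auxiliary function $h(a,b)$ of Definition~\ref{def:hnlevel}, whereas your partition of $\{k:i<k<j\}$ by the sign of $\pi^{-1}(k)-\pi^{-1}(i)$ gets to $j-i-1$ directly.
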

\begin{proof}
Observe that $n_{i_1,j_1}\sqsupset n_{i_2,j_2}$ if $n_{i_1,j_1}$ is either to the left of $n_{i_2,j_2}$, or in the same column but below $n_{i_2,j_2}$. Thus if $n_{i_1,j_1}\sqsupset n_{i_2,j_2}$ then $n_{i_1,j_1}$ is not in the submatrix $M_w(n_{i_2,j_2})$, hence $n_{i_1,j_1}$ does not appear in the rational function $u_{i_2,j_2}=R(n_{i_2,j_2})$. Therefore the Jacobian matrix of $R$ under this ordering $\sqsupset$ is upper triangular, and its determinant equals the product $\prod_{(x,y)\in \Inv(\pi\i)} \frac{\partial u_{x,y}}{\partial n_{x,y}}$. \par
Let $n_{i,j}\in V_w$. Recall Definition~\ref{def:hnlevel}. First, we count the number of free variables $n_{x,y}$ such that $n_{i,j}$ appears in the denominator of the formula \eqref{eq:part4step1}, and call such number $A(i,j)$. This happens whenever the top destination of $n_{x,y}$ is $1_i$, that is, whenever $y=i$. Hence $A(i,j)$ equals the number of free variables in the $i$-th column. Since $1_{i}$ is in the $\pi\i(i)$-th row, there are $\bigl(r-\pi\i(i)\bigr)$ entries below $1_i$. Observe that an entry $n_{a,i}$ below $1_i$ is zero precisely when $1_a$ is below and to the right of $1_i$, and is a free variable otherwise. It follows that $A(i,j)=r-\pi\i(i)-h\bigl(\pi\i(i),i\bigr)$.
\par
Next, we count the number of free variables $n_{x,y}$ such that $n_{i,j}$ appears in the numerator of the formula \eqref{eq:part4step1}, and call such number $B(i,j)$. Recall that the first and the second products in the numerator represent the horizontal and the vertical segments of the L-shaped path that passes through $n_{x,y}$, respectively. Let $B_1(i,j)$ and $B_2(i,j)$ denote the number of $n_{x,y}\in V_w$ such that $n_{i,j}$ appears in the first and the second product in the numerator, respectively, so that $B(i,j)=B_1(i,j)+B_2(i,j)$. By a computation similar to that of $A(i,j)$, we have
\[B_1(i,j)=\bigl(r-i-h(\pi\i(i),i)\bigr)-\bigl(r-j-h(\pi\i(i),j)\bigr),\]
and $B_2(i,j)=r-\pi\i(i)-h(\pi\i(i),j)$. We conclude that the power of $n_{i,j}\in V_w$ in the product $\prod_{(x,y)\in \Inv(\pi\i)} \frac{\partial u_{x,y}}{\partial n_{x,y}}$ equals $B(i,j)-A(i,j)=j-i-1$. The product of the sign factors in \eqref{eq:part4step1} over all $n_{x,y}$ equals $(-1)^{t_w}$. The lemma follows.
\end{proof}
Part (iv) of Theorem~\ref{biratlthm} follows immediately from Proposition~\ref{lem:Jacobian}, by taking the absolute value of the determinant of the Jacobian.

\section{Part (\textnormal{v}) of Theorem~\ref{biratlthm}}\label{sec:part5}
In this section, we prove part (v) of Theorem~\ref{biratlthm}. Fix a Weyl group element $w\in \Omega$ of $\GL(r)$ with the corresponding permutation $\pi$. We use results from \cite{kim2024pt1}.
\begin{definition}\label{def:Vwabj}
For integers $a,b,j$ such that $1\leq j\leq r$ and $1\leq a<b\leq r$, we let
\begin{equation}\label{eq:Vwabj}
    V_w(a,b,j)=\bigl\{n_{x,y}\in V_w \mid a\leq \pi\i(y)<\pi\i(x)\leq b, \ j\leq y \bigr\}.
\end{equation}
That is, $n_{x,y}\in V_w(a,b,j)$ if $n_{x,y}\in V_w$ is in or above the $b$-th row, below the $a$-th row, in or to the right of the $j$-th column, and its top destination $1_y$ is in or below the $a$-th row.
\end{definition}
For the next definition, recall Definition~\ref{def:gamma1} of $\gamma(1_{\mu})$.
\begin{definition}\label{def:DabjOabj}
For any integers $a$, $b$, and $j$ satisfying 
\begin{equation}\label{eq:abjineq}
   1\leq a<b\leq r \eqand \pi(b)< j\leq \pi(a),
\end{equation} we let $D_w(a,b,j)$ denote the set of entries of $1$ which are in or to the right of the $j$-th column, above the $b$-th row and in or below the $a$-th row of $wn$, that is,
\begin{equation}\label{eq:Dabj}
    D_w(a,b,j)=\{1_{\mu}\in wn \mid a\leq \pi\i(\mu)<b, \ j\leq \mu \}.
\end{equation}
Also, we let 
\begin{equation}\label{eq:Oabj}
O_w(a,b,j)=\{\gamma(1_{\pi(b)})\}\cup \gamma \left(D_w(a,b,j)\right)\smallsetminus\{\gamma(1_{\pi(a)})\}.\end{equation}
\end{definition}
In Section 5 of \cite{kim2024pt1}, we introduce the notion of a ``$P_L$-decomposable set $D$ with respect to $d$" and ``the set $O$ of origins for a $P_L$-decomposable set $D$", and derive a decomposition formula \cite[Lemma 5.15]{kim2024pt1} for the polynomial $P_L(O\to D)$. We say that a set $D$ of entries of $1$ in $wn$ is $P_L$-decomposable with respect to $d$ if every element of $D$ is above and to the right of $1_{\pi(d)}$, and define the corresponding set of origins by $O=\{\gamma(1_{\pi(d)})\}\cup \gamma(D) \smallsetminus \{\gamma(1_\lambda)\}$, where $1_\lambda$ is the element of $D$ which is located in the highest row among all elements of $D$. Observe that the inequalities \eqref{eq:abjineq} imply that $D_w(a,b,j)$ is a $P_L$-decomposable set with respect to $b$. Also, it follows from the above definition that $1_{\pi(a)}\in D_w(a,b,j)$ and that every other element of $D_w(a,b,j)$ is below the $a$-th row, thus $O_w(a,b,j)$ is the corresponding set of origins. Thus we can use the decomposition formula.
\begin{lem}\label{lem:nxyabjform}
Let $n_{x,y}\in V_w$. Then the inequalities \eqref{eq:abjineq} are satisfied with $a$, $b$, and $j$ replaced by $\pi\i(y)$, $\pi\i(x)$, and $y$, respectively, and we have
\[D(n_{x,y})=D_w(\pi\i(y),  \pi\i(x), y) \eqand O(n_{x,y})=O_w(\pi\i(y), \pi\i(x), y). \]
    Also, if $k$ satisfies $1_{\pi(k)}\in D(n_{x,y})$ and $\pi(k)\neq y$ then the inequalities \eqref{eq:abjineq} are satisfied with $a$, $b$, and $j$ replaced by $k$, $\pi\i(x)$, and $(y+1)$, respectively, and we have
    \[
D_{\downarrow,k}(n_{x,y}) =D_w(k,\pi\i(x),y+1), \eqand
O_{\downarrow,k+1}(n_{x,y}) =O_w(k,\pi\i(x),y+1). \]
\end{lem}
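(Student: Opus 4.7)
The plan is to unfold all the definitions carefully and match up the index conditions, which ultimately reduces to bookkeeping.

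For the first claim, note that $n_{x,y} \in V_w$ means $(x,y) \in \Inv(\pi^{-1})$, so $x < y$ and $\pi^{-1}(y) < \pi^{-1}(x)$. Setting $a = \pi^{-1}(y)$ and $b = \pi^{-1}(x)$ immediately gives $a < b$ and $\pi(b) = x < y = \pi(a)$, so $\pi(b) < j \leq \pi(a)$ for $j = y$, verifying \eqref{eq:abjineq}. To match $D(n_{x,y})$ with $D_w(\pi^{-1}(y), \pi^{-1}(x), y)$, I would unfold Definition~\ref{Ddef}: an entry $1_\mu$ lies in $M_w(n_{x,y})$ if and only if $\pi^{-1}(y) \leq \pi^{-1}(\mu) \leq \pi^{-1}(x)$ and $y \leq \mu \leq r$. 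The only way $\pi^{-1}(\mu) = \pi^{-1}(x)$ can occur is $\mu = x$, but then $\mu = x < y$ violates $\mu \geq y$, so the upper bound can be replaced by the strict inequality $\pi^{-1}(\mu) < \pi^{-1}(x)$, matching \eqref{eq:Dabj} exactly. For $O(n_{x,y}) = O_w(\pi^{-1}(y), \pi^{-1}(x), y)$, Definition~\ref{Odef} identifies the bottom origin with the rightmost nonzero entry of row $\pi^{-1}(x)$, which is $\gamma(1_x) = \gamma(1_{\pi(b)})$; the remaining origins correspond bijectively to destinations $1_\mu \in D(n_{x,y}) \setminus \{1_y\}$ via $1_\mu \mapsto \gamma(1_\mu)$. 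Since $\gamma$ is injective on the set of 1's (each lies in a distinct row), this collection equals $\gamma(D(n_{x,y})) \setminus \{\gamma(1_y)\}$, which is \eqref{eq:Oabj} with the chosen $a$ and $b$.

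For the second half, when $1_{\pi(k)} \in D(n_{x,y})$ and $\pi(k) \neq y$, the containment gives $\pi^{-1}(y) \leq k < \pi^{-1}(x)$ and $\pi(k) \geq y$, and the hypothesis $\pi(k) \neq y$ upgrades these to $k > \pi^{-1}(y)$ and $\pi(k) > y$; combined with $x < y$ this yields $x \leq y < y+1 \leq \pi(k)$, verifying \eqref{eq:abjineq} with $a = k$, $b = \pi^{-1}(x)$, $j = y+1$. Then $D_{\downarrow,k}(n_{x,y})$ is by Definition~\ref{ODup} the subset of $D(n_{x,y})$ with $\pi^{-1}(\mu) \geq k$, and since $\mu = y$ would force $\pi^{-1}(\mu) = \pi^{-1}(y) < k$, this subset also satisfies $\mu \geq y+1$, matching the index conditions defining $D_w(k, \pi^{-1}(x), y+1)$. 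To identify $O_{\downarrow,k+1}(n_{x,y})$ with $O_w(k, \pi^{-1}(x), y+1)$, one combines the previous equality of destinations with the observation that $1_{\pi(k)}$ is the unique destination in row $k$, so that removing it from $D_w(k, \pi^{-1}(x), y+1)$ before taking $\gamma$ corresponds exactly to restricting origins to rows $\geq k+1$; the bottom origin $\gamma(1_{\pi(b)}) = \gamma(1_x)$ is preserved in both descriptions because $\pi^{-1}(x) \geq k+1$.

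There is no genuine obstacle in this proof; the whole argument is a careful tracking of index conditions. The only subtleties are to consistently handle the bottom row of $M_w(n_{x,y})$, which contributes the bottom origin $\gamma(1_{\pi(b)})$ but no destination in $D(n_{x,y})$, and to note that the destination $1_{\pi(k)}$ lies in exactly row $k$, so the passage from $D_{\downarrow,k}$ to $O_{\downarrow,k+1}$ precisely corresponds to removing $\gamma(1_{\pi(k)})$ in the definition of $O_w(k, \pi^{-1}(x), y+1)$.
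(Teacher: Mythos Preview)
Your proof is correct and is exactly the routine definition-unfolding the paper has in mind; indeed, the paper omits the proof entirely, stating that it follows immediately from Definitions~\ref{Ddef}, \ref{Odef}, and~\ref{ODup}. Your careful handling of the two subtleties (excluding the bottom row from $D(n_{x,y})$ and the row-$k$ shift from $D_{\downarrow,k}$ to $O_{\downarrow,k+1}$) is precisely what makes the verification work.
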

We omit the proof, as it follows immediately from Definitions~\ref{Ddef}, \ref{Odef}, and~\ref{ODup}. For the following lemma, recall Definition~\ref{D1}.
\begin{lem}\label{lem:R1ndecomp}
Let $n_{\pi(d),j}\in V_w$, and write
$O_1(\npdj)=\{n_{\pi(\d_1),r},\ldots,n_{\pi(\d_s),r}\}$. We have
\[R_1(n_{\pi(d),j})=\sum_{m=1}^{s} (-1)^{t_m} \cdot n_{\pi(\delta_m),r} \cdot u_{\pi(\d_m-1),j}\cdot \frac{P_L\bigl(O_w(\d_m,d,j+1)\to D_w(\d_m,d,j+1)\bigr)}{\prod\limits_{1_\mu\in D_w(\d_m,d,j+1)}\rho(1_\mu)},\]
where $t_{m}=\abs{D_w(\d_m,d,j+1)}$.
\end{lem}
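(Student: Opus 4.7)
The plan is to partition $\mathcal{P}_1(n_{\pi(d),j})$ into $s$ disjoint classes indexed by $m=1,\dots,s$, and to identify each class's contribution to $P_1(n_{\pi(d),j})$ with a summand in the claimed expression. Since $R_1(n_\alpha)=(-1)^{|D(n_\alpha)|-1}P_1(n_\alpha)/\prod_{1_\mu\in D(n_\alpha)}\rho(1_\mu)$, this immediately gives the formula for $R_1$.

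First I would show that in any $\mathbf{p}\in\mathcal{P}_1(n_{\pi(d),j})$, the up-going paths among right-column origins must come from $O_1$: an origin $n_{\pi(t),r}\notin O_1$ is blocked above either by another origin or by $1_r$, so its path cannot begin with an up-step. For each $\mathbf{p}\in\mathcal{P}_1$, let $m(\mathbf{p})$ be the largest index such that the path from $n_{\pi(\delta_m),r}$ starts by going up, and set $\mathcal{P}_1^{(m)}=\{\mathbf{p}:m(\mathbf{p})=m\}$. For $\mathbf{p}\in\mathcal{P}_1^{(m)}$, I would verify the following geometric decomposition: the path from $n_{\pi(\delta_m),r}$ takes a single up-step to $n_{\pi(\delta_m-1),r}$ and then behaves as a path in the upper-left submatrix $M_w(n_{\pi(\delta_m-1),j})$ whose bottom origin $\gamma(1_{\pi(\delta_m-1)})$ equals $n_{\pi(\delta_m-1),r}$; paths from $O_1$-origins with index $m''>m$ start by going left, and together with the bottom origin's left-going path are confined to the lower-right region (rows $\delta_m$ through $d$, columns $j+1$ through $r$); and all endpoints match so that this lower-right data forms a $\mathcal{P}_L$-configuration for $\bigl(O_w(\delta_m,d,j+1)\to D_w(\delta_m,d,j+1)\bigr)$.

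With the decomposition in hand, $\sum_{\mathbf{p}\in\mathcal{P}_1^{(m)}}u(\mathbf{p})$ factors as $n_{\pi(\delta_m),r}\cdot P(n_{\pi(\delta_m-1),j})\cdot P_L\bigl(O_w(\delta_m,d,j+1)\to D_w(\delta_m,d,j+1)\bigr)$, where the three factors correspond respectively to the single up-step, the path system on the upper submatrix, and the lower-right path system. Lemma~\ref{lem:nxyabjform} yields the disjoint decomposition $D(n_{\pi(d),j})=D(n_{\pi(\delta_m-1),j})\sqcup D_w(\delta_m,d,j+1)$, so $\prod_{1_\mu\in D(n_{\pi(d),j})}\rho(1_\mu)$ splits multiplicatively; substituting $u_{\pi(\delta_m-1),j}=(-1)^{|D(n_{\pi(\delta_m-1),j})|-1}P(n_{\pi(\delta_m-1),j})/\prod_{1_\mu\in D(n_{\pi(\delta_m-1),j})}\rho(1_\mu)$ produces the $m$-th summand, and the identity $|D(n_{\pi(d),j})|-|D(n_{\pi(\delta_m-1),j})|=t_m$ accounts for the sign $(-1)^{t_m}$.

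The main obstacle is establishing the geometric factorization in the second paragraph, in particular showing that the bottom origin's path in $\mathbf{p}\in\mathcal{P}_1^{(m)}$ never crosses above row $\delta_m$ (so its destination lies in $D_w(\delta_m,d,j+1)$), and that no path other than the specified up-step from $n_{\pi(\delta_m),r}$ transports free variables between the upper and lower regions. Ruling out configurations where a path from the lower region terminates at an upper destination, or vice versa, requires a careful case analysis; I expect the decomposition formula \cite[Lemma 5.15]{kim2024pt1} for $P_L$-decomposable sets, together with the fact that $D_w(\delta_m,d,j+1)$ is itself $P_L$-decomposable by the inequalities~\eqref{eq:abjineq}, to provide the needed organizational tool.
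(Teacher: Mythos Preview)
Your plan is sound and the partition of $\mathcal{P}_1(n_{\pi(d),j})$ by the lowest $O_1$-origin whose path begins with an up-step is exactly the right decomposition, but the paper does not carry out this combinatorial argument here: its entire proof consists of citing \cite[Lemma~5.10]{kim2024pt1}, which gives the same formula with $O_{\downarrow,\delta_m+1}(n_{\pi(d),j})$ and $D_{\downarrow,\delta_m}(n_{\pi(d),j})$ in place of $O_w(\delta_m,d,j+1)$ and $D_w(\delta_m,d,j+1)$, and then invoking Lemma~\ref{lem:nxyabjform} to identify these sets. So what you are sketching is essentially a self-contained proof of \cite[Lemma~5.10]{kim2024pt1} itself, rather than the short deduction the paper records.

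Regarding the obstacle you flag: it is milder than you suggest, and \cite[Lemma~5.15]{kim2024pt1} is not the right tool. The separation of upper and lower path systems follows from a direct counting argument. Paths from upper origins (rows $\le\delta_m-1$) and the path from $n_{\pi(\delta_m),r}$ after its first up-step are confined to rows $\le\delta_m-1$ by the up/left constraint, so they terminate at upper destinations. Since $\lvert O_{\uparrow,\delta_m-1}(n_{\pi(d),j})\rvert+1=\lvert D_{\uparrow,\delta_m-1}(n_{\pi(d),j})\rvert$, these paths account for all upper destinations, forcing every lower origin's path to terminate at a lower destination and hence to lie entirely in rows $\ge\delta_m$. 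Disjointness from the single entry $n_{\pi(\delta_m),r}$ in row $\delta_m$ is then immediate, since lower column-$r$ origins go left first. This gives the factorization without any appeal to the $P_L$-decomposition formula.
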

\begin{proof}
It is proved in \cite[Lemma 5.10]{kim2024pt1} that the above equation with $O_w(\d_m,d,j+1)$ and $D_w(\d_m,d,j+1)$ replaced by $O_{\downarrow,\delta_m+1}(n_{\pi(d),j})$ and $D_{\downarrow,\delta_m}(n_{\pi(d),j})$, respectively, holds. It follows from Definitions~\ref{O1} and~\ref{D1} that each $1\leq i\leq s$ satisfies $1_{\pi(\d_i)}\in D(\npdj)$ and $\pi(\d_i)\neq j$. The lemma follows from Lemma~\ref{lem:nxyabjform}.
\end{proof}

\begin{lem}\label{lem:RLODdecompnew}
    Let $2\leq j\leq r$. For any integers $a$ and $b$ that satisfy the inequalities given in \eqref{eq:abjineq}, the rational function
\begin{equation}\label{eq:PLabj}
\frac{P_L\bigl(O_w(a,b,j)\to D_w(a,b,j)\bigr)}{\prod\limits_{1_\mu\in D_w(a,b,j)}\rho(1_{\mu})}\end{equation} 
has the form
  \begin{equation} \label{RLnform}
  \begin{gathered}
    \sum\limits_{{\mathcal V}\subseteq V_w\left(a,b,j\right)} c({\mathcal V})f_{\mathcal V}, \quad
\text{where} \quad f_{\mathcal V}\in \Z\bigl[\{u_{x,y} \mid n_{x,y}\in {\mathcal V}\}\bigr] \quad \text{and} \\ \quad c({\mathcal V})\in \Z\bigl[\{n_{\pi(x),r}\in V_w\mid a<x\leq b\}\bigr].
\end{gathered}
\end{equation}
\end{lem}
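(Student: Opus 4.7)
The plan is to prove the lemma by induction on the cardinality of $D_w(a,b,j)$. Because the inequalities \eqref{eq:abjineq} ensure $D_w(a,b,j)$ is $P_L$-decomposable with respect to $b$ and $O_w(a,b,j)$ is its associated set of origins, the decomposition formula \cite[Lemma 5.15]{kim2024pt1} applies directly to the numerator of \eqref{eq:PLabj}. The aim is to show that each piece produced by that decomposition matches the target form \eqref{RLnform}, after which the induction hypothesis closes the argument.

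For the base case $|D_w(a,b,j)|=1$, we have $D_w(a,b,j)=\{1_{\pi(a)}\}$ and $O_w(a,b,j)=\{\gamma(1_{\pi(b)})\}$, so \eqref{eq:PLabj} reduces to $P_L(\{\gamma(1_{\pi(b)})\}\to\{1_{\pi(a)}\})/\rho(1_{\pi(a)})$, a sum of monomials over $L$-shaped paths in $wn$ from $\gamma(1_{\pi(b)})$ to $1_{\pi(a)}$, where by the definition of ${\mathcal P}_L$ the path starts by going left when the origin sits in the rightmost column. Solving for the $n$-variables on the path in terms of $u$-variables via the formula in Section~\ref{sec:Rformula} (essentially an application of Lemma~\ref{lem:Rrm} to the rightmost-column entries together with expressing the interior free variables as a product involving the corresponding $u_{x,y}$) produces an expression of the required form, with the factors $n_{\pi(x),r}$ arising only for $a<x\leq b$ and the $u$-factors indexed by free variables in $V_w(a,b,j)$.

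For the inductive step, apply \cite[Lemma 5.15]{kim2024pt1} to express \eqref{eq:PLabj} as a sum of terms of the shape $\pm\,n_{\pi(\delta),r}\cdot u_{\pi(\delta-1),j}\cdot P_L(O'\to D')/\prod_{1_\mu\in D'}\rho(1_\mu)$ (as in Lemma~\ref{lem:R1ndecomp}), where $\delta$ runs over an appropriate index set, and $D'$, $O'$ are strictly smaller $P_L$-decomposable sets. Lemma~\ref{lem:nxyabjform} lets us re-identify each pair $(O',D')$ as $(O_w(a',b',j'),D_w(a',b',j'))$ for some triple $(a',b',j')$ with $a\leq a'<b'\leq b$ and $j'\geq j$, so that $V_w(a',b',j')\subseteq V_w(a,b,j)$. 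Applying the induction hypothesis to each inner fraction expresses it in the form \eqref{RLnform}, and multiplying by the prefactor $n_{\pi(\delta),r}\cdot u_{\pi(\delta-1),j}$ keeps us inside the claimed form because $a<\delta\leq b$ and the free variable $n_{\pi(\delta-1),j}$ (when it is a free variable at all) lies in $V_w(a,b,j)$.

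The principal obstacle is the bookkeeping in the inductive step: one must check that every $\delta$ appearing in the decomposition satisfies $a<\delta\leq b$ so that the coefficient $n_{\pi(\delta),r}$ is an allowed $c({\mathcal V})$-variable, and that the auxiliary $u$-factor $u_{\pi(\delta-1),j}$ (together with the $u$-factors produced by the induction hypothesis) indexes only free variables in $V_w(a,b,j)$. Both checks reduce to carefully reading the ranges in Definitions~\ref{def:Vwabj} and~\ref{def:DabjOabj} and using Lemma~\ref{lem:nxyabjform} to translate between the $D_{\downarrow,k}$/$O_{\downarrow,k}$ notation used in \cite{kim2024pt1} and the $D_w/O_w$ notation used here.
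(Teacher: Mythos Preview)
Your proposal has a genuine gap: you have misidentified what the cited decomposition lemma produces. Lemma~5.15 of \cite{kim2024pt1} does \emph{not} yield terms of the shape $\pm\, n_{\pi(\delta),r}\cdot u_{\pi(\delta-1),j}\cdot(\text{smaller }P_L\text{ fraction})$; that is the form of Lemma~\ref{lem:R1ndecomp}, which decomposes $R_1$, not a general $P_L$ fraction. What Lemma~5.15 actually gives is the identity
\[P_L(O\to D)=P_L(O_\downarrow\to D_\downarrow)\cdot P_L(O_\uparrow\to D_\uparrow)-\rho(1_{l(1)})\cdot P_L\bigl(O\smallsetminus\{n_{l(1),r}\}\to D\smallsetminus\{1_{l(1)}\}\bigr),\]
with no $u$-variable appearing. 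The $u$-variables enter only through Lemma~4.1 of \cite{kim2024pt1}, which rewrites $R_L(n_{\pi(b),l})$ as $u_{\pi(b),l}-n_{\pi(b),r}\,u_{\pi(b-1),l}-R_1(n_{\pi(b),l})$. This identity is the essential bridge from $n$-polynomials to $u$-polynomials, and your proposal never invokes it.

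The same omission makes your base case unworkable. When $|D|=1$ the fraction \eqref{eq:PLabj} is exactly $\pm R_L(n_{\pi(b),\pi(a)})$, and ``solving for the $n$-variables on the path in terms of $u$-variables'' is not a valid step: there is no elementary inversion that lands in the ring $\Z\bigl[\{u_{x,y}\},\{n_{\pi(x),r}\}\bigr]$. The paper instead runs descending induction on $j$: it first proves that $R_L(n_{\pi(b),l(1)})$ has the form \eqref{RLnform} via Lemma~4.1 together with Lemma~\ref{lem:R1ndecomp} and the induction hypothesis (the $R_1$ decomposition produces $P_L$ fractions with column index $j'\geq j+1$), and only then applies Lemma~5.15 to reduce the general fraction to that $R_L$ piece and two further fractions, again with strictly larger column index $l(2)>j$. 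Inducting on $j$ rather than on $|D|$ is what makes both decompositions simultaneously available.
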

\begin{proof}
We prove the lemma by descending induction on $j$. First, let $j=r$. For $a$ and $b$ to satisfy \eqref{eq:abjineq}, we must have $a=\pi\i(r)$, $\pi\i(r)<b$, and $\pi(b)<r$. It follows that $O_w(a,b,j)=\{n_{\pi(b),r}\}$ and $D_w(a,b,j)=\{1_r\}$. Since the path from $n_{\pi(b),r}$ to $1_r$ is vertical, it follows from Definition~\ref{calpleft} that the rational function \eqref{eq:PLabj} equals $0$. The lemma follows. \par
Now, let $2\leq j\leq r-1$, and assume that the assertion is true for all $j'$, $j<j'\leq r$. Let $a$ and $b$ be positive integers satisfying the inequalities \eqref{eq:abjineq}.
Recall from the remark below Definition~\ref{def:DabjOabj} that $1_{\pi(a)}$ is the highest element of $D_w(a,b,j)$. Write
\[
    D_w(a,b,j)=\{1_{l(1)},1_{l(2)},\ldots,1_{l(m)}\},\quad l(1)<l(2)<\cdots<l(m).
\]
Let $1\leq h\leq m$ satisfy $l(h)=\pi(a)$. Since $\pi\i(l(1))<b$ and $\pi(b)<j\leq l(1)$, we have $n_{\pi(b),l(1)}\in V_w$. First, we claim that $R_L(n_{\pi(b),l(1)})$ is of the form \eqref{RLnform}. By Lemma 4.1 of \cite{kim2024pt1}, $R_L(n_{\pi(b).l})$ equals the sum
\[u_{\pi(b),l(1)}-n_{\pi(b),r}u_{\pi(b-1),l(1)}-R_1(n_{\pi(b),l(1)}).\]
Check that the first and second terms are of the form \eqref{RLnform}. Observe that any linear combination of products of functions of the form \eqref{RLnform} is again of the form \eqref{RLnform}. Therefore, it follows immediately from the induction hypothesis and Lemma~\ref{lem:R1ndecomp} that $R_1(n_{\pi(b),l(1)})$ is also of the form \eqref{RLnform}. This proves our claim. \par
We now proceed to analyze the rational function \eqref{eq:PLabj}. If $h=1$, then $1_{\pi(a)}=1_{l(1)}$ is the leftmost element of $D_w(a,b,j)$. In this case, it follows from \eqref{eq:Dabj} and \eqref{eq:Oabj} that $D_w(a,b,j)=D_w(a,b,\pi(a))$ and $O_w(a,b,j)=O_w(a,b,\pi(a))$. By Lemma~\ref{lem:nxyabjform}, we deduce that \eqref{eq:PLabj} equals $R_L(n_{\pi(b),\pi(a)})$ up to a sign, which we already know has the form \eqref{RLnform}. Henceforth, assume that $h>1$. By Lemma 5.15 of \cite{kim2024pt1}, we have
\begin{equation}\label{PLODabj1}
\begin{aligned}
    P_L(O_w(a,b,j)&\to D_w(a,b,j))=P_L(O_{a,b,j,\downarrow}\to D_{a,b,j,\downarrow})\cdot P_L(O_{a,b,j,\uparrow}\to D_{a,b,j,\uparrow}) \\
    & -\rho(1_{l(1)})\cdot P_L(O_w(a,b,j)\smallsetminus \{n_{l(1),r}\} \to D_w(a,b,j)\smallsetminus \{1_{l(1)}\}),
\end{aligned}
\end{equation}
where
\begin{align*}
    O_{a,b,j,\downarrow}&=\bigl\{\gamma(1_{l(k)})\in O_w(a,b,j) \mid \pi\i\bigl(l(k)\bigr)\geq \pi\i\bigl(l(1)\bigr)+1\bigr\}, \\
    D_{a,b,j,\downarrow}&=\bigl\{1_{l(k)}\in D_w(a,b,j) \mid \pi\i\bigl(l(k)\bigr)\geq \pi\i\bigl(l(1)\bigr)\bigr\},  \\
    O_{a,b,j,\uparrow}&=\bigl\{\gamma(1_{l(k)})\in O_w(a,b,j) \mid \pi\i\bigl(l(k)\bigr)\leq \pi\i\bigl(l(1)\bigr)\bigr\},  \\
    D_{a,b,j,\uparrow}&=\bigl\{1_{l(k)}\in D_w(a,b,j) \mid \pi\i\bigl(l(k)\bigr)\leq \pi\i\bigl(l(1)\bigr)-1\bigr\}. 
\end{align*}
Since $ D_{a,b,j,\downarrow}$ and  $D_{a,b,j,\uparrow}$ partition $D_w(a,b,j)$, we have
\[
\prod\limits_{1_\mu\in D_w(a,b,j)} \rho(1_\mu)=\prod\limits_{1_\mu\in D_{a,b,j,\downarrow}} \rho(1_\mu)\cdot \prod\limits_{1_\mu\in D_{a,b,j,\uparrow}} \rho(1_\mu)=\rho\left(1_{l(1)}\right)\cdot \prod\limits_{1_\mu\in D_w(a,b,j)\smallsetminus \{1_{l(1)}\}} \rho(1_\mu).\]
Thus, dividing both sides of \eqref{PLODabj1} by $\prod\limits_{1_\mu\in D_w(a,b,j)} \rho(1_\mu)$, we obtain
\begin{equation}\label{PLODabj2}
\begin{aligned}
    \frac{P_L\left(O_w(a,b,j)\to D_w(a,b,j)\right)}{\prod\limits_{1_\mu\in D_w(a,b,j)} \rho(1_\mu)}&=\frac{P_L(O_{a,b,j,\downarrow}\to D_{a,b,j,\downarrow})}{\prod\limits_{1_\mu\in D_{a,b,j,\downarrow}} \rho(1_\mu)}\cdot \frac{P_L(O_{a,b,j,\uparrow}\to D_{a,b,j,\uparrow})}{\prod\limits_{1_\mu\in D_{a,b,j,\uparrow}} \rho(1_\mu)} \\
    &\quad -\frac{P_L(O_w(a,b,j)\smallsetminus \{n_{l(1),r}\} \to D_w(a,b,j)\smallsetminus \{1_{l(1)}\})}{\prod\limits_{1_\mu\in D_w(a,b,j)\smallsetminus \{1_{l(1)}\}} \rho(1_\mu)}.
\end{aligned}
\end{equation}
Since every element of $D_{a,b,j,\downarrow}$ is in or below the $\pi\i(l(1))$-th row and in or to the right of the $l(1)$-th column, it follows from Definition~\ref{def:DabjOabj} and Lemma~\ref{lem:nxyabjform} that
\[
D_{a,b,j,\downarrow}=D_w\bigl(b,\pi\i(l(1)),l(1)\bigr)=D\bigl(n_{\pi(b),l(1)}\bigr).\]
From this, it immediately follows that
$O_{a,b,j,\downarrow}=O\bigl(n_{\pi(b),l(1)}\bigr)$. Hence the first fractional term on the right hand side of \eqref{PLODabj2} equals $R_L(n_{\pi(b),l(1)})$ up to a sign, which has the form \eqref{RLnform}. Similarly, it is straightforward to verify that \[D_{a,b,j,\uparrow}=D_w\bigl(a,\pi\i(l(1)),l(2)\bigr),\quad  O_{a,b,j,\uparrow}=O_w\bigl(a,\pi\i(l(1)),l(2)\bigr),\]
\[D_w(a,b,j)\smallsetminus \{1_{l(1)}\}=D_w\left(a,b,l(2)\right), \quad \text{and}\quad O_w(a,b,j)\smallsetminus \{n_{l(1),r}\}=O_w\bigl(a,b,l(2)\bigr).\]
Since $j\leq l(1)<l(2)$, it follows from the induction hypothesis that the second and the third fractional terms on the right hand side of \eqref{PLODabj2} are of the form \eqref{RLnform}. This finishes the proof.
\end{proof}
\begin{cor}\label{lem:RLndecomp}
If $n_{\pi(d),j}\in V_w$ then $R_L(n_{\pi(d),j})$ can be written in the form
\begin{equation} \label{RLnform1}
\begin{gathered}
    \sum\limits_{{\mathcal V}\subseteq V_w(\pi\i(j),d,j)} c({\mathcal V})f_{\mathcal V}, \quad \text{where} \quad  f_{\mathcal V}\in \Z\bigl[\bigl\{u_{x,y} \mid n_{x,y}\in {\mathcal V}\bigr\}\bigr] \quad \text{and} \\ 
    c({\mathcal V})\in \Z\bigl[\bigl\{n_{\pi(k),r}\in V_w\mid \pi\i(j)<k\leq d\bigr\}\bigr].
\end{gathered}
\end{equation}
\end{cor}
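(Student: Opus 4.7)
The plan is to derive this corollary as a direct specialization of Lemma~\ref{lem:RLODdecompnew} at the triple $(a,b,j)=(\pi\i(j),d,j)$, where the third coordinate is chosen to coincide with the column index of $n_{\pi(d),j}$. First I would verify that this triple satisfies the inequalities \eqref{eq:abjineq}. Since $n_{\pi(d),j}\in V_w$, by \eqref{invpiinv} the pair $(\pi(d),j)$ lies in $\Inv(\pi\i)$, so $\pi(d)<j$ and $\pi\i(\pi(d))=d>\pi\i(j)$. Combined with $1\leq \pi\i(j)\leq r$ and the identity $j=\pi(\pi\i(j))$, this gives $1\leq \pi\i(j)<d\leq r$ and $\pi(d)<j\leq \pi(\pi\i(j))$, as required.

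Next I would invoke Lemma~\ref{lem:nxyabjform} with $(x,y)=(\pi(d),j)$ to identify
\[D(n_{\pi(d),j})=D_w(\pi\i(j),d,j) \eqand O(n_{\pi(d),j})=O_w(\pi\i(j),d,j).\]
By Definition~\ref{def:RLR1R2}, $R_L(n_{\pi(d),j})$ equals $(-1)^{t_\a}$ times the rational function \eqref{eq:PLabj} at this triple. Applying Lemma~\ref{lem:RLODdecompnew} then yields an expansion as a sum over subsets ${\mathcal V}\subseteq V_w(\pi\i(j),d,j)$ with $f_{\mathcal V}\in\Z[\{u_{x,y}\mid n_{x,y}\in {\mathcal V}\}]$ and coefficients $c({\mathcal V})\in\Z[\{n_{\pi(k),r}\in V_w\mid \pi\i(j)<k\leq d\}]$, which matches the form \eqref{RLnform1} after renaming the dummy index $x$ to $k$.

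Finally, the overall sign $(-1)^{t_\a}$ in the formula for $R_L$ can be absorbed into the integer coefficients $c({\mathcal V})$, since they are permitted to range over $\Z$. No substantive obstacle arises: this corollary is essentially a relabelling of Lemma~\ref{lem:RLODdecompnew} specialized to the parameters dictated by the free variable $n_{\pi(d),j}$, with Lemma~\ref{lem:nxyabjform} providing the translation between the $(a,b,j)$-parameterization and the notation tied directly to a single free variable.
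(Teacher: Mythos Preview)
Your proposal is correct and follows exactly the approach indicated in the paper: specialize Lemma~\ref{lem:RLODdecompnew} at $(a,b,j)=(\pi\i(j),d,j)$ and use Lemma~\ref{lem:nxyabjform} to identify $D_w(\pi\i(j),d,j)$ and $O_w(\pi\i(j),d,j)$ with $D(n_{\pi(d),j})$ and $O(n_{\pi(d),j})$, after which Definition~\ref{def:RLR1R2} gives the claim up to the sign $(-1)^{t_\a}$ absorbed into the integer coefficients. Your write-up is in fact more detailed than the paper's, which simply states that the corollary follows immediately from these two lemmas.
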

\begin{proof}
This follows immediately from Lemma~\ref{lem:nxyabjform} and the previous lemma.
\end{proof}
\subsection{Proof of part (v)}
We now move on to analyzing the rational functions $n_\a\f{\partial }{\partial n_\a} u_\b$. Recall from Lemma~\ref{lem:basecase} that $u_{1,2}=n_{1,2}$, so part (v) is trivial for $\GL(2)$. We use induction on $r$, assuming that part (v) holds for every Weyl group element $w'$ of $\GL(r')$ with $2\leq r'<r$. The proof proceeds by a case-by-case analysis.
\begin{lem}\label{lem:p6abovebottom}
Suppose that $n_\b\in V_w$ is above the bottom row of $wn$. For any $n_\a\in V_w$, $n_\a\f{\partial }{\partial n_\a} u_\b$ is of the form \eqref{ordform},
\[\sum\limits_{i} c_i(\a)f_i, \quad \text{where}\quad c_i(\a)\in \Z\bigl[\{n_\d,n_\d\i\mid \d\prec\a\}\bigr]\quad\text{and}\quad f_i\in \Z\bigl[\{u_\g \mid n_\g \in V_w \} \bigr].\]
\end{lem}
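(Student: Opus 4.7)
The strategy is to induct on the matrix size $r$, using the change of indices $\phihat$ from Section~\ref{sec:coirow} to reduce the question to the smaller Weyl element $\what \in \GL(r-1)$. The base case $r = 2$ is immediate from Lemma~\ref{lem:basecase}. Assuming part~(v) holds for every Weyl element of $\GL(r')$ with $2 \leq r' < r$, I fix $n_\b \in V_w$ above the bottom row and split the argument according to whether $n_\a$ lies in the bottom row or not.

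When $n_\a$ lies in the bottom row (so $\a = (\pi(r), y)$ for some $y$), the plan is to observe that $u_\b = R^{(w)}(n_\b)$ involves no bottom-row free variable. Indeed, the submatrix $M_w(n_\b)$ terminates at row $\pi\i(b) < r$, and the $\rho(1_\mu)$ factors in the denominator of \eqref{defR} only record entries in rows that contain some $1_\mu \in D(n_\b)$, all of which sit above the bottom row. Hence $\partial u_\b/\partial n_\a = 0$, and the required form holds trivially as the empty sum.

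When $n_\a$ is also above the bottom row, I apply Lemma~\ref{indiceslem} to obtain $R^{(w)}(n_\b) = \phihat\i\bigl(R^{(\what)}(\phihat(n_\b))\bigr)$, viewing $\phihat$ as a pure renaming of free variables. Setting $\widehat{\a} = \phihat(\a)$ and $\widehat{\b} = \phihat(\b)$, the chain rule yields
\[
n_\a \frac{\partial u_\b}{\partial n_\a} = \phihat\i\!\Bigl(n_{\widehat{\a}} \,\frac{\partial u_{\widehat{\b}}}{\partial n_{\widehat{\a}}}\Bigr),
\]
and the induction hypothesis on $\what$ writes the bracketed expression in the required form, with coefficients in $\Z[\{n_{\widehat{\d}}^{\pm 1} \mid \widehat{\d} \prec_{\what} \widehat{\a}\}]$ and polynomial factors in $\Z[\{u_{\widehat{\g}} \mid n_{\widehat{\g}} \in V_{\what}\}]$. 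Pulling back by $\phihat\i$ delivers the analogous decomposition for $n_\a \partial u_\b/\partial n_\a$, whose polynomial factors lie in the smaller set $\Z[\{u_\g \mid n_\g \in V_w \text{ above the bottom row}\}] \subseteq \Z[\{u_\g \mid n_\g \in V_w\}]$.

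The last point to check, which should be routine bookkeeping, is that the ordering $\prec$ of \eqref{ordering} is preserved by $\phihat$. Since $\phihat$ is strictly increasing on $\{1,\ldots,r\}\smallsetminus\{\pi(r)\}$ and \eqref{pihat} gives $\pihat\i \circ \phihat = \pi\i$, the defining lexicographic comparison on pairs $(-j, -\pi\i(i))$ matches the corresponding comparison in $\what$. Consequently $\widehat{\d} \prec_{\what} \widehat{\a}$ is equivalent to $\d \prec \a$, so the pulled-back coefficients $c_i(\a) = \phihat\i(\hat{c}_i(\widehat{\a}))$ land in the correct ring. I do not anticipate a serious obstacle here: this lemma is the reduction half of part~(v). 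The real work in part~(v) lies in its companion case where $n_\b$ sits in the bottom row of $wn$, in which the $\phihat$-reduction is unavailable and one must argue directly from the explicit structure of $R^{(w)}$ using Lemmas~\ref{RLwRwtilde} and~\ref{lem:R1ndecomp} together with Corollary~\ref{lem:RLndecomp}.
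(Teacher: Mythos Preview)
Your proposal is correct and follows essentially the same approach as the paper: split on whether $n_\a$ lies in the bottom row (derivative zero) or not (reduce to $\what$ via $\phihat$ and invoke the induction hypothesis), then verify that $\phihat$ preserves the ordering $\prec$. One small slip: when you say ``$M_w(n_\b)$ terminates at row $\pi\i(b) < r$'', the bottom row of $M_w(n_{a,b})$ is $\pi\i(a)$, not $\pi\i(b)$; the relevant inequality is $\pi\i(a) < r$, which is exactly the hypothesis that $n_\b$ sits above the bottom row.
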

\begin{proof}
If $n_\a$ is in the bottom row, then $n_\a$ does not appear in the rational function $u_\b=R(n_\b)$. It follows that $n_\a\f{\partial }{\partial n_\a} u_\b=0$, and the lemma follows trivially. Now, assume that both $n_\b$ and $n_\a$ are above the bottom row. As before in Section~\ref{sec:parts23case1}, let $\what$ be the matrix obtained from $w$ by removing the bottom row and the $\pi(r)$-th column. Recall the definition \eqref{phihat} of the map $\phihat$. For a free variable $n_{a,b}=n_\g\in V_w$ above the bottom row of $wn$, we understand $\phihat(\g)$ as $\bigl(\phihat(a),\phihat(b)\bigr)\in \Inv(\pihat\i)$. Since $n_\a$ and $n_\b$ are both above the bottom row of $wn$, it follows from Lemma~\ref{indiceslem} that $\phihat (n_\a)$ and $\phihat(n_\b)$ are free variables in $V_{\what}$. By the induction hypothesis, the rational function
$\phihat(n_\a) \f{\partial }{\partial (\phihat(n_\a))} R^{(\what)}\bigl(\phihat(n_\b)\bigr)$
is of the form $\sum\limits_{i} c_i\bigl(\phihat(\a)\bigr)f_i$, with
\[c_i\bigl(\phihat(\a)\bigr)\in \Z\left[\{n_\d,n_\d\i\mid \d\in \Inv(\pihat\i),\, \d\prec\phihat(\a)\}\right] \quad \text{and}\quad f_i\in \Z\left[\{u_\g \mid n_\g \in V_{\what}\} \right].\]
Since the map $\phihat$ simply changes the indices, it follows from Lemma~\ref{indiceslem} that we have
\[
     n_\a  \f{\partial}{\partial n_\a} R^{(w)}(n_\b)=\phihat\i\Bigl(\phihat(n_\a) \f{\partial R^{(\what)}\bigl(\phihat(n_\b)\bigr)}{\partial \bigl(\phihat(n_\a)\bigr)} \Bigr).
\]
Therefore, to prove the lemma, it suffices to show that the rational function
\begin{equation}\label{phihatinvpartial}
   \phihat\i\Bigl(\sum\limits_{i} c_i\bigl(\phihat(\a)\bigr)f_i\Bigr)=\sum\limits_{i} \phihat\i\left(c_i\bigl(\phihat(\a)\bigr)\right)\phihat\i(f_i)
\end{equation}
has the form \eqref{ordform}. By Lemma~\ref{indiceslem}, we have $ \phihat\i(f_i)\in \Z\bigl[\{u_\g \mid n_\g \in V_w\}\bigr]$. Recall the ordering \eqref{ordering}. With this ordering, it is straightforward to verify that if $n_{\g_1},n_{\g_2}\in V_w$ are above the bottom row of $wn$, then $\phihat(n_{\g_1})\succ \phihat(n_{\g_2})$ in the ordering of $V_{\what}$ if and only if $n_{\g_1} \succ n_{\g_2}$ in the ordering of $V_w$. We conclude that \eqref{phihatinvpartial} has the form \eqref{ordform}.
\end{proof}

\begin{lem}\label{lem:p6rr}
Suppose that the bottom right corner entry $n_{\pi(r),r}$ of $wn$ is a free variable. Then for any $n_\a\in V_w$, $n_\a \frac{\partial}{\partial n_\a}u_{\pi(r),r}$ has the form \eqref{ordform}.
\end{lem}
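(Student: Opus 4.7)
The plan is to apply Lemma~\ref{lem:Rrm} with $a=r$, which---under the hypothesis that $n_{\pi(r),r}\in V_w$---yields the closed form
\[
u_{\pi(r),r}=\prod_{\pi\i(r)<j\leq r} n_{\pi(j),r}.
\]
This reduces the statement to computing a partial derivative of a squarefree monomial in variables that all lie in the rightmost column of $wn$ strictly below the entry $1_r$. Note that each factor $n_{\pi(j),r}$ with $\pi\i(r)<j\leq r$ is itself a free variable, since it sits in row $j$ (below $1_r$, which is in row $\pi\i(r)$) of the rightmost column.

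I will then split into two cases based on whether $n_\a$ is among the factors of this monomial. If $n_\a$ is not of the form $n_{\pi(k),r}$ with $\pi\i(r)<k\leq r$, then $n_\a$ does not appear in $u_{\pi(r),r}$, so the partial derivative vanishes and the conclusion holds trivially (as the empty sum is of the form \eqref{ordform}). Otherwise $n_\a=n_{\pi(k),r}$ for some such $k$, and since the monomial is linear in $n_\a$, the operator $n_\a\frac{\partial}{\partial n_\a}$ returns the monomial itself:
\[
n_\a\frac{\partial}{\partial n_\a}u_{\pi(r),r}=u_{\pi(r),r}.
\]
I will present this as the single term $c_1(\a)f_1$ with $c_1(\a)=1$ and $f_1=u_{\pi(r),r}$, which lies in $\Z\bigl[\{u_\g\mid n_\g\in V_w\}\bigr]$ because $n_{\pi(r),r}\in V_w$ by assumption.

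I do not anticipate a serious obstacle. The content of the lemma is essentially packaged inside Lemma~\ref{lem:Rrm}: once the explicit monomial expression is available, the derivative is immediate, and the resulting expression involves no $n_\d$ factors at all, so the ordering constraint $\d\prec\a$ in the definition of \eqref{ordform} is vacuously satisfied. The only mild check is to record that $c_1(\a)=1\in\Z\subset\Z\bigl[\{n_\d,n_\d\i\mid \d\prec\a\}\bigr]$, which is automatic.
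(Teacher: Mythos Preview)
Your proposal is correct and follows essentially the same approach as the paper: both invoke Lemma~\ref{lem:Rrm} to express $u_{\pi(r),r}$ as the monomial $\prod_{\pi\i(r)<j\leq r} n_{\pi(j),r}$, observe that $n_\a\frac{\partial}{\partial n_\a}$ returns either $u_{\pi(r),r}$ or $0$ depending on whether $n_\a$ lies in the rightmost column, and conclude that the result is of the form \eqref{ordform}.
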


\begin{proof}
It follows immediately from Lemma~\ref{lem:Rrm} that $n_\a \frac{\partial}{\partial n_\a}u_{\pi(r),r}$ equals $u_{\pi(r),r}$ if $n_\a$ is in the rightmost column of $wn$ and zero otherwise. The lemma follows.
\end{proof}

\begin{lem}\label{lem:p6naorigin}
Let $n_{\b}$ be a free variable in the bottom row and $n_\b\neq n_{\pi(r),r}$. If $n_\a\in V_w$ is an element of the set $O(n_\b)$ of origins of $n_\beta$, then $n_\a\f{\partial}{\partial n_\a} u_\b$ has the form \eqref{ordform}.
\end{lem}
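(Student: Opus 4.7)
The plan is to exploit the explicit formula \eqref{defR},
\[
R(n_\b)=(-1)^{t_\b}\frac{P(n_\b)}{\prod_{1_\mu\in D(n_\b)}\rho(1_\mu)},
\]
and track precisely where the variable $n_\a$ occurs in the numerator and the denominator. Since $n_\a\in O(n_\b)$ is itself a free variable, every set $\mathbf{p}\in\mathcal{P}(n_\b)$ of disjoint paths contains a unique path starting at $n_\a$; by disjointness, this is the only path of $\mathbf{p}$ that traverses $n_\a$, so $n_\a$ appears to degree exactly one in $u(\mathbf{p})$. Summing over $\mathbf{p}$ gives $P(n_\b)=n_\a\cdot Q$ with $Q$ a polynomial independent of $n_\a$.

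Next I would analyze the denominator. Writing $n_\a=n_{a_\a,b_\a}$, the factor $\rho(1_\mu)$ is a product over nonzero entries in row $\pi\i(\mu)$ of $wn$, so $n_\a$ can appear in $\rho(1_\mu)$ only when $\mu=a_\a$, and then only to degree one. Whether $\rho(1_{a_\a})$ actually shows up in the product $\prod_{1_\mu\in D(n_\b)}\rho(1_\mu)$ reduces to the question of whether $1_{a_\a}\in D(n_\b)$. Here the hypothesis on $n_\b$ is crucial: writing $n_\b=n_{\pi(r),j}$, the condition $n_\b\neq n_{\pi(r),r}$ together with $n_\b\in V_w$ forces $\pi(r)<j<r$. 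The bottom row $r$ of $wn$ contains only one entry of $1$, namely $1_{\pi(r)}$, which lies in column $\pi(r)<j$ and therefore outside $M_w(n_\b)$. Hence no destination of $n_\b$ lies in the bottom row of $wn$.

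I then split into two cases. If $n_\a$ is a row origin, then by Definition~\ref{Odef} its row $\pi\i(a_\a)$ contains a destination of $n_\b$, which must be $1_{a_\a}$, so $\rho(1_{a_\a})$ is a factor in the denominator and is divisible by $n_\a$. Combined with $P(n_\b)=n_\a Q$, the powers of $n_\a$ in numerator and denominator cancel, $R(n_\b)$ is independent of $n_\a$, and consequently $n_\a\,\partial u_\b/\partial n_\a=0$. If instead $n_\a$ is the bottom origin, then $n_\a$ sits in the bottom row of $wn$, which by the previous paragraph contains no destination of $n_\b$; hence $n_\a$ does not appear in the denominator at all, $R(n_\b)$ is homogeneous of degree one in $n_\a$, and $n_\a\,\partial u_\b/\partial n_\a=R(n_\b)=u_\b$.

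In either case the output is $0$ or $u_\b$, both of which trivially have the form \eqref{ordform}: one takes $c(\a)=0$ or $c(\a)=1$ (involving no $n_\d$) and $f=1$ or $f=u_\b\in\Z[\{u_\g\}]$. The argument is essentially bookkeeping about how $n_\a$ is positioned in $wn$, and the only substantive point is the identification of which rows of $M_w(n_\b)$ contain destinations; the absence of a destination in the bottom row is precisely where the hypothesis $n_\b\neq n_{\pi(r),r}$ is used.
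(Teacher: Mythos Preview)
Your proof is correct and follows essentially the same approach as the paper: you split into the bottom-origin case (where $n_\a=n_{\pi(r),r}$ divides the numerator but not the denominator, giving $n_\a\partial_{n_\a}u_\b=u_\b$) and the non-bottom-origin case (where $n_\a$ cancels between numerator and denominator, giving $0$). Your argument is slightly more explicit than the paper's in verifying that $1_{\pi(r)}$ lies outside $M_w(n_\b)$, but the logic is identical.
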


\begin{proof}
Write $n_\b=n_{\pi(r),j}$ with $j<r$. By \eqref{defR}, we have
     \[
u_\b=(-1)^{t_{\pi(r),j}}\frac{P(\nprj)}{\prod\limits_{1_\mu\in D(\nprj)}\rho(1_\mu)},\quad\text{where} \quad t_{\pi(r),j}=\abs{D(\nprj)}-1.\]
If $n_\a$ is an origin of $\nprj$, then $n_\a$ is in every set of disjoint paths in ${\mathcal P}(\nprj)$, hence $n_\a$ is in every term of the polynomial $P(\nprj)$. If $n_\a$ is the bottom origin $n_{\pi(r),r}$ of $\nprj$, then $n_\a$ is below every destination of $\nprj$, hence $n_\a$ is not in the denominator. It follows that $n_\a\f{\partial }{\partial n_\a} u_{\b}$ equals $u_{\b}$,
which is of the form \eqref{ordform}. On the other hand, if $n_\a\neq n_{\pi(r),r}$, then by Definition~\ref{Odef}, $n_\a$ is in the denominator of $R(\nprj)$. In this case, $n_\a$ is canceled out and does not appear in $u_\b$, thus $n_\a\f{\partial }{\partial n_\a} u_\b=0$. The lemma follows.
\end{proof}
It remains to consider the case where $n_\b$ is in the bottom row and $n_\a\notin O(n_\b)$.
\begin{lem}\label{lem:p6RL}
Let $n_{\b}=\nprj\in V_w$ with $j<r$. Suppose that $n_\a\in V_w$ and $n_\a \notin O(n_\b)$. Then $n_\a \frac{\partial}{\partial n_\a} R_L(n_\b)$ has the form \eqref{ordform}.
\end{lem}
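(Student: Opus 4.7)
The plan is to apply Corollary~\ref{lem:RLndecomp} with $d=r$ to write
\begin{equation*}
R_L(n_\b)=\sum_{\mathcal V\subseteq V_w(\pi\i(j),r,j)} c(\mathcal V)\,f_{\mathcal V},
\end{equation*}
with $c(\mathcal V)\in\Z[\{n_{\pi(k),r}\in V_w\mid \pi\i(j)<k\leq r\}]$ and $f_{\mathcal V}\in\Z[\{u_{x,y}\mid n_{x,y}\in\mathcal V\}]$, and then differentiate termwise using the product and chain rules:
\begin{equation*}
n_\a\frac{\partial R_L(n_\b)}{\partial n_\a}
=\sum_{\mathcal V}\Bigl(n_\a\frac{\partial c(\mathcal V)}{\partial n_\a}\Bigr)f_{\mathcal V}
+\sum_{\mathcal V}c(\mathcal V)\sum_{n_\g\in\mathcal V}\frac{\partial f_{\mathcal V}}{\partial u_\g}\cdot n_\a\frac{\partial u_\g}{\partial n_\a}.
\end{equation*}

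I would split into two cases depending on whether $n_\a$ lies in the rightmost column of $wn$. If $n_\a=n_{x,y}$ with $y<r$, then every variable in $c(\mathcal V)$ lies in column $r$, hence is $\prec\a$ in the ordering \eqref{ordering}, so $c(\mathcal V)\in\Z[\{n_\d:\d\prec\a\}]$. The first sum vanishes because $\partial c(\mathcal V)/\partial n_\a=0$. In the chain-rule sum, each factor $n_\a\partial u_\g/\partial n_\a$ is brought into form \eqref{ordform} by Lemma~\ref{lem:p6abovebottom} when $n_\g$ lies above the bottom row, by Lemma~\ref{lem:p6rr} when $n_\g=n_{\pi(r),r}$, by Lemma~\ref{lem:p6naorigin} when $n_\g=n_{\pi(r),y'}$ lies in the bottom row with $j\leq y'<r$ and $n_\a\in O(n_\g)$, and via a simultaneous descending induction on $y'$ (together with the subsequent lemmas that treat $R_1(n_\g)$ and $R_2(n_\g)$ under the non-origin hypothesis) when $n_\a\notin O(n_\g)$. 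Multiplication by $c(\mathcal V)\,\partial f_{\mathcal V}/\partial u_\g$, a polynomial in $\{n_{\pi(k),r}\}\cup\{u_\g\}$, then preserves the form \eqref{ordform}.

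The harder case is $n_\a=n_{\pi(k),r}$ in the rightmost column. The non-origin hypothesis rules out $k=r$ (otherwise $n_\a$ is the bottom origin) and forces $\pi(k)<j$, with $\max(\pi\i(j),\pi\i(r))<k<r$. In this case both sums can a priori involve rightmost-column variables $n_{\pi(k'),r}$ with $k'<k$, which do \emph{not} satisfy $\delta\prec\a$. The plan is to retrace the recursive construction of $c(\mathcal V)$ through Lemma~\ref{lem:RLODdecompnew} and Lemma~5.15 of \cite{kim2024pt1}, extracting a refined decomposition in which the non-origin variable $n_\a$ appears in each term only paired with $n_{\pi(k'),r}$ drawn from rows strictly below row $k$ (equivalently, $k'>k$). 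The cancellations that allow this refinement should be forced precisely by the condition $\pi(k)<j$, which prevents row $k$ from hosting any destination of $n_\b$ other than $1_j$ in $M_w(n_\b)$. Establishing this combinatorial bookkeeping in full generality is the main technical obstacle, and is where the distinction between origin and non-origin rightmost-column variables—absent from the bare statement of Corollary~\ref{lem:RLndecomp}—must be made explicit.
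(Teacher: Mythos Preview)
Your approach has a genuine circularity in the case $n_\a$ lies \emph{outside} the rightmost column. The decomposition from Corollary~\ref{lem:RLndecomp} (with $d=r$) is built via the identity $R_L(n_{\pi(r),j})=u_{\pi(r),j}-n_{\pi(r),r}\,u_{\pi(r-1),j}-R_1(n_{\pi(r),j})$; in particular $u_\b$ itself appears among the $u_\g$'s, since $n_\b\in V_w(\pi\i(j),r,j)$. Your chain-rule expansion therefore produces the term $n_\a\,\partial u_\b/\partial n_\a$, which is precisely the quantity whose form you are in the middle of establishing at column index $j$; the ``simultaneous descending induction on $y'$'' cannot be invoked when $y'=j$. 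The paper avoids this by an entirely different device: it uses Lemma~\ref{RLwRwtilde} to write $R_L^{(w)}(n_\b)=s(\b)\,R^{(\wtilde)}(n_\b)$ with $s(\b)$ a monomial in rightmost-column variables, and then appeals to the induction hypothesis on the \emph{rank} $r$ (not on the column index) applied to $\wtilde$.

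Conversely, you have the difficulty of the two cases inverted. The rightmost-column case, which you flag as ``the main technical obstacle'', is in fact direct. If $n_\a=n_{\pi(k),r}$ with $n_\a\notin O(n_\b)$, then by the very definition of $\mathcal P_L$ no path in $\mathcal P_L(n_\b)$ touches a rightmost-column entry other than its own origin, so $n_\a$ does not appear in $P_L(n_\b)$; and $n_\a$ appears in the denominator $\prod_{1_\mu\in D(n_\b)}\rho(1_\mu)$ only if $\pi(k)=j$, since otherwise $1_{\pi(k)}\in D(n_\b)\smallsetminus\{1_j\}$ would force $n_\a=\gamma(1_{\pi(k)})\in O(n_\b)$. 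Hence $n_\a\,\partial R_L(n_\b)/\partial n_\a$ equals either $0$ or $-R_L(n_\b)$, and in the latter case $n_\a=n_{j,r}$ dominates every $n_{\pi(k'),r}$ with $k'>\pi\i(j)$ in the ordering $\succ$, so Corollary~\ref{lem:RLndecomp} immediately gives form~\eqref{ordform}. No ``refined decomposition'' or tracing through Lemma~5.15 of \cite{kim2024pt1} is needed.
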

\begin{proof}
By Definition~\ref{def:RLR1R2}, we have
\[
R_L(n_\b)=(-1)^{t_{\pi(r),j}}\frac{P_L(\nprj)}{\prod\limits_{1_\mu\in D(\nprj)}\rho(1_\mu)},\quad\text{where} \quad t_{\pi(r),j}=\abs{D(\nprj)}-1.\]
First, consider the case that $n_\a$ is in the rightmost column of $wn$. It follows from Definitions~\ref{calpleft} and~\ref{def:calpleftn} that $n_\a$ does not appear in the polynomial $P_L(\nprj)$. Also, $n_\a$ appears in the denominator of $R_L(\nprj)$ only if $n_\a$ is the rightmost nonzero entry of the top destination $1_j$ of $n_{\pi(r),j}$, that is, $n_\a=n_{j,r}\in V_w$. Otherwise, $n_\a$ does not appear in the rational function $R_L(n_\b)$. It follows that
\begin{equation}\label{eq:p6RLrm}
n_\a \frac{\partial}{\partial n_\a}R_L(n_\b)=\begin{cases}
    -R_L(n_\b) & \text{if } n_\a=n_{j,r} \\
    0 & \text{otherwise.}
\end{cases}\end{equation}
By Corollary~\ref{lem:RLndecomp},
$R_L(n_\b)$ has the form $\sum\limits_{{\mathcal V}\subseteq V_w(\pi\i(j),r,j)} c({\mathcal V})f_{\mathcal V}$, where
\[ f_{\mathcal V}\in \Z\bigl[\bigl\{u_{x,y} \mid n_{x,y}\in {\mathcal V}\bigr\}\bigr] \eqand
    c({\mathcal V})\in \Z\bigl[\bigl\{n_{\pi(k),r}\in V_w\mid \pi\i(j)<k\leq r\bigr\}\bigr].
\]
If $n_\a=n_{j,r}$, then $n_\a\succ n_{\pi(k),r}$ for any $\pi\i(j)<k\leq r$, thus $ c({\mathcal V})\in \Z\left[\{n_\d,n_\d\i\mid \d\prec\a\}\right]$. We conclude that $n_\a\f{\partial }{\partial n_\a} R_L(n_\b)$ has the form \eqref{ordform}. \par
Now, assume that $n_\a$ is not in the rightmost column of $wn$. As before in Section~\ref{sec:parts23case2}, let $\wtilde$ be the matrix obtained from $w$ by removing the rightmost column and the $\pi\i(r)$-th row. Recall the definition \eqref{phitilde} $\phitilde$ and Definition~\ref{def:wtildenu} of $\wtilde\ntilde$ and $\wtilde\utilde$. By Lemma~\ref{wntildewtilden}, $n_\a$ and $n_\b$ are elements of $V_{\wtilde}$. By Lemma~\ref{RLwRwtilde}, we have
\[
R^{(w)}_L(n_\b)=\begin{cases}
-n_{\pi(r),r}\cdot R^{(\widetilde{w})}(n_\b) & \text{if} \quad \pi\i(j)<\pi\i(r), \\
\frac{n_{\pi(r),r}}{n_{j,r}}\cdot R^{(\widetilde{w})}(n_\b) & \text{if} \quad \pi\i(j)>\pi\i(r).
\end{cases}
\]
For convenience, write $R^{(w)}_L(n_\b)=s(\b) R^{(\widetilde{w})}(n_\b)$, where $s(\b)$ is either $-n_{\pi(r),r}$ or $\frac{n_{\pi(r),r}}{n_{j,r}}$.
Since $n_\a$ is not in the rightmost column, we have 
$n_\a\f{\partial }{\partial n_\a} R_L^{(w)}(n_{\b})=s(\b) n_\a\f{\partial }{\partial n_\a} R^{(\widetilde{w})}(n_{\b})$. Regarding $n_\a$ as a free variable in $V_{\wtilde}$, we apply the induction hypothesis for part (v) of Theorem~\ref{biratlthm} and write
\[
n_\a\f{\partial }{\partial n_\a} R_L^{(w)}(n_{\b})=s(\b) n_\a\f{\partial }{\partial n_\a} R^{(\widetilde{w})}(n_{\b})=s(\b)\Bigl( \sum\limits_{i} c_i(\a)f_i\Bigr),\]
where $ c_i(\a)\in \Z[\{n_\d,n_\d\i\mid \d\in \Inv(\pitilde\i),\ \d\prec\a\}]$ and $f_i\in \Z[\{u_\g \mid n_\g \in V_{\wtilde}\} ]$. Recall the ordering \eqref{ordering}. Check that if $n_{\g_1},n_{\g_2}\in V_w$ are not in the rightmost column of $wn$, then $n_{\g_1}\succ n_{\g_2}$ in the ordering of $V_w$ if and only if $n_{\g_1}\succ n_{\g_2}$ in the ordering of $V_{\wtilde}$. Therefore, by Lemma~\ref{wntildewtilden}, we deduce
$c_i(\alpha)\in \Z[\{n_\d,n_\d\i \mid \d\in \Inv(\pi\i),\ \d\prec\a \}]$.
Also, since $n_\a \succ n_\g$ for any $n_\g\in V_w$ in the rightmost column, we have $s(\b) \in \Z\bigl[\{n_\d,n_\d\i\mid \d\in \Inv(\pi\i),\ \d\prec\a\}\bigr]$. Finally, by Lemma~\ref{wntildewtilden}, we have
$f_i\in \Z\bigl[\{u_\g \mid n_\g \in V_{w}\} \bigr]$.
We conclude that $n_\a\f{\partial }{\partial n_\a} R_L(n_\b)$ is of the form \eqref{ordform}. This finishes the proof.
\end{proof}
We are ready to finish the proof of part (v) of Theorem~\ref{biratlthm}.
\begin{proof}[Proof of part (v) of Theorem~\ref{biratlthm}]
By Lemma~\ref{lem:p6abovebottom} and~\ref{lem:p6rr}, it suffices to consider the case $n_\b=n_{\pi(r),j}$ with $j<r$. With Lemma~\ref{lem:p6rr} being the base case, we use descending induction on the column index $j$, assuming that part (v) holds for any $n_{\b'}=n_{\pi(r),j'}$ with $j<j'\leq r$. By Lemma~\ref{lem:p6naorigin}, we may assume $n_\a\notin O(\nprj)$.
By Lemma 4.1 of \cite{kim2024pt1}, we have
\[
    R(\nprj)=n_{\pi(r),r}\cdot u_{\pi(r-1),j}+R_L(\nprj)+R_1(\nprj).\]
Observe that any product or any $\Z$-linear combination of rational functions of the form \eqref{ordform} is again of the form \eqref{ordform}, thus we may consider the three terms separately. Since $n_{\pi(r),r}$ is the bottom origin of $\nprj$, we have $n_\a\neq n_{\pi(r),r}$, consequently $n_\a\succ n_{\pi(r),r}$. From this and Lemma~\ref{lem:p6abovebottom}, we deduce that the function $n_{\pi(r),r}\cdot u_{\pi(r-1),j}$ after applying the differential operator $n_\a \frac{\partial}{\partial n_\a}$ has the form \eqref{ordform}. Also, by Lemma~\ref{lem:p6RL}, $n_\a \frac{\partial}{\partial n_\a}R_L(\nprj)$ has the form \eqref{ordform}. It remains to show that $n_\a \frac{\partial}{\partial n_\a}R_1(\nprj)$ has the form \eqref{ordform}. Write
$O_1(\nprj)=\{n_{\pi(\d_1),r},\ldots,n_{\pi(\d_s),r}\}$.
By Lemma~\ref{lem:R1ndecomp}, it suffices to show that, for each $1\leq m\leq s$,
\begin{equation}\label{R1terms}
   n_\a \frac{\partial}{\partial n_\a} \Bigl[ n_{\pi(\delta_m),r}\cdot u_{\pi(\d_m-1),j} \cdot \frac{P_L(O_w(\d_m,r,j+1)\to D_w(\d_m,r,j+1))}{\prod\limits_{1_\mu\in D_w(\d_m,r,j+1)}\rho(1_\mu)}\Bigr]
\end{equation}
has the form \eqref{ordform}. Observe that we have $n_{\a}\neq n_{\pi(\d_m),r}$ because $n_\a \notin O(\nprj)$.
\par 
First, consider the case $n_{\pi(\d_m),r}\succ n_\a$. This implies that $n_\a$ is in the rightmost column and below the $\d_m$-th row of $wn$, hence does not appear in the rational function $u_{\pi(\d_m-1),j}$. Also, by Lemma~\ref{lem:nxyabjform} we have $O_w(\d_m,r,j+1)\subseteq O(\nprj)$ and $D_w(\d_m,r,j+1)\subseteq D(\nprj)$. By the same reasoning used for \eqref{eq:p6RLrm}, we deduce that $n_\a$ does not appear in the rational function 
\begin{equation}\label{R1fraction}
\frac{P_L(O_w(\d_m,r,j+1)\to D_w(\d_m,r,j+1))}{\prod\limits_{1_\mu\in D_w(\d_m,r,j+1)}\rho(1_\mu)}.\end{equation}
We conclude that \eqref{R1terms} equals $0$.
\par
It remains to consider the case $n_\a\succ n_{\pi(\d_m),r}$. By Leibniz's rule, to prove that \eqref{R1terms} is of the form \eqref{ordform}, it suffices to show that $n_{\pi(\delta_m),r} u_{\pi(\d_m-1),j}$, $n_\a \frac{\partial}{\partial n_\a}\left(n_{\pi(\delta_m),r}\cdot u_{\pi(\d_m-1),j}\right)$, the rational function \eqref{R1fraction}, and \eqref{R1fraction} after applying $n_\a \frac{\partial}{\partial n_{\a}}$ are of the form \eqref{ordform}. It follows directly from the assumption $n_\a\succ n_{\pi(\d_m),r}$ and Lemma~\ref{lem:p6abovebottom} that the first two functions are of the form \eqref{ordform}. Also, by Lemma~\ref{lem:RLODdecompnew}, the function \eqref{R1fraction} has the form \[\sum\limits_{{\mathcal V}\subseteq V_w(\d_m,r,j+1)} c({\mathcal V})f_{\mathcal V},\] where $f_{\mathcal V}\in \Z\bigl[\{u_{x,y} \mid n_{x,y}\in {\mathcal V}\}\bigr]$ and $c({\mathcal V})\in \Z\bigl[\{n_{\pi(x),r}\in V_w \mid \d_m<x\leq r\}\bigr]$. It follows from the assumption $n_\a\succ n_{\pi(\d_m),r}$ that $n_\a\succ n_{\pi(x),r}$ whenever $\d_m<x\leq r$. Therefore, \eqref{R1fraction} is of the form \eqref{ordform}. Finally, to show that \eqref{R1fraction} after applying $n_\a \frac{\partial}{\partial n_\a}$ is of the form \eqref{ordform}, it suffices to show that for any ${\mathcal V}\subseteq V_w(\d_m,r,j+1)$ the function $n_\a \frac{\partial}{\partial n_\a} \bigl(c({\mathcal V})f_{\mathcal V}\bigr)$ is of the form \eqref{ordform}. Since $n_\a \succ n_{\pi(x),r}$ for any $\d_m<x\leq r$, we have $c({\mathcal V}) \in \Z[\{n_\d,n_\d\i\mid \d\prec\a\}]$ and
\[n_\a \frac{\partial}{\partial n_\a} \left(c({\mathcal V})f_{\mathcal V}\right)=c({\mathcal V}) n_\a \frac{\partial}{\partial n_\a} f_{\mathcal V}.\]
Also, by Definition~\ref{def:Vwabj}, every free variable $n_{x,y}$ in the set $V_w(\d_m,r,j+1)$ is to the right of the $j$-th column. It follows from Lemma~\ref{lem:p6abovebottom} and the induction hypothesis on the column index that $n_\a \frac{\partial}{\partial n_\a} f_{\mathcal V}$ has the form \eqref{ordform}. This finishes the proof.
\end{proof}

\section{Proof of Lemma~\ref{lem:intdom}}\label{sec:intdom}
The last missing piece for the analytic continuation of Jacquet integrals is Lemma~\ref{lem:intdom}. Let the ordering $\sqsupset$ be as defined in \eqref{ordering2}, and write
\[V_w=\{n_{\b_1},n_{\b_2},\ldots, n_{\b_d}\}, \quad n_{\beta_1}\sqsupset n_{\beta_2}\sqsupset \cdots \sqsupset n_{\beta_d}.\]
According to this ordering, $n_{\b_d}$ is in the rightmost column that contains a free variable, and is in the highest row among the free variables in that column (in the example \eqref{ex:w3}, $n_{4,5}$ is the last element in the ordering). Such $n_{\b_d}$ is right below the entry of $1$ in that column, hence $n_{\b_d}=u_{\b_d}$. Recall from Proposition~\ref{lem:Jacobian} that $\partial u_{\b_i}/\partial n_{\b_j}=0$ whenever $i>j$. Hence $u_{\b_i}$ is a rational function of $n_{\b_i}, n_{\b_{i+1}}\ldots, n_{\b_d}$. For each $j$ with $1\leq j\leq d$, let $t_j= \abs{D(n_{\b_j})}-1$,
\[f_j=(-1)^{t_j}\Bigl(u_{\b_j}-\frac{\partial u_{\b_j}}{\partial n_{\b_j}}n_{\b_j}\Bigr),\]
and
\[h_j=(-1)^{t_j}\frac{M+f_j}{\partial u_{\b_j}/\partial n_{\b_j}}.\]
The sign factor $(-1)^{t_j}$ ensures that $f_j(\abs{n_{\b_{j+1}}},\ldots \abs{n_{\b_d}})$ and $h_j(\abs{n_{\b_{j+1}}},\ldots \abs{n_{\b_d}})$ are both nonnegative. Check that $f_d\equiv 0$ and $h_j\equiv M$, hence $h_j$ satisfies part (i) of the lemma. Next, recall from the proof of Lemma~\ref{lem:part4partial} that
\[\frac{\partial u_{\b_j}}{\partial n_{\b_j}}=(-1)^{t_j}\frac{1}{n_{\b_j}}\frac{u(\mathbf{p})}{\prod\limits_{1_\mu\in D(n_{\b_j})}\rho(1_\mu)},\]
where $\mathbf{p}$ is the unique set of disjoint paths containing $n_{\b_j}$. From this, it is straightforward to check that $h_j$ satisfies part (ii) of the lemma. Finally, for part (iii), observe that
\[u_{\b_j}=\frac{\partial u_{\b_j}}{\partial n_{\b_j}}n_{\b_j}+(-1)^{t_j}f_j.\]
We deduce that $n_{\b_j}\in I_j$ whenever $u_{\b_j}\in [-M,M]$. This finishes the proof.

\bibliographystyle{plain}
\bibliography{WhitDistref}
\end{document}